\documentclass[lettersize,journal]{IEEEtran}

\usepackage{mathtools}
\usepackage{amsmath,amsfonts}
\usepackage{algorithmic}
\usepackage{algorithm}
\usepackage{array}
\usepackage[caption=false,font=normalsize,labelfont=sf,textfont=sf]{subfig}
\usepackage{textcomp}
\usepackage{stfloats}
\usepackage{url}
\usepackage{verbatim}
\usepackage{graphicx}
\usepackage{cite}
\usepackage[latin1]{inputenc}
\usepackage{amssymb,amsmath}
\usepackage{graphics}
\usepackage{float}
\usepackage{color}
\usepackage{dsfont}
\usepackage{bm}
\usepackage{upgreek}
\usepackage{arydshln}
\usepackage{enumerate}
\usepackage{amsthm}
\usepackage{graphicx}
\usepackage{threeparttable}
\usepackage{caption}
\usepackage{multirow}
\usepackage{color}
\usepackage{xfrac}
\usepackage{array}
\usepackage{nomencl}
\usepackage{hyphenat}
\usepackage{algorithm,algorithmic}
\usepackage{url}
\usepackage[font=small]{caption}
\usepackage{hyphenat}
\usepackage{setspace}
\usepackage{cancel}
\usepackage[dvipsnames]{xcolor}
\usepackage{epsfig}   

\usepackage{xr}
\def\QED{\mbox{\rule[0pt]{1.4ex}{1.4ex}}} % ajout de JP
\usepackage{soul}  % ajout de JP

\newcommand\tra{{\rm Tr}}

\newcommand\cov{{\rm Cov}}
\newcommand\var{{\rm Var}}
\newcommand\ve{{\rm vec}}

\newcommand\e{{\rm E}}
\newcommand\pardef{ \stackrel{{\rm def}}{=} }

\usepackage{chngcntr}
\usepackage{apptools}
\usepackage{enumitem}

\newtheorem{lemma}{Lemma}
\newtheorem{result}{Result}
\newtheorem{definition}{Definition}

\begin{document}

\title{On the parametric and semiparametric Fisher information matrix for non-zero mean stationary spherical  invariant random processes}
\author{Jean-Pierre Delmas, Habti Abeida, and Stefano Fortunati
\thanks{
Jean-Pierre Delmas and Stefano Fortunati are with Samovar laboratory, Telecom SudParis, Institut Polytechnique de Paris, 91120 Palaiseau, France,
e-mail: jean-pierre.delmas@it-sudparis.eu, stefano.fortunati@telecom-sudparis.eu.
Habti Abeida is with Taif University, College of Engineering, Dept. of  Electrical Engineering, Al-Haweiah, 21974, Saudi Arabia,
email: h.abeida@tu.edu.sa,
}}
\maketitle

%%%%%%%%%%%%%%%%%%%%%%%%%%%%%%%%%%%%%%%%%%%%%%%%ù
%\setcounter{equation}{102}
%%%%%%%%%%%%%%%%%%%%%%%%%%%%%%%%%%%%%
%%%%%%%%%%%%%%%%%%%%%%%%%%%%%%%%%%%%%%%%%%%%%%%%%%%%%%%%%%%%%%%%%%%%
%%%%%%%%%%%%%%%%%%%%%%%%%%%%%%%%%%%%%%%%%%%%%%%%%%%%%%%%%%%%%%%%%\`{u}\`{u}
%
\begin{abstract}
The classical Whittle formula provides a closed-form expression for the asymptotic Fisher information matrix (FIM) rate of multidimensional, real-valued, zero-mean, purely nondeterministic stationary Gaussian processes (GPs), expressed in terms of their parameterized spectra within a maximum-likelihood framework. However, the Gaussian assumption underlying this result restricts its applicability to many real-world signals exhibiting heavy-tailed or non-Gaussian behavior.
In this paper, we extend Whittle's result to multidimensional, real-valued stationary compound Gaussian processes (CGPs) with arbitrary mean, under a unified framework encompassing fully known, parameterized, and completely unknown density generators. Building upon the Slepian-Bangs formula for $n$ consecutive observations and extending it to the semiparametric setting, we leverage the asymptotic properties of block Toeplitz matrices to obtain a closed-form spectral-domain expression for the asymptotic FIM rate.
The resulting expression, common to all density generator families, generalizes the standard zero-mean Gaussian formula by incorporating a distribution-dependent contribution associated with the nonzero mean, and an additional covariance term that is invariant to the choice of non-Gaussian distribution. This extension enables efficient and theoretically grounded performance analysis for heavy-tailed signal processing applications.
\end{abstract}
%%%%%%%%%%%%%%%%%%%%%%%%%%%%%%%%%%%%%%%%%%%%%%%%%%%%%%%%%%%%%%%%%%%%%%%%%%%%%%

\begin{IEEEkeywords}
Fisher information matrix, Whittle's formula, Slepian-Bangs's formula, Cram\'er-Rao bound, compound-Gaussian process, spherical  invariant random processes, elliptical symmetric distributions, stationarity.
\end{IEEEkeywords}

\centerline{
Submitted to IEEE Trans. SP
}

\date{\today}

\section{Introduction}
\label{sec:Introduction}
%%%%%%%%%%%%%%%%%%%%%%%%%%%%%%%%%%%%%%%%%%%%%%%%%%
%
Gaussian processes (GPs) are often inadequate for capturing the complexities of real-world data, particularly when it exhibits heavy-tailed characteristics.
For instance, they frequently fail to accurately characterize radar clutter, noise and interference in indoor and outdoor mobile communication channels, as well as noise encountered in imaging applications. 
To overcome these shortcomings, compound Gaussian processes (CGPs), referred to in the engineering literature as spherically invariant random processes (SIRPs) \cite{Yao2003}, provide a flexible and effective modeling framework for such phenomena and have consequently found numerous engineering applications. 
This class of processes encompasses a broad range of stochastic models, including, among others, Student-$t$ processes, $K$-distributed processes, 
$\epsilon$-contaminated Gaussian processes, as well as specific instances of symmetric $\alpha$-stable processes, thereby providing a unified framework for robust stochastic modeling under non-Gaussian conditions.
Beyond the independent and identically distributed setting, one of the most fundamental and practically relevant extensions is the second-order (wide-sense) stationary parametric model. In this framework, the dependence structure is characterized by a finite-dimensional set of parameters governing the first- and second-order statistics, typically the mean and/or the spectrum, which must be estimated from $n$ consecutive observations. 
%This class encompasses widely used models such as autoregressive (AR), autoregressive moving-average (ARMA), and other linear stochastic processes, which provide a favorable trade-off between analytical tractability and the ability to capture the correlation structure of real-world signals. Stationarity further imposes time-invariance of the second-order statistics, thereby enabling efficient statistical inference, spectral analysis, and asymptotic characterization.

Evaluating the accuracy of parametric estimators within a given probabilistic framework is fundamental to the analysis and design of signal processing systems.
In this context, the Cramér-Rao bound (CRB), defined as the inverse of the FIM, 
generally provides a lower bound on the variance of unbiased estimators and therefore constitutes a standard benchmark for assessing estimation performance.
For Gaussian models, closed-form expressions of the FIM based on $n$ consecutive observations, commonly known as the Slepian-Bangs formulas, were established in \cite{Bangs1971}, \cite{Stoica1997}, and \cite{Delmas2004} for real-valued, circular complex, and noncircular complex Gaussian random variables, respectively. More recently, these results have been generalized to the broader class of circular and noncircular complex elliptically symmetric (ES) distributions. Specifically,
 extensions were established in \cite{Besson2013,Greco2013,Abeida2019} under the assumption of a known density generators, in \cite{Fortunati2019,Fortunati2026} when the density generator is treated as an infinite-dimensional nuisance parameter, leading to semiparametric FIM, and in \cite{Abeida2023} when it is parameterized by a finite-dimensional nuisance parameter. These advances enable more realistic performance bounds for modern signal processing applications where data often exhibit non-Gaussian, heavy-tailed behavior.
 % that violates classical Gaussian assumptions.

However, the direct evaluation of the Slepian-Bangs formulas requires inversions of $mn\times mn$ covariance matrices, resulting in a computational complexity that rapidly becomes prohibitive for high-dimensional observations $m$, typically on the order of $m^3n^3$. To circumvent this limitation, Whittle \cite{Whittle1953} introduced  an asymptotic approximation of the FIM, now commonly referred to as Whittle's formula, for zero mean, purely nondeterministic stationary GPs.
Beyond its substantially lower computational burden, Whittle's approximation admits an elegant spectral-domain interpretation of the information content carried by the process, thereby establishing a direct connection between statistical efficiency and spectral characteristics. Owing to these properties, it has become a central analytical tool in the study of Gaussian time-series models, including ARMA  \cite[p. 240-242]{Box1970}. Whittle originally derived this result from the asymptotic covariance analysis of least-squares estimators in the multivariate setting, while the extension to non-zero mean processes was subsequently addressed in \cite{Zeira1990}.
Nevertheless, it was later shown in \cite{Porat1995} that the relative error of the proposed approximation does not necessarily vanish asymptotically as the number of observations tends to infinity, thereby raising important questions regarding its general validity. Subsequently, Porat established in \cite[Th. 5.3]{Porat1993} the asymptotic validity of Whittle's formula in the univariate case using linear prediction arguments. Importantly, these derivations fundamentally rely on the closure of Gaussian distributions under summation of independent random variables, a structural property that generally fails to hold for independent CG random variables. Consequently, extending Whittle-type asymptotic FIM approximations to non-Gaussian CGPs  remains a nontrivial and theoretically challenging problem.

Such an extension can instead be achieved by exploiting the Toeplitz structure of the covariance matrix associated with $n$ consecutive observations of a stationary CGP and by leveraging asymptotic results on Toeplitz matrices based on the concept of asymptotically equivalent sequences introduced by Gray \cite{Gray2006}, which provides an accessible framework for Szegö's theory \cite{Grenander1958}, which plays a central role in the spectral analysis of large structured covariance matrices.
Preliminary progress in this direction was recently reported in \cite{Delmas2024} albeit limited to the special case of zero mean, univariate, real-valued CGPs. Moreover, the formula derived therein relies on the results of \cite{Radaelli2023}, which are limited to stationary processes with finite Markov order, and involves coefficients whose analytical expressions are not explicitly available.
In contrast, building upon asymptotic results for block Toeplitz matrices \cite{Gutierrez2008,Gutierrez2011,Gutierrez2019}, the present paper derives a closed-form expression for the asymptotic FIM rate. This result can be interpreted as a genuine multidimensional extension of Whittle's formula to real-valued, stationary CGPs, without requiring finite Markov order and allowing for an arbitrary (possibly non-zero) mean.
Furthermore, the proposed framework unifies three important statistical scenarios: (i) fully known density generators, (ii) density generators parameterized by a finite-dimensional nuisance parameter, and (iii) completely unknown density generators within a semiparametric setting.
In the latter case, we extend the semiparametric FIM to the CG subclass of ES distributions.
 Importantly, it also provides explicit analytical expressions for all associated correction terms, thereby enabling efficient computation and rigorous performance analysis in practical signal processing applications.

The remainder of this paper is organized as follows.
Section \ref{sec:Background statistical} introduces the fundamental concepts of multidimensional stationary CGPs and reviews the Slepian-Bangs formula for multivariate ES random variables.  
Section \ref{sec:Semiparametric FIM for CG distribution under constraint} extends the semiparametric FIM developed so far for constrained ES distributions  to constrained CG distributions.
Section \ref{sec:Properties of the associated coefficients of the Slepian-Bangs formula} establishes new analytical properties of the coefficients appearing in the Slepian-Bangs formula when specialized to CG-distributed random variables.
Section \ref{sec:FIM rate limit for stationary SIRP} presents the proof of a generalized  Whittle's formula for stationary CGPs with arbitrary  (non-zero) mean.
Section \ref{sec:Numerical illustrations}, provides numerical illustrations,  focusing on the Student's $t$ distributed first-order autoregressive (AR(1)) process. Finally, Section \ref{sec:Conclusion} concludes the paper.

The following notations are used throughout this paper.
Matrices and vectors are represented by bold upper case and bold lower case
characters, respectively. Vectors are by default in column
orientation, while the superscripts $T$, $H$ and $*$ stand for transpose,
conjugate transpose and conjugate, respectively.
The operators $\e(.)$, $|.|$, and $\tra(.)$ are the expectation, determinant, and trace operators, respectively. 
${\bf I}$, $\mathds{1}$ and ${\bf 1}$  denote, respectively, the identity matrix, the all-ones vector, and the indicator function,
 with the appropriate dimension. The symbols $\otimes$ is the Kronecker product of two matrices. ${\rm diag}(b_1,..,b_n)$ [resp., ${\rm Blkdiag}({\bf B}_1,...,{\bf B}_n)$] is the diagonal [resp., block diagonal] matrix with
$(b_1,..,b_n)$ [resp., ${\bf B}_1,...,{\bf B}_n$] along its diagonal. 
$[{\bf A}]_{k,\ell}$ and $[{\bf A}]_{[k,\ell]}$ denote the $(k,\ell)$-th entry and the $(k,\ell)$-th block of the block matrix ${\bf A}$, respectively.
The matrix norms $\|.\|_1$, $\|.\|_2$ and $\|.\|_{\infty}$ are the standard induced norms, while  $\|.\|_{\mathrm{Fro}}$ and $\|.\|_*$ denote the Frobenius and nuclear norm (sum of the singular values), respectively. We use $\lambda_{\min}({\bf S})$ to denote the smallest eigenvalue of ${\bf S}$.
%The Gamma function is defined as $\Gamma(u)\pardef \int_{0}^{\infty}t^{u-1}\exp(-t)dt$.
%The notation $x=_d y$  indicates equality in distribution between the random variables $x$ and $y$.
The acronyms r.v., p.d.f., c.d.f. and i.f.f. are used to represent random variable, probability density function, cumulative distribution function, and ''if and only if'', respectively.
%
%%%%%%%%%%%%%%%%%%%%%%%%%%%%%%%%%%%%%%%%%%%%%%%%%%%%%%%
%%%%%%%%%%%%%%%%%%%%%%%%%%%%%%%%%%%%%%%%%%%%%%%%%%%%%%%%%%%%%
\section{Statistical Background}
\label{sec:Background statistical}
%%%%%%%%%%%%%%%%%%%%%
%
This preliminary section introduces the concepts of real-valued multidimensional stationary
SIRP and recalls  essential theoretical results that underpin the development of  our main  contributions, which will be further explored in subsequent sections. 
%%%%%%%%%%%%%%%%%%%%%%%%%%%%%%%%%%%%%%%%
\subsection{Preliminaries on multidimensional SIRP}
\label{sec:Preliminaries on multidimensional SIRP}
%%%%%%%%%%%%%%%%%%%%%%%%%%%%%%%%%%%%%%%%%%%%%%%%%%%%%%%%
%
Generally, multidimensional discrete-time random processes $({\bf x}_k)_{k \in  \mathbb{Z}}$ with ${\bf x}_k \in \mathbb{R}^m$ can be defined by explicitly specifying the $n$th order p.d.f. of the r.v.
${\bf x}=({\bf x}_{k_1}^T,..,{\bf x}_{k_n}^T)^T$  for any $k_1<...<k_n$.  Well-known examples include processes with independent variables ${\bf x}_k$, 
 processes  with independent increments (i.e., ${\bf x}_{k_j}\!-\!{\bf x}_{k_i}$), Markov processes, and Gaussian processes (GPs). By analogy with GPs, it is natural to define elliptically symmetric (ES) processes from the joint distribution of ES r.v. ${\bf x}\in \mathbb{R}^{mn}$, whose p.d.f. is given by
\begin{equation}
\label{eq:pdf RES}
p({\bf x})
=
|\boldsymbol{\Sigma}|^{-1/2}g_{mn}[({\bf x}-\boldsymbol{\mu})^T \boldsymbol{\Sigma}^{-1}({\bf x}-\boldsymbol{\mu})],
\end{equation}
where $\e({\bf x})=\boldsymbol{\mu}$, $\boldsymbol{\Sigma}$  represents a symmetric positive definite matrix  and  $g_{mn}(.)$ (called the density generator) is an arbitrary function:
$\mathbb{R}^+ \mapsto \mathbb{R}^+$ such that $\int_0^{\infty}t^{mn/2-1}g_{mn}(t)dt=\delta_{mn}\pardef\pi^{-mn/2}\Gamma(mn/2)$ ensuring that $p({\bf x})$ integrates to one.
The density generator  $g_{mn}(.)$ may be either  (i) fully specified, (ii) parameterized by a finite-dimensional nuisance parameter, or (iii) left completely unspecified.
Furthermore, to avoid the scale ambiguity $(g_{mn}(.),\boldsymbol{\Sigma})$ in \eqref{eq:pdf RES}, it is assumed that the function $g_{mn}(.)$ is constrained to ensure that $\cov({\bf x})=\boldsymbol{\Sigma}$.

The  p.d.f. in \eqref{eq:pdf RES} can be derived as the p.d.f. of the following r.v. \cite{Wise1978}:
\begin{equation}
\label{eq:Stochastic representation ES}
{\bf x}
={\boldsymbol \mu}+ \sqrt{\mathcal{Q}_{mn}}{\boldsymbol{\Sigma}}^{1/2} {\bf u},
\end{equation}
where $\mathcal{Q}_{mn}$ is a positive-valued r.v. whose p.d.f. depends on $mn$ and is given by
\begin{equation}
\label{eq:Q pdf}
p_Q(q) = \delta_{mn}^{-1} q^{mn/2-1} g_{mn}(q),
\end{equation}
and ${\bf u}$ is uniformly distributed on the unit sphere in $\mathbb{R}^{mn}$ and is independent of $\mathcal{Q}_{mn}$. Consequently,
\begin{equation}
\label{eq:Q}
\mathcal{Q}_{mn}
=({\bf x}-{\boldsymbol \mu})^T{\boldsymbol{\Sigma}}^{-1}({\bf x}-{\boldsymbol \mu}).
\end{equation}
However, among the ES distributions defined by \eqref{eq:pdf RES}, only the subclass of compound Gaussian (CG) distributions satisfies the Kolmogorov consistency condition, according to which  the ES distribution is closed under marginalization, a necessary condition for defining consistently stochastic processes.
This allows us to define CGP from CG distributed r.v..
The CG distributions of ${\bf x}$ are characterized \cite[Th. 2.2]{Yao1973}, by the existence of a positive r.v $\tau$ (called texture of ${\bf x}$) of c.d.f. $F(\tau)$ such that $g_{mn}(t)$ defined by \eqref{eq:pdf RES} must to be written in the form:
\begin{equation}
\label{eq:density generator CG}
g_{mn}(t)
\!=\!
(2\pi)^{-mn/2}
\int_0^{\infty}\!
\tau^{-mn/2}\exp(-t/2\tau)
dF(\tau).
\end{equation}
The associated p.d.f. can be also derived from the p.d.f. of the following r.v. \cite{Andrews1974}: 
\begin{equation}
\label{eq:Stochastic representation CG}
{\bf x}
={\boldsymbol \mu}+ \sqrt{\tau}{\boldsymbol{\Sigma}}^{1/2}{\bf n}_0,
\end{equation}
where the texture r.v $\tau$ is independent of the r.v. ${\bf n}_0\sim{\cal N}_{mn}({\bf 0},{\bf I})$.
This representation is a particular case of \eqref{eq:Stochastic representation ES}, with
\begin{equation}
\label{eq:Stochastic representation CG b}
\mathcal{Q}_{mn}= \tau \chi^2_{mn}
\end{equation}
where $\tau$ and $\chi^2_{mn}$ are independent.

The representation \eqref{eq:Stochastic representation CG} emphasizes the hierarchical nature of the CG process: conditional on the texture r.v. $\tau$, the r.v. ${\bf x}$ is Gaussian with mean ${\boldsymbol \mu}$ and covariance ${\tau}{\boldsymbol{\Sigma}}$, whereas the randomness of $\tau$ gives rise to the heavy-tailed distribution of ${\bf x}$.
Under the normalization constraint $\cov({\bf x})=\boldsymbol{\Sigma}$, this further implies $\e(\mathcal{Q}_{mn})=mn$ and $\e(\tau)=1$.
The definition in \eqref{eq:Stochastic representation CG} precisely characterizes the subclass of CG distributions, whose
 associated discrete-time random processes are commonly
referred to as SIRP in the engineering literature. 
%
%%%%%%%%%%%%%%%%%%%%%%%%%%%%%%%%%%%%%%%%
\subsection{Preliminaries on multidimensional stationary SIRP}
\label{sec:Preliminaries on multidimensional stationary SIRP}
%%%%%%%%%%%%%%%%%%%%%%%%%%%%%%%%%%%%%%%%%%%%%%%%%%%%%%%%
%
According to the stochastic representation \eqref{eq:Stochastic representation CG} of  a CG r.v.,
the distribution of a non-zero mean CGP is characterized by the mean ${\boldsymbol \mu}$, the distribution
of the scalar r.v $\tau$ and by the covariance matrices $\boldsymbol{\Sigma}$ of the r.v.
${\bf x}=({\bf x}_{k_1}^T,..,{\bf x}_{k_n}^T)^T$  for any $ k_1 <...< k_n$, where $n\in\mathbb{N}^*$ and $k_i \in\mathbb{Z}$.

A CGP is by definition, stationary if the distribution of the r.v. $({\bf x}_{1+k},...,{\bf x}_{n+k})$ 
 is the same as the distribution of $({\bf x}_{1},...,{\bf x}_{n})$ for all $n\in \mathbb{N}^*$, $k \in \mathbb{Z}$.
Consequently, and analogous to Gaussian processes, a CGP is stationary  i.f.f. it is wide-sense stationary.
Precisely, a CGP is stationary i.f.f.
$\e({\bf x}_{\ell})$ and
$\e({\bf x}_{\ell}{\bf x}_{\ell+k}^T)$
are independent of $\ell\in \mathbb{Z},\ \forall k \in \mathbb{Z}$.
It follows that for a non-zero mean stationary CGP, the r.v. formed by stacking $n$ consecutive observations of ${\bf x}_k$ given by
\begin{equation}
\label{eq:yn}
{\bf y}_n\pardef({\bf x}_{1}^T,...,{\bf x}_{n}^T)^T \in \mathbb{R}^{mn}\ \
\forall  n \in \mathbb{N}^*,
\end{equation}
has mean
${\boldsymbol \mu}_{{\bf y}_n} \pardef\e({\bf y}_{n})
=(\mathds{1}_{n}\otimes {\bf I}_{m}){\boldsymbol \mu}_{x}$ with 
${\boldsymbol \mu}_{x}\pardef \e({\bf x}_{\ell})$ and covariance matrix $\boldsymbol{\Sigma}_{{\bf y}_n} \pardef \cov({\bf y}_n)$
which is symmetric positive definite and has a block Toeplitz structure with block size $m$.
Its $(i,j)$-th block is given by ${\bf R}_{x}(j-i)$ with 
${\bf R}_{x}(k) \pardef \e[({\bf x}_{\ell}-{\boldsymbol \mu}_x)({\bf x}_{\ell+k}-{\boldsymbol \mu}_x)^T] \in \mathbb{R}^{m \times m}$.

We assume the following assumptions, which are essential for the subsequent asymptotic analysis.
\begin{itemize}
\item
The sequence ${\bf R}_{x}(k)$ is absolutely summable, i.e., $\sum_{k=-\infty}^{\infty}\|{\bf R}_{x}(k)\|_2<\infty$. This ensures the existence of a constant $M>0$ such that $\|{\bf R}_{x}(k)\|_2<M$ for all $k$, and guarantees that the spectra  ${\bf S}_{x}(f) \pardef \sum_k {\bf R}_{x}(k)e^{-i2\pi kf}$ is well-defined and continuous on $[0,1]$.
\item
${\bf S}_{x}(f)$ is pointwise definite positive on $[0,1]$.
\end{itemize}
These assumptions imply two key properties:
\begin{itemize}
\item
The eigenvalues of ${\bf S}_{x}(f)$ are continuous on $[0,1]$; hence, there exists $c_1>0$ such that
$\lambda_{\min}({\bf S}_{x}(f))\ge c_1$ for all $f \in [0,1]$.
\item
$\boldsymbol{\Sigma}_{{\bf y}_n}^{-1} $ is uniformly bounded in spectral norm, i.e., there exists $c_r\!>\!0$ such that 
$\|\boldsymbol{\Sigma}_{{\bf y}_n}^{-1}\|_2 =\frac{1}{\lambda_{\min}(\boldsymbol{\Sigma}_{{\bf y}_n})}< c_r$ for all $n$.
\end{itemize}
We now introduce additional regularity conditions on the parameterization. Suppose that the mean ${\boldsymbol \mu}_x$ and the sequence ${\bf R}_{x}(k)$ 
depend on a parameter $\boldsymbol{\theta}=(\theta_1,..,\theta_r)^T \in \boldsymbol{\Theta}  \subset \mathbb{R}^r$, and consequently, so does the spectral density
${\bf S}_{x}(f)$.  For notational simplicity, this dependence on $\boldsymbol{\theta}$ is omitted whenever no ambiguity arises.
We assume the mapping
$\boldsymbol{\theta} \mapsto ({\boldsymbol \mu}_x,[{\bf R}_{x}(k)]_{k\in \mathbb{Z}},{\bf S}_{x}(f))$
is differentiable and locally one-to-one in a neighborhood of 
the true value of $\boldsymbol{\theta}$. 
Moreover, for each $\ell=1,...,r$, the derivative sequence
${\bf R}_{x,\ell}^{'}(k)\pardef \frac{\partial {\bf R}_{x}(k)}{\partial \theta_{\ell}}$ is absolutely summable at the true parameter value of $\boldsymbol{\theta}$. 
Consequently, there exist constants $M'_\ell$ and $c'_{\ell}$ such that $\|{\bf R}_{x,\ell}^{'}(k)\|_2 <M'_\ell$ for all $k$ and $\|{\bf S}_{x,\ell}^{'}(f)\|_2\le c'_{\ell}$ for all $f \in [0,1]$, where  ${\bf S}_{x,\ell}^{'}(f)\pardef \frac{\partial {\bf S}_{x}(f)}{\partial \theta_{\ell}}$.
%
%%%%%%%%%%%%%%%%%%%%%%%%%%%%%
\subsection{Reminder on Slepian-Bangs's formula for ES distributions}
\label{sec:Reminder on Slepian Bangs's formula for ES distributions}
%%%%%%%%%%%%%%%%%%%%%%%%%%%%%%%%%%%%%%%%%
%
We recall the FIM (also known as the Slepian-Bangs's formula) for a non-zero mean arbitrary ES distributed r.v.
${\bf y}_n \in \mathbb{R}^{mn}$ with p.d.f. given by \eqref{eq:pdf RES},
where the density generator $g_{mn}(.)$ is either fully known  \cite{Besson2013,Greco2013}, parameterized by a finite-dimensional nuisance parameter \cite{Abeida2023}, or completely unspecified \cite{Fortunati2026}.
The standard Fisher regularity conditions required to derive the FIM in these settings reduce, in the ES framework, to the single integrability condition
$\int_{0}^{\infty}\!\frac{1}{{g}_{mn}(t)}\left[\frac{dg_{mn}(t)}{dt}\right]^2t^{mn/2+1}dt\!<\!\infty$, which assumes that 
${g}_{mn}(t)$ is  differentiable and its derivative is continuous on $(0,\, \infty)$.
Dependent on on the level of knowledge about the density generator $g_{mn}(.)$, we consider the following three FIM:
\begin{itemize}
\item
${\bf I}_{{\bf y}_n}(\boldsymbol{\theta})$: FIM for known $g_{mn}(.)$  \cite{Besson2013,Greco2013}.
\item
$\bar{\bf I}_{{\bf y}_n}(\boldsymbol{\theta}|\boldsymbol{\alpha})
={\bf I}_{{\bf y}_n}(\boldsymbol{\theta})
-{\bf I}_{{\bf y}_n}(\boldsymbol{\theta},\boldsymbol{\alpha}){\bf I}_{{\bf y}_n}(\boldsymbol{\alpha})^{-1}
{\bf I}_{{\bf y}_n}^T(\boldsymbol{\theta},\boldsymbol{\alpha})$
%{\bf I}_{{\bf y}_n}^T(\boldsymbol{\theta,\boldsymbol{\alpha}})$: 
FIM for $g_{mn}^{\boldsymbol{\alpha}}(.)$ known up to the parameter $\boldsymbol{\alpha}$. The matrix
$\small{\left(\begin{array}{cc}
{\bf I}_{{\bf y}_n}(\boldsymbol{\theta})& {\bf I}_{{\bf y}_n}(\boldsymbol{\theta},\boldsymbol{\alpha})\\
{\bf I}_{{\bf y}_n}^T(\boldsymbol{\theta},\boldsymbol{\alpha})& {\bf I}_{{\bf y}_n}(\boldsymbol{\alpha})\\
\end{array}
\right)}$ denotes the joint FIM of the full parameter vector $(\boldsymbol{\theta}^T, \boldsymbol{\alpha}^T)^T$ for the parametrized p.d.f. $p({\bf y}_n$)
 \cite{Abeida2023}. This case requires the additional regularity conditions
$\int_{0}^{\infty}\frac{1}{{g}_{mn}^{\boldsymbol{\alpha}}(t)}\left[ \frac{dg_{mn}^{\boldsymbol{\alpha}}(t)}{dt}\right]
\left[\nabla_{\boldsymbol{\alpha}}g_{mn}^{\boldsymbol{\alpha}}(t)\right]t^{mn/2}dt<\infty$
and
$\int_{0}^{\infty}\left[\nabla_{\boldsymbol{\alpha}}g_{mn}^{\boldsymbol{\alpha}}(t)\right]
\left[\nabla_{\boldsymbol{\alpha}}g_{mn}^{\boldsymbol{\alpha}}(t)\right]^Tt^{mn/2}dt<\infty$.
\item
$\bar{\bf I}_{{\bf y}_n}(\boldsymbol{\theta}|\bar{g}_{mn})$: semiparametric FIM for ES distributions, treating $g_{mn}(.)$  as an unknown infinite-dimensional nuisance parameter, under the constraint $\e(\mathcal{Q}_{mn})=mn$ with $\e(\mathcal{Q}_{mn}^2)<\infty$ \cite{Fortunati2026}.
\end{itemize}
Interestingly, all three forms share the same structural representation as that of  
the corresponding elementwise FIM expression:
\begin{eqnarray}
\nonumber
\left[{\rm FIM}_{{\bf y}_n}(\boldsymbol{\theta})\right]_{k,\ell}
\!\!\!\!&=&\!\!\!\!
a_{0,mn}{\boldsymbol \mu'}^T_{{\bf y}_n,k}\ \boldsymbol{\Sigma}_{{\bf y}_n}^{-1}{\boldsymbol \mu}'_{{{\bf y}_n},\ell}
\\
\nonumber
\!\!\!\!&+&\!\!\!\!
a_{1,mn}
 \tra(\boldsymbol{\Sigma}_{{\bf y}_n}^{-1}\boldsymbol{\Sigma}_{{\bf y}_n,k}^{'}
 \boldsymbol{\Sigma}_{{\bf y}_n}^{-1}\boldsymbol{\Sigma}_{{\bf y}_n,\ell}^{'})
\\
\label{eq:Slepian-Bangs's formula}
\!\!\!\!&+&\!\!\!\!
a_{2,mn}\tra(\boldsymbol{\Sigma}_{{\bf y}_n}^{-1}\boldsymbol{\Sigma}_{{\bf y}_n,k}^{'})\tra(\boldsymbol{\Sigma}_{{\bf y}_n}^{-1}\boldsymbol{\Sigma}_{{\bf y}_n,\ell}^{'}),
\end{eqnarray}
where ${\boldsymbol \mu}'_{{\bf y}_n,k}\pardef \frac{\partial {\boldsymbol \mu}_{{\bf y}_n}}{\partial \theta_k}$
and  
$\boldsymbol{\Sigma}_{{\bf y}_n,k}^{'}\pardef \frac{\partial \boldsymbol{\Sigma}_{{\bf y}_n}}{\partial \theta_k}$
and 
with identical coefficients $a_{0,mn}$ and  $a_{1,mn}$, while differing only in the value of $a_{2,mn}$, which captures the uncertainty associated with the density generator.
The coefficients $a_{0,mn}$ and $a_{1,mn}$ are given by
\begin{equation}
\label{eq:a0a1}
a_{0,mn}=\xi_{1,mn}
\ \mbox{and}\
a_{1,mn}=\frac{1}{2}\xi_{2,mn}
\end{equation}
with\footnote{We note that 
$\xi_{2,mn}$ is free of scale ambiguity in contrast to $\xi_{1,mn}$.}
\begin{eqnarray}
\label{eq:def xi1r}
\xi_{1,mn}
&\pardef&
\frac{\e[\mathcal{Q}_{mn}\varphi^2(\mathcal{Q}_{mn})]}{mn}
\\
\label{eq:def x2r}
\xi_{2,mn}
&\pardef&
\frac{\e[\mathcal{Q}_{mn}^2\varphi^2(\mathcal{Q}_{mn})]}{mn(mn+2)},
\end{eqnarray}
where
\begin{equation}
\label{eq:dephi}
\varphi(t)
\pardef   
-\frac{2}{g_{mn}(t)}\frac{d g_{mn}(t)}{dt}.
\end{equation}
Regarding the coefficient $a_{2,mn}$, we have the following three cases:
\begin{eqnarray}
\label{eq:a2class}
a_{2,mn}^{\rm Clas}
&=&
\frac{1}{4}(\xi_{2,mn}\!-\!1) \ \mbox{for known}\ g_{mn},
\\
\nonumber
a_{2,mn}^{\rm Pa}
\!\!\!\!&=&\!\!\!\!
a_{2,mn}^{\rm Clas}- {\boldsymbol \xi}_{3,mn}^T \Xi_{4,mn}^{-1}{\boldsymbol \xi}_{3,mn}\ 
\\
\label{eq:a2clpa}
&&\hspace{1.8cm}
\mbox{for}\ g_{mn}\ \mbox{known up to}\ {\boldsymbol \alpha},   
\\
\nonumber
a_{2,mn}^{\rm SePa}
\!\!\!\!&=&\!\!\!\!
\frac{1}{\sigma^2_{\mathcal{Q}_{mn}}}\!-\! \frac{1}{2mn}\xi_{2,mn}\ \mbox{for unknown}\ g_{mn}
\\
\label{eq:a2clsepa}
&&
\mbox{under the constraint}\ \e(\mathcal{Q}_{mn})=mn,
\end{eqnarray}
where ${\boldsymbol \xi}_{3,mn}\pardef \frac{\e[\mathcal{Q}_{mn}\varphi(Q_{mn}){\boldsymbol \varphi}^{\boldsymbol \alpha}(\mathcal{Q}_{mn})]}{mn},
\Xi_{4,mn}\pardef \e[{\boldsymbol \varphi}^{\alpha}(\mathcal{Q}_{mn}){{\boldsymbol \varphi}^{\boldsymbol \alpha}}^T(\mathcal{Q}_{mn})]$
with
${\boldsymbol \varphi}^{\boldsymbol \alpha}(t)\pardef - \frac{1}{g_{mn}^{\boldsymbol \alpha}(t)}\nabla_{\boldsymbol{\alpha}}g_{mn}^{\boldsymbol{\alpha}}(t)$, and $\sigma^2_{\mathcal{Q}_{mn}}\pardef \var(\mathcal{Q}_{mn})$.
Finally, for the Gaussian distribution, which corresponds to $\varphi(t) =1$, we recover the classical values  $(a_{0,mn},a_{1,mn},a_{2,mn}^{\rm Clas})=(1,1/2,0)$. 
%
%%%%%%%%%%%%%%%%%%%%%%%%%%%%%%%%%%%%%%%%%%%%%%%%%%%%%%%%%%%%%%%%%%%%%%%%%%%%%%%
\section{Semiparametric FIM for CG Distributions Under the Constraint $\e(\tau)=1$}
\label{sec:Semiparametric FIM for CG distribution under constraint}
%%%%%%%%%%%%%%%%%%%%%%%%%%%%%%%%%%%%%%%%%%%%%%
%
The broader context within which CG distributions can be placed is that of
semiparametric models. For an in-depth discussion of semiparametric theory,
we refer the reader to the seminal book \cite{Bickel1993}. Here, without any
claim to completeness, we confine ourselves to those points necessary for the
continuation of our study. Specifically, the purpose of this section is to
extend the semiparametric FIM derived in \cite{Fortunati2026} for ES
distributions under the constraint $\e(\mathcal{Q}_{mn})=mn$ to CG
distributions under the constraint $\e(\tau)=1$. Although the two constraints
are equivalent within the CG class, this extension is not a mere
particularization, since the nuisance parameterizations of the two models
differ. Moreover, a priori, the additional structural information that the
data belong to the CG subclass should \emph{increase} the semiparametric FIM
relative to that provided in \cite{Fortunati2026}; that this does not occur,
as shown by Result~\ref{Semiparametric FIM CG}, is therefore a nontrivial
fact.

Let $\mathcal{F}$ denote the set of all c.d.f.s of the (continuous, discrete,
or mixed) texture r.v.\ $\tau$. In our study, we consider the subset
\begin{equation}
\label{set_F_c}
\overline{\mathcal{F}} 
=\left\{\bar{F} \in \mathcal{F}  \left|  \int_{0}^{\infty}t\ d\bar{F}(t)  =\e(\tau) = 1\right. \right\}.
\end{equation}
Let us now introduce the quadratic form parameterized by
$\boldsymbol{\theta}\in\boldsymbol{\Theta}$:
\begin{equation}
\label{eq:Q_alpha}
\mathcal{Q}_{\boldsymbol{\theta}}({\bf y}_n)
=({\bf y}_n-{\boldsymbol \mu}_{{\bf y}_n}(\boldsymbol{\theta}))^T
{\boldsymbol{\Sigma}_{{\bf y}_n}^{-1}(\boldsymbol{\theta})}
({\bf y}_n-{\boldsymbol \mu}_{{\bf y}_n}(\boldsymbol{\theta})),
\end{equation}
for all $\boldsymbol{\theta}\in\boldsymbol{\Theta}$. The \textit{constrained}
semiparametric CG model is then defined as
\begin{equation}
\label{eq:c_CG_sempar_model}
{\mathcal{P}}^c_{\boldsymbol{\theta},\bar{F}}
\!=\!\left\{ p({\bf y}_n|\boldsymbol{\theta},\bar{F})
\!=\!\int_{0}^{\infty}\!\! p({\bf y}_n|\boldsymbol{\theta},t)\,d\bar{F}(t):
\boldsymbol{\theta}\in \boldsymbol{\Theta},\ \bar{F}\in \overline{\mathcal{F}}
\right\},
\end{equation}
where, from \eqref{eq:pdf RES} and \eqref{eq:density generator CG},
\begin{equation}
\label{eq:def_C}
\begin{split}
p({\bf y}_n|\boldsymbol{\theta},t)
&= (2\pi t)^{-mn/2} |\boldsymbol{\Sigma}_{{\bf y}_n}(\boldsymbol{\theta})|^{-1/2}
\exp\!\left(-\mathcal{Q}_{\boldsymbol{\theta}}({\bf y}_n)/2t\right)\\
&= \mathcal{N}({\boldsymbol \mu}_{{\bf y}_n}(\boldsymbol{\theta}),\,
t\,\boldsymbol{\Sigma}_{{\bf y}_n}(\boldsymbol{\theta})).
\end{split}
\end{equation}
In the semiparametric framework, only the finite-dimensional parameter
$\boldsymbol{\theta}\in\boldsymbol{\Theta}$ is of interest, while the
functional (infinite-dimensional) parameter
$\bar{F}\in\overline{\mathcal{F}}$ is treated as a nuisance. It is therefore
of interest to compute the so-called semiparametric efficient FIM
$\bar{\bf I}(\boldsymbol{\theta}|\bar{F})$, which, roughly speaking, is the
FIM for $\boldsymbol{\theta}$ in the presence of the unknown nuisance c.d.f.\
$\bar{F}$. As discussed in \cite[Sect.~3.3]{Bickel1993}, evaluating
$\bar{\bf I}(\boldsymbol{\theta}|\bar{F})$ requires three ingredients:
\begin{itemize}
\item
the (infinite-dimensional) nuisance tangent space
$\mathcal{T}^c_{\bar{F}}$;
\item
the orthogonal projection operator onto $\mathcal{T}^c_{\bar{F}}$, denoted
$\Pi(\cdot|\mathcal{T}^c_{\bar{F}})$;
\item
the semiparametric efficient score vector
$\bar{\bf s}_{\boldsymbol{\theta}}\equiv\bar{\bf s}_{\boldsymbol{\theta}}({\bf y}_n)$,
defined as
\begin{equation}
\label{eq:efficient score}
\bar{\bf s}_{\boldsymbol{\theta}} \pardef {\bf s}_{\boldsymbol{\theta}} -
\Pi({\bf s}_{\boldsymbol{\theta}}|\mathcal{T}^c_{\bar{F}}),
\end{equation}
where ${\bf s}_{\boldsymbol{\theta}} \pardef
\nabla_{\boldsymbol{\theta}}\ln p({\bf y}_n|\boldsymbol{\theta},\bar{F})$ is
the standard score vector evaluated at the c.d.f.\ $\bar{F}$.
\end{itemize}
Consequently, the semiparametric efficient FIM is defined as
\cite[Sect.~3.3]{Bickel1993}
\begin{equation}
\bar{\bf I}(\boldsymbol{\theta}|\bar{F}) \pardef
\e\!\left(\bar{\bf s}_{\boldsymbol{\theta}}
\bar{\bf s}_{\boldsymbol{\theta}}^T\right).
\end{equation}
Readers interested in the mathematical derivation of the aforementioned
ingredients will find it in the supplemental material. In this section, we
confine ourselves to presenting the final result:
\begin{result}
\label{prop_c}
Let $\mathcal{P}^c_{\boldsymbol{\theta},\bar{F}}$ in
\eqref{eq:c_CG_sempar_model} be the constrained CG semiparametric model.
Then, the (infinite-dimensional) nuisance tangent space
$\mathcal{T}^c_{\bar{F}}$ at $\bar{F} \in \overline{\mathcal{F}}$ is given by
\begin{equation}
\label{eq:Tc_G0_1}
\begin{split}
\mathcal{T}^c_{\bar{F}}
&= \left\lbrace h : \mathbb{R}^{mn} \rightarrow \mathbb{R} |
h\ \text{is $\sigma(\mathcal{Q}_{mn})$-measurable},\right. \\
& \qquad \left. \e(h(\mathcal{Q}_{mn})) =0,\;
\e(\mathcal{Q}_{mn}h(\mathcal{Q}_{mn}) )=0
\right\rbrace,
\end{split}
\end{equation}
where $\sigma(\mathcal{Q}_{mn})\subset \mathfrak{B}(\mathcal{X})$ is the
sub-$\sigma$-algebra generated by the r.v.\
$\mathcal{Q}_{mn} =Q_{\boldsymbol{\theta}}({\bf y}_n)$ in \eqref{eq:Q_alpha},
with $\boldsymbol{\theta}$ here denoting the true parameter vector.
Moreover, the orthogonal projection of ${\bf s}_{\boldsymbol{\theta}}$ onto
$\mathcal{T}^c_{\bar{F}}$ is given by
\begin{equation}
\label{eq:proj_cg}
\Pi({\bf s}_{\boldsymbol{\theta}}|\mathcal{T}^c_{\bar{F}})
= \e({\bf s}_{\boldsymbol{\theta}}|\mathcal{Q}_{mn})
- \e(\mathcal{Q}_{mn}{\bf s}_{\boldsymbol{\theta}})\,
\sigma_{\mathcal{Q}_{mn}}^{-2}\,(\mathcal{Q}_{mn}-mn).
\end{equation}
\end{result}
Comparing \eqref{eq:proj_cg} with the orthogonal projection
\cite[eq.~(81)]{Fortunati2026}, we immediately obtain the following:
\begin{result}
\label{Semiparametric FIM CG}
The semiparametric efficient FIM
$\bar{\bf I}_{{\bf y}_n}(\boldsymbol{\theta}|\bar{F})$ for CG distributions
under the constraint $\e(\tau)=1$ is identical to the semiparametric
efficient FIM $\bar{\bf I}_{{\bf y}_n}(\boldsymbol{\theta}|\bar{g}_{mn})$
for ES distributions under the constraint $\e(\mathcal{Q}_{mn})=mn$, given
in \cite[eq.~(116)]{Fortunati2026} and, here, by
\eqref{eq:Slepian-Bangs's formula} with \eqref{eq:a0a1} and
\eqref{eq:a2clsepa}.
\end{result}
In other words, although restricting the ES model to the CG subclass reduces the infinite-dimensional nuisance space, this additional structural knowledge provides no further information about the finite dimensional parameter $\boldsymbol{\theta}$. Hence, the semiparametric CRB derived in \cite{Fortunati2026} remains the relevant performance bound for CG data.
%
%%%%%%%%%%%%%%%%%%%%%%%%%%%%%%%%%%%%%%%%%%%%%%%%%%%%%%%%%%%%%%%%%%%%%%%%%%%%%%%%%%%%%%%%%%%%%%%%%%%%%%%%%%%%%%%%%%%%%%%%%%%%%%%%%%%%%
\section{Properties of the associated coefficients of the Slepian-Bangs formula}
\label{sec:Properties of the associated coefficients of the Slepian-Bangs formula}
%%%%%%%%%%%%%%%%%%%%%%%%%%%%%%%%%%%%%%%%%
%%%%%%%%%%%%%%%%%%%%%%%%%%%%%%%%%%%%%%%%%%%%%%%%%%%%%%%%%%%%
%
For arbitrary ES distribution, there are no general properties for the coefficients $\xi_{1,mn}$ and $\xi_{2,mn}$ except the inequalities:
\begin{equation}
\label{eq:inequalities a0a2}
\xi_{1,mn}\ge 1
\ \ \mbox{and}\ \
\xi_{2,mn} \ge \frac{mn}{mn+2}
\end{equation}
derived from the respective Cauchy-Schwarz inequalities:
$(\e[\mathcal{Q}_{mn}^{1/2}.\mathcal{Q}_{mn}^{1/2}\varphi(\mathcal{Q}_{mn}])^2\le \e[\mathcal{Q}_{mn}] \e[\mathcal{Q}_{mn}\varphi^2(\mathcal{Q}_{mn})]$ and
$(\e[1.\mathcal{Q}_{mn}\varphi(\mathcal{Q}_{mn}])^2\le \e[1^2] \e[\mathcal{Q}_{mn}^2\varphi^2(\mathcal{Q}_{mn})]$
(using $\e[\mathcal{Q}_{mn}\varphi(\mathcal{Q}_{mn})]=mn$ proved in \cite{Abeida2019}).

We now present  additional novel analytical results concerning the coefficients  $a_{i,mn}$, $i=0,1,2$ in the Slepian-Bangs's formula \eqref{eq:Slepian-Bangs's formula} specialized to CG distributions. These results will play a key role in the developments of
\ref{sec:Asymptotic properties of the sequence of coefficients}.
In particular, by exploiting the representation in \eqref{eq:Stochastic representation CG b}, the following lemma is established in the 
 supplemental material:
\begin{lemma}
\label{property phi}
The r.v. $\varphi(\mathcal{Q}_{mn})$, which appears in the definition \eqref{eq:def xi1r} and \eqref{eq:def x2r}
of $\xi_{1,mn}$ and $\xi_{2,mn}$, respectively, can be expressed through the following conditional expectation:
\begin{equation}
\label{eq:phi q}
\varphi(\mathcal{Q}_{mn})
=
\e[\tau^{-1}|\mathcal{Q}_{mn}].
\end{equation}
\end{lemma}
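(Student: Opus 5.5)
The plan is to differentiate the CG density generator \eqref{eq:density generator CG} under the integral sign and then read the resulting ratio as a posterior expectation. Differentiating in $t$ gives
\begin{equation*}
\frac{dg_{mn}(t)}{dt}=-\frac{1}{2}(2\pi)^{-mn/2}\int_0^{\infty}\tau^{-mn/2-1}\exp(-t/2\tau)\,dF(\tau).
\end{equation*}
From the definition \eqref{eq:dephi} we then obtain
\begin{equation*}
\varphi(t)=\frac{\int_0^{\infty}\tau^{-1}\,\tau^{-mn/2}\exp(-t/2\tau)\,dF(\tau)}{\int_0^{\infty}\tau^{-mn/2}\exp(-t/2\tau)\,dF(\tau)}.
\end{equation*}
The interchange of derivative and integral will be justified by dominated convergence on any compact subinterval $[t_0,\infty)$ with $t_0>0$. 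The bound we use is $\tau^{-mn/2-1}e^{-t/2\tau}\le \tau^{-mn/2-1}e^{-t_0/2\tau}$, which is bounded uniformly in $\tau>0$, so it is integrable against the probability measure $dF$. No moment assumption on $\tau$ is needed.

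The second step identifies this ratio with $\e[\tau^{-1}|\mathcal{Q}_{mn}=t]$ via the representation \eqref{eq:Stochastic representation CG b}. Conditionally on $\tau$, we have $\mathcal{Q}_{mn}=\tau\chi^2_{mn}$, whose conditional density is
\begin{equation*}
p(q|\tau)=\frac{q^{mn/2-1}\tau^{-mn/2}e^{-q/2\tau}}{2^{mn/2}\Gamma(mn/2)}.
\end{equation*}
By Bayes' rule, the conditional law of $\tau$ given $\mathcal{Q}_{mn}=q$ is
\begin{equation*}
dF(\tau|q)=\frac{\tau^{-mn/2}e^{-q/2\tau}\,dF(\tau)}{\int_0^{\infty}s^{-mn/2}e^{-q/2s}\,dF(s)},
\end{equation*}
because the factor $q^{mn/2-1}/(2^{mn/2}\Gamma(mn/2))$ cancels. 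Integrating $\tau^{-1}$ against this conditional law reproduces exactly the ratio obtained above. Evaluating at $q=\mathcal{Q}_{mn}$ then gives \eqref{eq:phi q}.

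The main obstacle is making the conditional-expectation step rigorous when $F$ is an arbitrary (discrete, continuous or mixed) c.d.f., so that $\tau$ need not have a density. We will handle this with the disintegration form of Bayes' rule. For any bounded measurable $h$, Fubini's theorem gives
\begin{equation*}
\e[\tau^{-1}h(\mathcal{Q}_{mn})]=\int_0^{\infty}h(q)\left[\int_0^{\infty}\tau^{-1}p(q|\tau)\,dF(\tau)\right]dq.
\end{equation*}
This equals $\e[\varphi(\mathcal{Q}_{mn})h(\mathcal{Q}_{mn})]$, because $p_Q(q)=\int_0^{\infty}p(q|\tau)\,dF(\tau)$, which is consistent with \eqref{eq:Q pdf}. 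That identity is exactly the defining property of the conditional expectation.

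Integrability of $\tau^{-1}$ is not guaranteed in general, so we will either work with nonnegative r.v.s, where the conditional expectation is always defined, or note that $\varphi(q)<\infty$ for every $q>0$ by the boundedness argument above. Either way the identity holds almost surely.
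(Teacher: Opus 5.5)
Your proof is correct and takes the same route as the paper. You differentiate the CG density generator under the integral sign to obtain $\varphi(t)=\int_0^\infty\tau^{-mn/2-1}e^{-t/2\tau}\,dF(\tau)\,/\int_0^\infty\tau^{-mn/2}e^{-t/2\tau}\,dF(\tau)$, then identify this ratio with $\e[\tau^{-1}|\mathcal{Q}_{mn}=t]$ via Bayes' rule applied to the conditional density of $\mathcal{Q}_{mn}=\tau\chi^2_{mn}$ given $\tau$. Your uniform bound on $\tau^{-k}e^{-t_0/2\tau}$ (valid on $[t_0,\infty)$, which is not compact, but this is harmless) and your Fubini check of the defining property of conditional expectation for arbitrary $F$ supply the rigor that the paper only invokes as ``standard integrability conditions.''
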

This lemma \ref{property phi} enables us to prove the following lemma\footnote{$\var(y| x)$ denotes the conditional variance of  the r.v. $y$ given $x$, i.e., 
$\var(y|x) \pardef \e\{[y-\e(y|x)]^2|x\}$.} in the  supplemental material:
\begin{lemma}
\label{property xi 2}
\begin{equation}
\label{eq:xi 2}
\xi_{2,mn}
=
1- \frac{\e[\var(\chi_{mn}^2|\mathcal{Q}_{mn})]}{mn(mn+2)},
\end{equation}
\end{lemma}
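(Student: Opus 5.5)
The plan is to combine Lemma \ref{property phi} with the representation \eqref{eq:Stochastic representation CG b}, $\mathcal{Q}_{mn}=\tau\chi^2_{mn}$, where $\tau$ and $\chi^2_{mn}$ are independent. Since $\mathcal{Q}_{mn}$ is $\sigma(\mathcal{Q}_{mn})$-measurable, Lemma \ref{property phi} gives
\begin{equation*}
\mathcal{Q}_{mn}\varphi(\mathcal{Q}_{mn})=\e[\mathcal{Q}_{mn}\tau^{-1}|\mathcal{Q}_{mn}]=\e[\chi^2_{mn}|\mathcal{Q}_{mn}].
\end{equation*}
The key step is exactly this identity: it turns the numerator of $\xi_{2,mn}$ in \eqref{eq:def x2r} into the second moment of a conditional expectation,
\begin{equation*}
\e[\mathcal{Q}_{mn}^2\varphi^2(\mathcal{Q}_{mn})]=\e\big[(\e[\chi^2_{mn}|\mathcal{Q}_{mn}])^2\big].
\end{equation*}

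Next I will apply the law of total variance in its second-moment form,
\begin{equation*}
\e[(\chi^2_{mn})^2]=\e\big[(\e[\chi^2_{mn}|\mathcal{Q}_{mn}])^2\big]+\e[\var(\chi^2_{mn}|\mathcal{Q}_{mn})].
\end{equation*}
This requires $\chi^2_{mn}$ to be square integrable, which holds. The chi-square moment is $\e[(\chi^2_{mn})^2]=\var(\chi^2_{mn})+(mn)^2=2mn+(mn)^2=mn(mn+2)$. Substituting and dividing by $mn(mn+2)$ gives \eqref{eq:xi 2} directly.

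The only delicate point is justifying the pull-out $\mathcal{Q}_{mn}\e[\tau^{-1}|\mathcal{Q}_{mn}]=\e[\mathcal{Q}_{mn}\tau^{-1}|\mathcal{Q}_{mn}]$. This needs integrability of $\mathcal{Q}_{mn}\tau^{-1}=\chi^2_{mn}$, which is immediate since its mean is $mn$. A secondary point is that Lemma \ref{property phi} holds almost surely, which suffices because only expectations are involved. I do not expect a real obstacle here.

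A useful by-product is that the identity makes the bound $\xi_{2,mn}\le1$ transparent. Equality holds iff $\chi^2_{mn}$ is a function of $\mathcal{Q}_{mn}$, that is, iff $\tau$ is degenerate (the Gaussian case).
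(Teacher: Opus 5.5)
Your proposal is correct and follows essentially the same route as the paper's proof. The paper also uses Lemma~\ref{property phi} to write $\mathcal{Q}_{mn}\varphi(\mathcal{Q}_{mn})=\e[\chi^2_{mn}|\mathcal{Q}_{mn}]$, then expands $\e\{(\e[\chi^2_{mn}|\mathcal{Q}_{mn}])^2\}=\var(\e[\chi^2_{mn}|\mathcal{Q}_{mn}])+(mn)^2$ and applies the law of total variance with $\var(\chi^2_{mn})=2mn$, which is your second-moment identity split into two steps.
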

\noindent
which in turn allows us to refine the second inequality in \eqref{eq:inequalities a0a2} as:
\begin{equation}
\label{eq:inequalities xi2}
 \frac{mn}{mn+2} \le \xi_{2,mn} \le 1.
\end{equation}
Finally, one may expect that more knowledge about the density generator $g_{mn}(.)$ yields a larger (in the sense of positive definite matrices ordering) FIM for 
$\boldsymbol{\theta}$, i.e.,
\begin{equation}
\label{eq:inequalities FIM}
\bar{\bf I}_{{\bf y}_n}(\boldsymbol{\theta}|\bar{F})
\le
\bar{\bf I}_{{\bf y}_n}(\boldsymbol{\theta}|\boldsymbol{\alpha})
\le
{\bf I}_{{\bf y}_n}(\boldsymbol{\theta}).
\end{equation}
Because the FIM in \eqref{eq:Slepian-Bangs's formula} is expressed as a linear combination of three positive definite matrices 
${\bf M}_{0,n}=\frac{\partial {\boldsymbol \mu}_{{\bf y}_n}^T}{\partial {\boldsymbol \theta}^T} \boldsymbol{\Sigma}_{{\bf y}_n}^{-1} \frac{\partial {\boldsymbol \mu}_{{\bf y}_n}}{\partial {\boldsymbol \theta}^T}$, 
${\bf M}_{1,n}=\frac{\partial \ve^T( \boldsymbol{\Sigma}_{{\bf y}_n})}{\partial {\boldsymbol \theta}^T}\boldsymbol{\Sigma}_{{\bf y}_n}^{-T}\otimes \boldsymbol{\Sigma}_{{\bf y}_n}^{-1}\frac{\partial \ve( \boldsymbol{\Sigma}_{{\bf y}_n})}{\partial {\boldsymbol \theta}^T}$ and 
${\bf M}_{2,n}=\frac{\partial \ve^T( \boldsymbol{\Sigma}_{{\bf y}_n})}{\partial {\boldsymbol \theta}^T}\ve(\boldsymbol{\Sigma}_{{\bf y}_n}^{-1})\ve^T(\boldsymbol{\Sigma}_{{\bf y}_n}^{-1})\frac{\partial \ve( \boldsymbol{\Sigma}_{{\bf y}_n})}{\partial {\boldsymbol \theta}^T}$,  inequality \eqref{eq:inequalities FIM} is equivalent to 
\begin{equation}
\label{eq:inequalities a2}
a_{2,mn}^{\rm SePa}
\le
a_{2,mn}^{\rm Pa}
\le
a_{2,mn}^{\rm Clas}.
\end{equation}
The inequality $a_{2,mn}^{\rm Pa}
\le
a_{2,mn}^{\rm Clas}$ follows directly from \eqref{eq:a2clpa}. In contrast, establishing the inequality $a_{2,mn}^{\rm SePa}
\le
a_{2,mn}^{\rm Pa}$, appears significantly more involved. Nevertheless, the following result is established in the Appendix
\ref{Proof of Result property FIM inequality}:
\begin{result}
\label{property FIM inequality}
For arbitrary CG distribution, 
\begin{equation}
\label{eq:inequality a2 clas sepa}
a_{2,mn}^{\rm SePa}
\le
a_{2,mn}^{\rm Clas},
\end{equation}
with equality i.f.f. the distribution is Gaussian.
\end{result}
%
%%%%%%%%%%%%%%%%%%%%%%
%%%%%%%
\section{FIM rate limit for stationary SIRP}
\label{sec:FIM rate limit for stationary SIRP}
%%%%%%%%%%%%%%%%%%%%%%%%%%%%%%%%%%%%%%%%%
%
This section is devoted to analyzing the asymptotic behavior of the FIM rate $\frac{1}{n}{\rm FIM}_{{\bf y}_n}(\boldsymbol{\theta})$ as 
$n\rightarrow \infty$, based on the closed-form expression of the FIM in
 \eqref{eq:Slepian-Bangs's formula} 
associated with $n$ consecutive observations of stationary SIRPs.
To this end, note that in \eqref{eq:Slepian-Bangs's formula},
the coefficients $a_{i,mn}, i=0,1,2$ depend solely on the distribution of the texture r.v. $\tau$, whereas the matrices ${\bf M}_{i,n}$, $i=0,1,2$ depend solely on the parameterized moments ${\boldsymbol \mu}_{{\bf y}_n}$ and ${\boldsymbol \Sigma}_{{\bf y}_n}$.
Moreover, $\boldsymbol{\Sigma}_{{\bf y}_n}$  exhibits a block-Toeplitz structure whose first block row is $({\bf R}_x(0),{\bf R}_x(1),..,{\bf R}_x(n\!-\!1))$.
Consequently, the asymptotic analysis of the FIM rate reduces to separately studying the asymptotic behavior of the coefficient sequences 
$a_{i,mn}$ and the matrix sequences
${\bf M}_{i,n}$, $i=0,1,2$.
%
%%%%%%%%%%%%%%%%%%%%%%%%%%%%%%%%%%%%%%%
\subsection{Asymptotic properties of the sequence of matrices ${\bf M}_{i,n}$}
\label{sec:General case for M}
%%%%%%%%%%%%%%%%%%%%%%%%%%%%%%%%%%%%%%
%
Let us first consider the study of the sequence $[{\bf M}_{0,n}]_{k,\ell}
={\boldsymbol \mu}_{x,k}^T(\mathds{1}_{n}^T\otimes {\bf I}_{m})
\boldsymbol{\Sigma}_{{\bf y}_n}^{-1}
(\mathds{1}_{n}\otimes {\bf I}_{m}){\boldsymbol \mu}_{x,\ell}$, for which, the following result is proven in the Appendix
\ref{Proof of Result Result matrice mean limit}:
\begin{result}
\label{Result matrice mean limit}
Under the assumptions given in Section \ref{sec:Preliminaries on multidimensional stationary SIRP}, we have the limit:
\begin{equation}
\label{eq: matrice mean limit}
\lim_{n \rightarrow \infty}\frac{1}{n}{\boldsymbol \mu'}^T_{{\bf y}_n,k}\ \boldsymbol{\Sigma}_{{\bf y}_n}^{-1}{\boldsymbol \mu}'_{{{\bf y}_n},\ell}
={\boldsymbol \mu'}^T_{{\bf x}_k}{\bf S}_{x}^{-1}(0){\boldsymbol \mu}'_{{\bf x}_\ell}.
\end{equation}
\end{result}
We now investigate the asymptotic behavior of the sequences $[{\bf M}_{1,n}]_{k,\ell}$ 
and $[{\bf M}_{2,n}]_{k,\ell}$. To this end, we use the notion of asymptotically equivalent matrix sequences, denoted by $\sim$, originally introduced by Gray~\cite{Gray2006} for scalar Toeplitz matrices and later extended to block Toeplitz matrices by Guti\'{e}rrez~\cite{Gutierrez2011}. This framework provides a convenient connection to the classical Szeg\"o's theory for multivariate time series \cite{Grenander1958}.
For completeness, we recall the definition:
\begin{definition}
For $d_n^1, d_n^2$ strictly increasing, $d_n^1 \times d_n^2$ matrices ${\bf A}_n$ and ${\bf B}_n$,
${\bf A}_n \sim {\bf B}_n$ means that:
$\|{\bf A}_n\|_2, \|{\bf B}_n\|_2 \le M <\infty$ and $\lim_{n\rightarrow \infty}\frac{\|{\bf A}_n-{\bf B}_n\|_{\rm Fro}}{\sqrt{n}}=0$ \cite[Definition 3.1]{Gutierrez2011}.
\end{definition}
Following Gray's notation, let
${\bf T}_n({\bf S}_{x}(f)) \pardef \boldsymbol{\Sigma}_{{\bf y}_n}$ denotes the $mn \times mn$ block Toeplitz matrix generated by the spectral density ${\bf S}_x(f)$.
We also define the associated block circulant matrix  with $m \times m$ blocks as
\begin{equation}
\label{eq:Def C}
{\bf C}_n({\bf S}_{x}(f))
\pardef
{\bf W}_{n,m}
{\rm Diag}({\bf S}_{x,n})
{\bf W}_{n,m}^H
\end{equation}
where
${\bf W}_{n,m}  \pardef {\bf W}_n \otimes {\bf I}_m$ and ${\bf W}_n \in \mathbb{C}^{n \times n}$ is the discrete Fourier transform unitary symmetric matrix  defined by
$[{\bf W}_n]_{k,\ell}\pardef \frac{1}{\sqrt{n}}e^{i2\pi (k-1)(\ell-1)/n},\ k,\ell = 1,\dots,n.$ 
The block-diagonal matrix $\mathrm{Diag}({\bf S}_{x,n})$ is given by
\begin{equation}
\label{eq:def Diag}
{\rm Diag}({\bf S}_{x,n})\!\!\pardef\!\!{\rm Blkdiag}({\bf S}_{x}(0),{\bf S}_{x}(\frac{1}{n}),..
.,{\bf S}_{x}(\frac{n\!-\!1}{n})).
\end{equation}
With these tools, the following result is proven in the Appendix
\ref{Proof of Result Result matrice variance limit}:
\begin{result}
\label{Result matrice variance limit}
Under the assumptions given in Section \ref{sec:Preliminaries on multidimensional stationary SIRP}, we have the limits:
\begin{equation}
\label{eq: matrice variance limit 1}
\lim_{n \rightarrow \infty}\frac{1}{n}
\tra(\boldsymbol{\Sigma}_{{\bf y}_n}^{-1}\boldsymbol{\Sigma}_{{\bf y}_n,k}^{'})
=
\int_{0}^{1}
\tra({\bf S}_{x}^{-1}(f) {\bf S}_{x,k}^{'}(f))df,
\end{equation}
\begin{eqnarray}
\nonumber
\lim_{n \rightarrow \infty}\frac{1}{n}
\tra(\boldsymbol{\Sigma}_{{\bf y}_n}^{-1}\boldsymbol{\Sigma}_{{\bf y}_n,k}^{'}\boldsymbol{\Sigma}_{{\bf y}_n}^{-1}
\boldsymbol{\Sigma}_{{\bf y}_n,\ell}^{'})
&=&
\label{eq: matrice variance limit 2}
\\
&&
\hspace{-4.5cm}
\int_{0}^{1}
\tra({\bf S}_{x}^{-1}(f) {\bf S}_{x,k}^{'}(f) {\bf S}_{x}^{-1}(f){\bf S}_{x,\ell}^{'}(f))df.
\end{eqnarray}
\end{result}
To quantify the rate of convergence of the limits in \eqref{eq: matrice mean limit}, \eqref{eq: matrice variance limit 1} and \eqref{eq: matrice variance limit 2}, we now consider the special case of  banded block-Toeplitz structured covariance
$\boldsymbol{\Sigma}_{\mathbf{y}_n}$ for which the following result is proven in the Appendix
\ref{Proof of Result Result banded Toeplitz}:
\begin{result}
\label{Result banded Toeplitz}
For  banded block-Toeplitz structured covariance $\boldsymbol{\Sigma}_{{\bf y}_n}$ satisfying ${\bf R}_x(k)={\bf 0}$ for $|k|>q$, 
the following upper bounds, which depend on ${\boldsymbol \theta}$, hold:
\begin{eqnarray}
\nonumber
|\frac{1}{n}{\boldsymbol \mu'}^T_{{\bf y}_n,k}\ \boldsymbol{\Sigma}_{{\bf y}_n}^{-1}{\boldsymbol \mu}'_{{{\bf y}_n},\ell}
-
{\boldsymbol \mu'}^T_{{\bf x}_k}{\bf S}_{x}^{-1}(0){\boldsymbol \mu}'_{{\bf x}_\ell}|
&&
\\
\label{eq:Result banded Toeplitz1}
&&
\hspace{-5cm}
\le\frac{h(q)c_rM}{\sqrt{n}}    \| {\bf S}_x^{-1}(0)\|_2\|{\boldsymbol \mu}'_{{\bf x}_k}\|_2\|{\boldsymbol \mu}'_{{\bf x}_\ell}\|_2,
\end{eqnarray}
where $h(q)\pardef \sqrt{q(q+1)(2q+1)/3}$.
\begin{eqnarray}
\nonumber
|\frac{1}{n}\tra(\boldsymbol{\Sigma}_{{\bf y}_n}^{-1}\boldsymbol{\Sigma}_{{\bf y}_n,k}^{'})
-
\int_{0}^{1}
\tra({\bf S}_{x}^{-1}(f) {\bf S}_{x,k}^{'}(f))df|
\\
\label{eq:Result banded Toeplitz2}
&&
\hspace{-5.5cm}
\le \frac{2q^2m c_r(M\frac{c'_k}{c_1}+M'_k)}{n}+\frac{\delta_{n,S,S_k}}{n}
\end{eqnarray}
where $\frac{\delta_{n,S,S_k}}{n}$ bounds the Riemann sum approximation error:
$|\frac{1}{n}
\tra\{{\rm Diag}
({\bf S}_{x,n}^{-1} {\bf S}_{x,n,k}^{'})\}
-\int_{0}^{1}
\tra({\bf S}_{x}^{-1}(f) {\bf S}_{x,k}^{'}(f))df|$.
\end{result}
Following the same derivation steps as above, the covariance-related term $\frac{1}{n} \mathrm{tr}(\boldsymbol{\Sigma}_{\mathbf{y}_n}^{-1} \boldsymbol{\Sigma}_{\mathbf{y}_n,k}' \boldsymbol{\Sigma}_{\mathbf{y}_n}^{-1} \boldsymbol{\Sigma}_{\mathbf{y}_n,\ell}')$, exhibits an
 analogous convergence rate $\mathcal{O}(n^{-1})$ to that of 
 $\frac{1}{n} \mathrm{tr}(\boldsymbol{\Sigma}_{\mathbf{y}_n}^{-1} \boldsymbol{\Sigma}_{\mathbf{y}_n,k}')$.

Result~\ref{Result banded Toeplitz} establishes distinct convergence rates for the two classes of matrix terms appearing in the Slepian-Bangs formula: 
the mean-related term converges at rate $\mathcal{O}(n^{-1/2})$, whereas the two covariance-related terms converge faster, at rate $\mathcal{O}(n^{-1})$. 
Moreover, the bounds in~\eqref{eq:Result banded Toeplitz1} and~\eqref{eq:Result banded Toeplitz2} reveal that the correlation length $q$ plays a critical role in the speed of convergence: the shorter the correlation length $q$, the faster the convergence of both sequences.
This quantitative result provides rigorous theoretical justification for the widely used approximate eigenvalue decomposition
\%
\begin{equation}
\label{eq:EVD}
\mathbf{W}_{n,m}^H \boldsymbol{\Sigma}_{\mathbf{y}_n} \mathbf{W}_{n,m} \approx \mathrm{Diag}(\mathbf{S}_{x,n}),
\end{equation}
which was previously employed heuristically in the literature (see, e.g.,~\cite[Eq.~(9)]{Zeira1990}, \cite[Eq.~(5)]{Porat1995}, and~\cite[p.~186]{Therrien1992}) without formal error bounds. Our analysis confirms that this approximation becomes increasingly accurate as $n \gg q$, with explicit error decay rates.
%
%%%%%%%%%%%%%%%%%%%%%%%%%%%%%%%%%%%%%%%
\subsection{Asymptotic properties of the sequence of coefficients $a_{i,mn}$}
\label{sec:Asymptotic properties of the sequence of coefficients}
%%%%%%%%%%%%%%%%%%%%%%%%%%%%%%%%%
%
From the previous subsection, the asymptotic behavior of ${\rm FIM}_{{\bf y}_n}(\boldsymbol{\theta})$
hinges on the limits of the three sequences $a_{0,mn}$, $a_{1,mn}$, and $n a_{2,mn}$, for which we prove the following result.
\begin{result}
\label{limit of the three coefficients}
For an arbitrary distribution of the texture r.v. $\tau$ whether continuous, discrete, or mixed, subject to the
%potentially containing both a discrete component and a component that is absolutely continuous with respect to the Lebesgue measure) 
  constraint $\e(\tau)=1$, the sequences of coefficients $a_{i,mn}$, $i=0,1,2$ satisfy the following limits:
\begin{eqnarray}
\label{eq:lim a0}
\lim_{n\rightarrow \infty}a_{0,mn}
&=&
\e(\tau^{-1}), 
\\
\label{eq:lim a1}
\lim_{n\rightarrow \infty}a_{1,mn}
&=&
\frac{1}{2},
\\
\label{eq:lim a2}
\lim_{n\rightarrow \infty}n a_{2,mn}^{\rm Clas}
&=&
-\frac{(1-\beta)}{2m}\le 0,
\\
\label{eq:lim a2 SePa}
\lim_{n\rightarrow \infty}n a_{2,mn}^{\rm SePa}
&=&
\left\lbrace
\begin{array}{ll}
0& \mbox{if}\ \tau=1\  \mbox{(Gaussian case)}\\
-\frac{1}{2m}& \mbox{otherwise}\\
\end{array}
\right.
\end{eqnarray}
where
$\beta$ is the probability of the discrete component of the distribution of $\tau$ (i.e., $\beta \pardef \sum_{t;P(\tau=t)\ne 0}P(\tau=t)$.
\end{result}
{\it Proof:}
\eqref{eq:lim a1} follows directly from
\eqref{eq:a0a1} and \eqref{eq:inequalities xi2}.
\eqref{eq:lim a2 SePa} follows from 
$na_{2,mn}^{\rm SePa}=
\frac{n}{\sigma^2_{\mathcal{Q}_{mn}}}- \frac{1}{2m}\xi_{2,mn}$
\eqref{eq:a2clsepa} with \eqref{eq:inequalities xi2} and 
$\frac{n}{\sigma^2_{\mathcal{Q}_{mn}}}=\frac{1}{n[\e(\tau^2)-1]+2\e(\tau^2)}$ derived from \eqref{eq:Stochastic representation CG b}
where $\e(\tau^2)=1$ i.f.f. $\tau=1$ from the Cauchy-Schwarz inequality.
In contrast, the proofs for \eqref{eq:lim a0} and \eqref{eq:lim a2} are significantly more involved and are provided in the Appendix.
\hfill
\QED
%
%%%%%%%%%%%%%%%%%%%%%%%%%%%%%%%%%
%%%%%%%%%%%%%%%%%%%%%%%%%%%%%%%%%%%%%%%
\subsection{Extension of Whittle's formula}
\label{sec:Extension of Whittle's formula}
%%%%%%%%%%%%%%%%%%%%%%%%%%%%%%%%%
%
From the previous two subsections, we can now state our main result:
\begin{result}
\label{main result}
Under the assumptions stated in Section~\ref{sec:Preliminaries on multidimensional stationary SIRP}, the asymptotic FIM rate for a multidimensional stationary SIRP with arbitrary mean, has the following expression:
\begin{eqnarray}
\nonumber
\lim_{n \rightarrow \infty}
\frac{1}{n}\left({\rm FIM}_{y_n}(\boldsymbol{\theta})\right)_{k,\ell}
&=&
\e\left(\tau^{-1}\right) {\boldsymbol \mu}^{'T}_{x,k}
{\bf S}_{x}^{-1}(0)
{\boldsymbol \mu}_{x,\ell}'
\\
\nonumber
&&
\hspace{-2.5cm}
+\frac{1}{2}\int_{0}^{1}\tra({\bf S}_{x}^{-1}(f) {\bf S}_{x,k}^{'}(f) {\bf S}_{x}^{-1}(f){\bf S}_{x,\ell}^{'}(f))df
\\
\nonumber
&&
\hspace{-2.5cm}
{+c}
\left(\int_{0}^{1}\tra({\bf S}_{x}^{-1}(f) {\bf S}_{x,k}^{'}(f))df\right)
\\
\label{eq:Whittle's formula}
&&
\hspace{-1cm}
\left(\int_{0}^{1}\tra({\bf S}_{x}^{-1}(f) {\bf S}_{x,\ell}^{'}(f))df\right),
\end{eqnarray}
where $c$ is given by the limit in \eqref{eq:lim a2} or \eqref{eq:lim a2 SePa}
depending on whether the density generator $g$ is known or unknown, respectively.
\end{result}
Several remarks on Result~\ref{main result} are in order:
\begin{enumerate}
\item {\it Recovery of Whittle's formula.}  
For zero mean stationary GPs, which correspond to the degenerate texture $\tau = 1$, 
we recover, via a novel methodological framework, the classical Whittle formula originally derived in~\cite{Whittle1953} 
or zero mean purely non-deterministic stationary GPs. 
Moreover, our analysis shows that this formula remains valid for any mixture of zero mean Gaussian distributions with proportional covariance matrices, for which the texture r.v. $\tau$ is discrete distributed $(\beta=\gamma=1)$.
This includes, in particular, $\varepsilon$-contaminated Gaussian models.
\item {\it Generalization beyond Gaussianity.}  
For arbitrary zero-mean stationary CGPs whose texture $\tau$ distribution possesses an absolutely continuous component, 
Result~\ref{main result} strictly generalizes Whittle's formula by introducing an additive correcting factor.
This factor reduces the asymptotic FIM rate and depends solely on the probability mass of the distribution of  $\tau$.
\item {\it Extension of Whittle's formula to non-zero mean GPs.}  
For stationary GPs with an arbitrary parameterized mean, Result~\ref{main result} extends Whittle's classical formula by the additive mean-related term ${\boldsymbol \mu}^{'T}_{x,k}
{\bf S}_{x}^{-1}(0){\boldsymbol \mu}_{x,\ell}'$. This aligns with the asymptotic behavior of the maximum likelihood estimator of the mean: the sample mean $\widehat{\boldsymbol \mu}_{x,n}=\frac{1}{n}\sum_{k=1}^n {\bf x}_k$ satisfies~\cite[Th.~7.1.1]{Brockwell1990}
 $\lim_{n\rightarrow \infty}n\cov(\widehat{\boldsymbol \mu}_{x,n})=\sum_k{\bf R}_x(k)={\bf S}_x(0)$.
\item {\it Enhanced information in compound Gaussian models.}  
For stationary CGPs with parameterized mean, the ML estimator of the mean is generally not the sample mean. 
The coefficient of the mean-related term becomes $\e(1/\tau)$. Under the normalization $\e(\tau) = 1$, Jensen's inequality yields
$\e\!\left(\tau^{-1}\right) \geq \frac{1}{\e(\tau)} = 1$,
with equality i.f.f. $\tau = 1$  (i.e., the Gaussian case). Hence, the asymptotic FIM for CGPs is strictly larger than for GPs when the texture is non-degenerate, reflecting greater information about the mean due to the hierarchical structure.
\item {\it Whittle's formula for i.i.d.\ CG r.v.}
For i.i.d.\ CG r.v. $({\bf x}_k)_{k=1}^n$
with common mean $\boldsymbol{\mu}_x$ and covariance
${\bf R}_{x}$, the exact FIM rate obtained from the
Slepian-Bangs formula~\eqref{eq:Slepian-Bangs's formula}
is given by
\begin{eqnarray}
\nonumber
\frac{1}{n}\left({\rm FIM}_{{\bf y}_n}(\boldsymbol{\theta})\right)_{k,\ell}
&=&
a_{0,m}\, {\boldsymbol \mu}^{\prime T}_{x,k}
{\bf R}_{x}^{-1}
{\boldsymbol \mu}_{x,\ell}'
\\
\nonumber
&&
\hspace{-1.7cm}
+a_{1,m}\,\mathrm{tr}({\bf R}_{x}^{-1} {\bf R}_{x,k}^{'}
{\bf R}_{x}^{-1}{\bf R}_{x,\ell}^{'})
\\
\label{eq:Whittle iid ES}
&&
\hspace{-1.7cm}
+a_{2,m}\,\mathrm{tr}({\bf R}_{x}^{-1}{\bf R}_{x,k}^{'})
\,\mathrm{tr}({\bf R}_{x}^{-1} {\bf R}_{x,\ell}^{'}) .
\end{eqnarray}
However, comparing \eqref{eq:Whittle iid ES} with the
asymptotic FIM rate in~\eqref{eq:Whittle's formula}, where
the spectrum of ${\bf x}_k$ is flat, i.e.,
${\bf S}_{x}(f)={\bf R}_{x}$, reveals an apparent
inconsistency. This inconsistency is resolved by noting
that the stacked vector
${\bf y}_n = ({\bf x}_1^T, \dots, {\bf x}_n^T)^T$
is not CG unless the underlying distribution is Gaussian,
in which case both expressions coincide.
\item {\it A specific parameterization of
$({\boldsymbol \mu}_x, ({\bf R}_{x}(k))_{k \in \mathbb{Z}})$.}
Consider the particular case in which ${\boldsymbol \mu}_x$
and $({\bf R}_{x}(k))_{k \in \mathbb{Z}}$ share no common
parameter. Specifically, let ${\boldsymbol \mu}_x$ be
parameterized by $\boldsymbol{\theta}_1$, and let
${\bf R}_{x}(k) = \theta_2\,{\bf R}(k,\boldsymbol{\theta}_3)$,
where $\theta_2$ is a power factor and
$\boldsymbol{\theta}_3$ is a shape factor.
In this case, the Slepian--Bangs
formula~\eqref{eq:Slepian-Bangs's formula} immediately
implies that $\boldsymbol{\theta}_1$ and
$(\theta_2,\boldsymbol{\theta}_3)$ are decoupled in
${\rm FIM}_{{\bf y}_n}$. Moreover,
${\rm CRB}_{{\bf y}_n}(\boldsymbol{\theta}_1)$,
${\rm CRB}_{{\bf y}_n}(\theta_2)$, and
${\rm CRB}_{{\bf y}_n}(\boldsymbol{\theta}_3)$ are
proportional to $\theta_2$, proportional to $\theta_2^2$,
and independent of $\theta_2$, respectively.
In particular, for an arbitrary distribution of the texture
$\tau$, we have
\begin{equation}
\label{CRB theta1}
\lim_{n \rightarrow \infty}\, n\,{\rm CRB}_{{\bf
y}_n}(\boldsymbol{\theta}_1)
=
\frac{\theta_2}{\e\!\left(\tau^{-1}\right)}
\!\left(\frac{\partial {\boldsymbol \mu}_{x}^T}
{\partial {\boldsymbol \theta}_1^T}
{\bf S}^{-1}(0,\boldsymbol{\theta}_3)
\frac{\partial {\boldsymbol \mu}_{x}}
{\partial {\boldsymbol \theta}_1^T}\!\right)^{-1},
\end{equation}
where ${\bf S}(f,\boldsymbol{\theta}_3) \pardef
\sum_k{\bf R}(k,\boldsymbol{\theta}_3)\,e^{-i2\pi kf}$.
Furthermore, it is straightforward to show that
\begin{eqnarray}
\nonumber
&&
\hspace{-1cm}
\int_{0}^{1}\tra\!\left({\bf S}_{x}^{-1}(f)
{\bf S}_{x,k}^{'}(f) {\bf S}_{x}^{-1}(f)
{\bf S}_{x,\ell}^{'}(f)\right)df
\\
\nonumber
&&
=\frac{1}{m}
\left(\int_{0}^{1}\tra\!\left({\bf S}_{x}^{-1}(f)
{\bf S}_{x,k}^{'}(f)\right)df\right)
\\
\label{eq:Whittle's formula b}
&&
\hspace{1cm}
\times\left(\int_{0}^{1}\tra\!\left({\bf S}_{x}^{-1}(f)
{\bf S}_{x,\ell}^{'}(f)\right)df\right),
\end{eqnarray}
when $k$ indexes the parameter $\theta_2$.
Consequently, applying \eqref{eq:Whittle's formula} to a
purely continuous ($\beta=\gamma=0$) distribution of the
texture $\tau$, the asymptotic FIM
rate~\eqref{eq:Whittle's formula} exhibits the block
structure
$\begin{bmatrix}
\times & {\bf 0} & {\bf 0}\\
{\bf 0}^T & 0 & {\bf 0}^T\\
{\bf 0} & {\bf 0} & \times
\end{bmatrix}$,
and hence no general conclusion can be drawn regarding the
asymptotic behavior of
${\rm CRB}_{{\bf y}_n}({\theta}_2)$ and
${\rm CRB}_{{\bf y}_n}(\boldsymbol{\theta}_3)$ as
$n \to \infty$.
An example illustrating this behavior is provided in
Section~\ref{sec:Numerical illustrations}.
\item  {\it Practical accuracy of the asymptotic CRB approximation.}  
The numerical accuracy of the CRB derived from the asymptotic expression~\eqref{eq:Whittle's formula} depends not only on the sample size $n$, but also on the spectral characteristics of the process  ${\bf x}_k$ and the distribution of $\tau$. In practice, however, the CRB values obtained from~\eqref{eq:Whittle's formula} closely match those computed using the Slepian-Bangs formula~\eqref{eq:Slepian-Bangs's formula}, even for moderate or small $n$, particularly when the spectrum is sharply peaked (i.e., highly correlated processes). This empirical agreement will be illustrated in Section~\ref{sec:Numerical illustrations}.
\end{enumerate}
%
%%%%%%%%%%%%%%%%%%%%%%%%%%%%%%%%%%%%%%%%%%%%%%%%%%%%%%%%
\section{Numerical Illustrations}
\label{sec:Numerical illustrations}
%%%%%%%%%%%%%%%%%%%%%%%%%%%%%%%%%%%%%%%%%
%
To illustrate Result~\ref{main result}, we consider a scalar
nonzero-mean AR(1) process $(x_k)_{k \in \mathbb{Z}}$, for which
$R_x(k) = \sigma_x^2\, a^{|k|}$
and
$S_x(f) =
\frac{\sigma_x^2(1-a^2)}
{\left|1 - a\,e^{-i2\pi f}\right|^2}$,
with $|a| < 1$.
The unknown parameter vector, which satisfies the condition
stated in Remark~6 of Result~\ref{main result}, is
$\boldsymbol{\theta} = (\mu_x, \sigma_x^2, a)^T$.

Substituting a closed-form expression for
$\boldsymbol{\Sigma}_{{\bf y}_n}^{-1}$ into the
Slepian--Bangs formula~\eqref{eq:Slepian-Bangs's formula},
we obtain
\begin{equation}
\label{eq;FIM mu}
{\rm FIM}_{{\bf y}_n}(\mu_x)
=n\,\frac{a_{0,n}}{\sigma_x^2}
\left(\frac{1-a}{1+a}
+ \frac{2a}{n(1+a)}\right),
\end{equation}
and, using symbolic computation,
\begin{eqnarray}
\nonumber
{\rm FIM}_{{\bf y}_n}(\sigma_x^2,a)=
&&
\\
\label{eq:FIM sigma a}
&&
\hspace{-3.7cm}
\begin{bmatrix}
\!\frac{n}{\sigma^4_{x}}(a_{1,n}+n a_{2,n}) & -\frac{2a(n-1)}{\sigma^2_x(1-a^2)}(a_{1,n}+n a_{2,n})\!\\
\!-\frac{2a(n-1)}{\sigma^2_x(1-a^2)}(a_{1,n}\!+\! n a_{2,n}) & a_{1,n}\frac{2(n\!-\!1)(1+a^2)}{(1-a^2)^2}\!+\!a_{2,n}\frac{4a^2(n\!-\!1)^2}{(1-a^2)^2}\!
\end{bmatrix}.
\end{eqnarray}

As a preliminary consistency check, noting that
$S_x(0) = \sigma_x^2\!\left(\frac{1+a}{1-a}\right)$,
we see that \eqref{eq;FIM mu} is consistent
with~\eqref{CRB theta1}
and gives: 
\begin{equation}
\label{eq:CRB mu CG}
\lim_{n \rightarrow \infty}
n\,{\rm CRB}_{{\bf y}_n}(\mu_x)
=
\frac{\sigma_x^2}{\e(\tau^{-1})}
\left(\frac{1+a}{1-a}\right).
\end{equation}
Furthermore, \eqref{eq:FIM sigma a} yields:
\begin{eqnarray}
\nonumber
{\rm CRB}_{{\bf y}_n}(\sigma_x^2)
&&
\\
\label{eq:CRB sigma}
&&
\hspace{-2cm}
=\frac{\sigma_x^4}{n}
\left[
\frac{(1+a^2)\,a_{1,n}
+ 2a^2\frac{(n-1)}{n}\,n\,a_{2,n}}
{a_{1,n}(a_{1,n}+n\,a_{2,n})
\left(1-a^2+\frac{2a^2}{n}\right)}
\right],
\\[6pt]
\label{eq:CRB a}
{\rm CRB}_{{\bf y}_n}(a)
&=&
\frac{(1-a^2)^2}
{2(n-1)\,a_{1,n}
\left(1-a^2+\frac{2a^2}{n}\right)}.
\end{eqnarray}

Using the asymptotic properties of the sequence $a_{1,n}$ established in
Result~\ref{limit of the three coefficients}, we obtain, for
an arbitrary distribution of the texture $\tau$,
\begin{equation}
\label{eq;CRB a bis}
\lim_{n \rightarrow \infty}
n\,{\rm CRB}_{{\bf y}_n}(a)
= 1-a^2.
\end{equation}

In contrast, the asymptotic behavior of
${\rm CRB}_{{\bf y}_n}(\sigma_x^2)$ depends on the
distribution of $\tau$, and must be analyzed
separately for discrete and continuous components.

When this distribution has a discrete component
($\beta \neq 0$), \eqref{eq:CRB sigma} yields
\begin{equation}
\label{eq:CRB sigma b}
\lim_{n \rightarrow \infty}
n\,{\rm CRB}_{{\bf y}_n}^{\rm Clas}(\sigma_x^2)
=
\frac{2\sigma_x^4[1-a^2(1-2\beta)]}
{(1-a^2)\,\beta}.
\end{equation}
In particular, for Gaussian processes
$(x_k)_{k \in \mathbb{Z}}$ (i.e., $\tau=1$ and thus $\beta = 1$), \eqref{eq:CRB sigma b}
reduces to
\begin{equation}
\lim_{n \rightarrow \infty}
n{\rm CRB}_{{\bf y}_n}^{\rm Clas}(\sigma_x^2)
=
\lim_{n \rightarrow \infty}
n{\rm CRB}_{{\bf y}_n}^{\rm SePa}(\sigma_x^2)
=
\frac{2\sigma_x^4(1+a^2)}{1-a^2},
\end{equation}
confirming that the classical and semiparametric bounds coincide in the Gaussian case.

When the distribution of $\tau$ is purely continuous (for
known $g$), or is not reduced to the degenerate case
$\tau = 1$ (for unknown $g$), Result~\ref{limit of the three coefficients}  yields
$\lim_{n \rightarrow \infty}(a_{1,n} + n\,a_{2,n}) = 0$. Consequently from \eqref{eq:CRB sigma}, the asymptotic behavior of ${\rm CRB}_{{\bf y}_n}(\sigma_x^2)$ cannot be determined from the limiting FIM alone and must be examined on a case-by-case basis, depending on the rate at which $a_{1,n} + n\,a_{2,n}$ converges to zero.
 As an illustrative example, consider a Student's-$t$ process
$(x_k)_{k \in \mathbb{Z}}$ with $\nu > 2$ degrees of
freedom.  In this case, 
$n(a_{1,n} + n\,a_{2,n})
= \frac{n\,\nu}{2(n+\nu+2)}$,
and thus \eqref{eq:CRB sigma} yields
\begin{equation}
\label{eq:CRB sigma c}
\lim_{n \rightarrow \infty}
{\rm CRB}_{{\bf y}_n}^{\rm Clas}(\sigma_x^2)
=
\lim_{n \rightarrow \infty}
{\rm CRB}_{{\bf y}_n}^{\rm SePa}(\sigma_x^2)
=
\frac{2\sigma_x^4}{\nu}.
\end{equation}

In the following, all the figures concern a Student's-$t$
process.
Figs.~1 and~2 compare the exact value
$n\!\left(a_{0,n}\,\mathds{1}_{n}^T 
\boldsymbol{\Sigma}_{{\bf y}_n}^{-1}
\mathds{1}_{n}\right)^{-1}$
of $n\,{\rm CRB}_{{\bf y}_n}(\mu_x)$ with the asymptotic
limit \eqref{eq:CRB mu CG} evaluated with $\e(\tau^{-1})=\frac{\nu}{\nu-2}$
as a function of $n$, for different values of $\nu$ and $a$, respectively.
These figures show that the asymptotic limits are reached
rapidly, typically for sample sizes smaller than $100$. In Fig.~1, the limit in  \eqref{eq:CRB mu CG} is reached more rapidly as the
distribution moves away from the Gaussian case ($\nu \gg 1$). Fig.~2 shows that this limit, which is an increasing function
of $a\in(-1,1)$, is approached less rapidly as $a$ tends to $1$
(long-range positive correlation); for $a=0.95$, convergence is
still not complete at $n=10^3$.
%
%%%%%%%%%%%%%%%%%%%%%%%%%%%%%%%%%%%%%%%%%%%%%
\begin{figure}[htbp]
\centering
\includegraphics[width=0.4\textwidth]{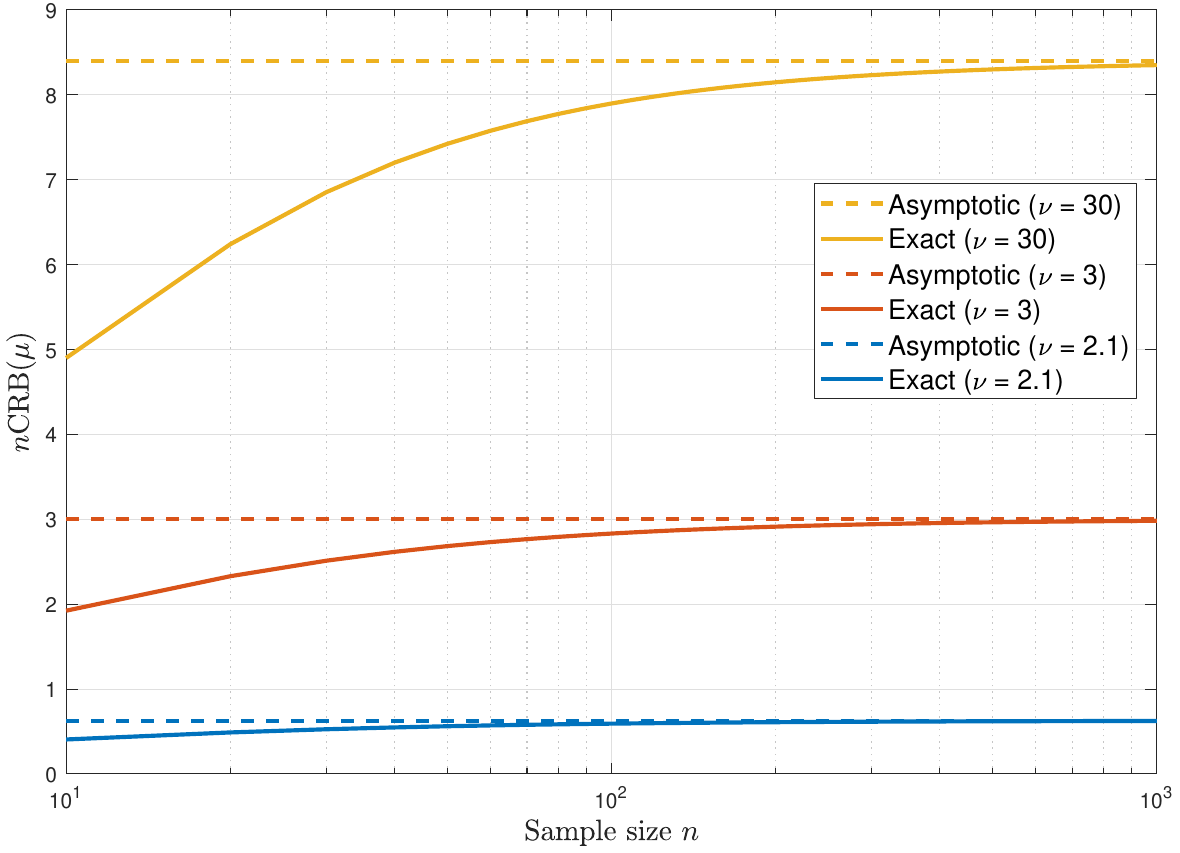}
\caption{$n{\rm CRB}_{y_n}(\mu_x)$ as a function of $n$ for different values of $\nu$, with $a=0.8$ and $\sigma_x^2=1$.}
\label{fig:Fig1}
\end{figure}
 \begin{figure}[htbp]
\centering
\includegraphics[width=0.4\textwidth]{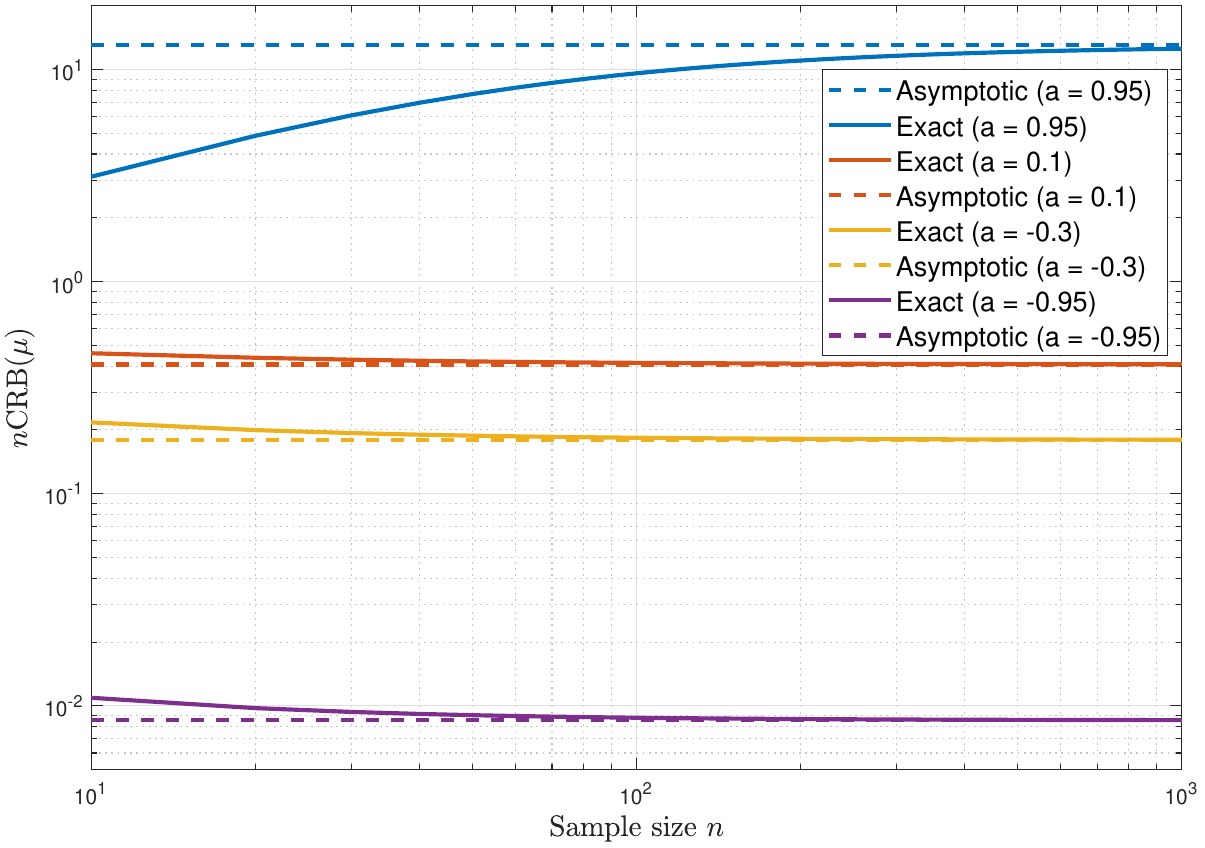}
\caption{$n{\rm CRB}_{y_n}(\mu_x)$ as a function of $n$ for different values of $a$, with $\nu=3$ and $\sigma_x^2=1$.}
\label{fig:Fig2}
\end{figure}
Finally, Figs.~3 and~4 show the  behavior of ${\rm CRB}_{y_n}(\sigma_x^2)$ as a  function of $n$ for different value of $\nu$ and $a$, respectively.

\begin{figure}[htbp]
\centering
\includegraphics[width=0.4\textwidth]{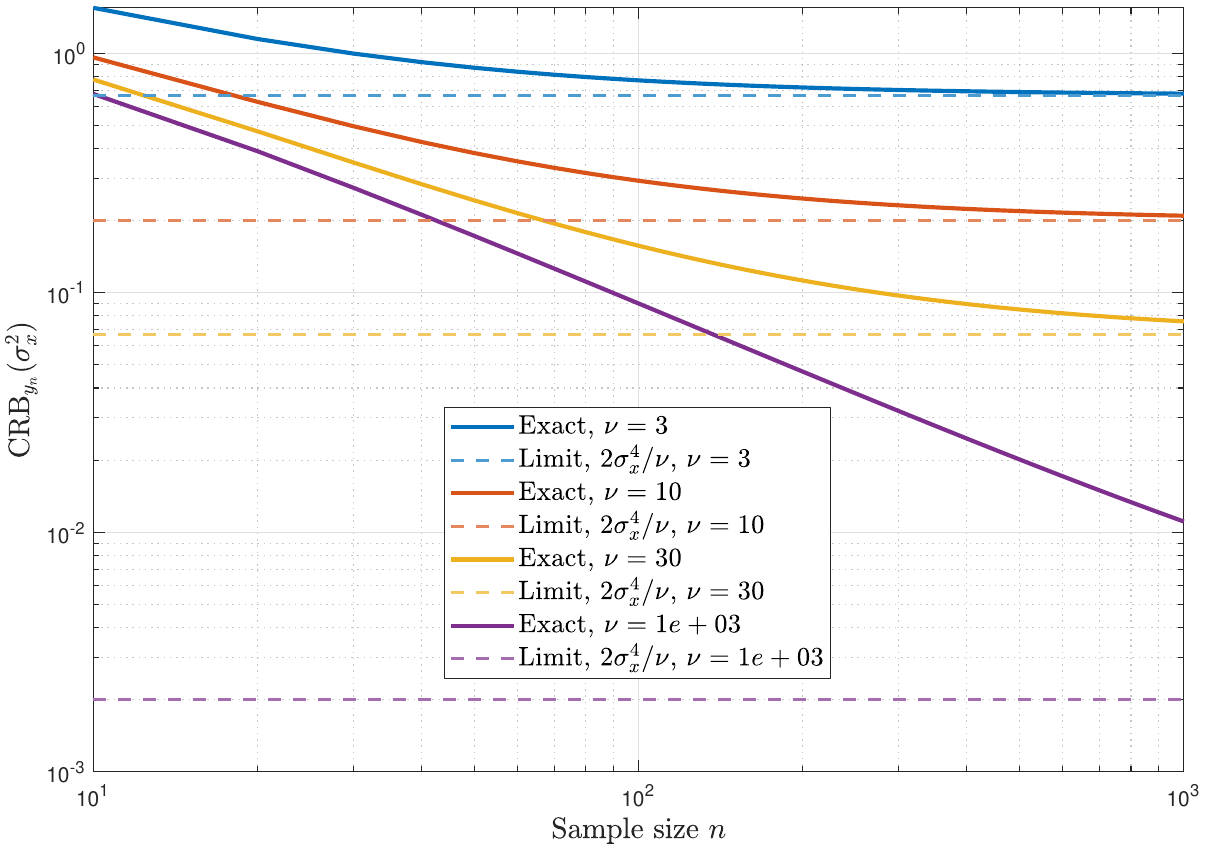}
\caption{${\rm CRB}_{y_n}(\sigma_x^2)$ as a function of $n$ for different values of $\nu$, with $a=0.8$ and $\sigma_x^2=1$.}
\label{fig:Fig3}
\end{figure}

In Fig.~3, the closer the distribution of
$(x_k)_{k\in\mathbb{Z}}$ is to Gaussian, the lower the floor of
${\rm CRB}_{{\bf y}_n}(\sigma_x^2)$; as $\nu$ increases this
floor decreases and, in the Gaussian limit, vanishes altogether,
leaving the standard $1/n$ decay.
This provides another example in which the Gaussian assumption
does not yield the largest CRB \cite{Delmas2026}.
In Fig.~4, ${\rm CRB}_{{\bf y}_n}(\sigma_x^2)$ depends on $a$
only through $a^2$ (hence not on its sign), and converges to the
$a$-independent limit \eqref{eq:CRB sigma c}, which is reached
more quickly as $a$ approaches zero. Fig.~3 shows that the closer the distribution of
$(x_k)_{k\in\mathbb{Z}}$ is to Gaussian (i.e., the larger
$\nu$), the lower the floor $2\sigma_x^4/\nu$ of
${\rm CRB}_{{\bf y}_n}(\sigma_x^2)$; in the Gaussian limit this
floor vanishes altogether, and the standard $1/n$ decay is
recovered.
 \begin{figure}[htbp]
\centering
\includegraphics[width=0.4\textwidth]{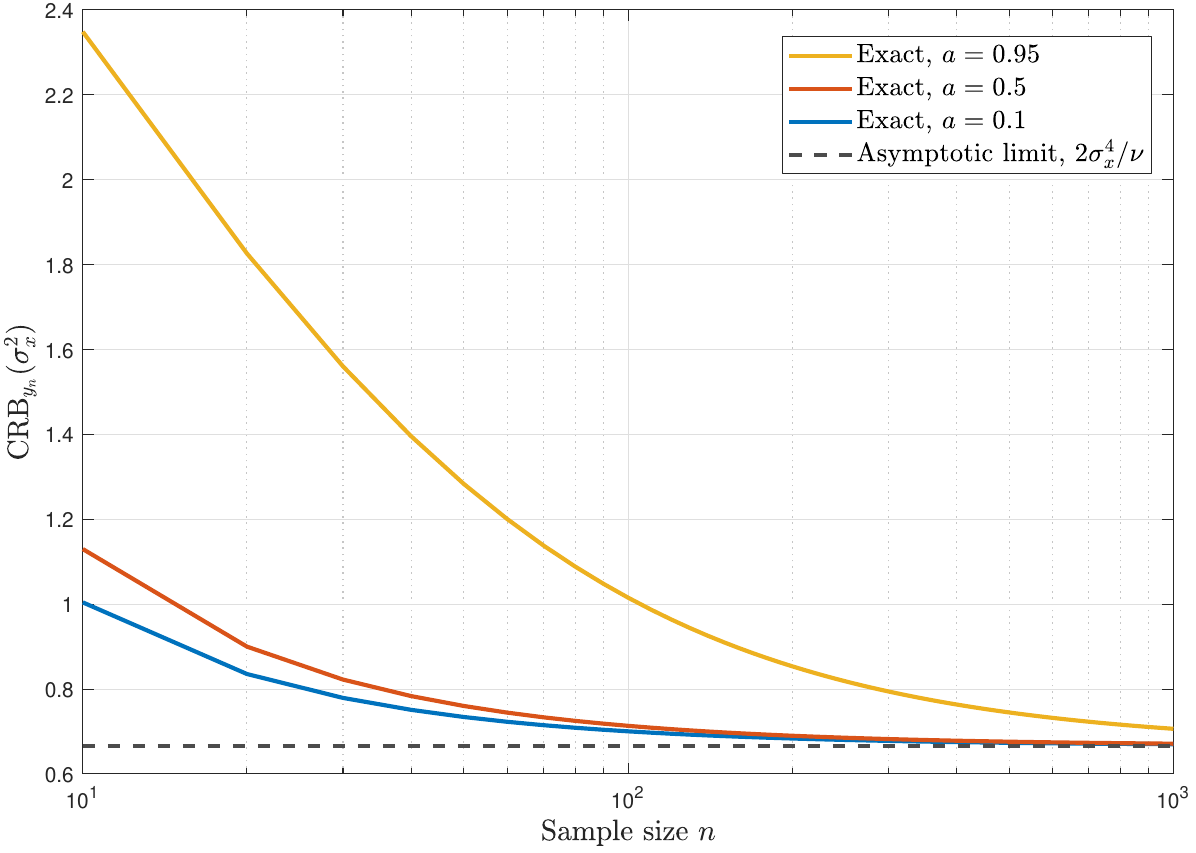}
\caption{${\rm CRB}_{y_n}(\sigma_x^2)$ as a function of $n$ for different values of $a$, with $\nu=3$ and $\sigma_x^2=1$.}
\label{fig:Fig4}
\end{figure}
Fig.~4 further shows that ${\rm CRB}_{{\bf y}_n}(\sigma_x^2)$ converges to the
$a$-independent limit \eqref{eq:CRB sigma c}, which is reached
more quickly as $a$ approaches zero.
%
%%%%%%%%%%%%%%%%%%%%%%%%%%%%%%%%%%%%%%%%%%%%%%%%%%%%%%
%%%%%%%%%%%%%%%%%%%%%%%%%%%%%%%%%%%%%%%%%%%
\section{Conclusion}
\label{sec:Conclusion}
%%%%%%%%%%%%%%%%%%%%%%%%%%
%
This paper has established the asymptotic limit of the FIM rate for multidimensional, real-valued, stationary compound Gaussian processes (CGPs) with arbitrary mean.  The result constitutes a generalization of Whittle's formula, derived by combining the Slepian-Bangs expression with asymptotic theory for block Toeplitz matrices. Importantly, the proposed framework unifies three statistical settings, fully known, parameterized, and completely unknown CG-dependent density generators, while providing explicit analytical expressions for all correction terms with respect to Whittle's formula. In addition, a semiparametric formulation is developed for CG distributions, extending the general semiparametric FIM framework to the CG setting and quantifying the performance loss due to the unknown distribution.
Current research efforts are focused on extending this asymptotic framework to complex-valued stationary and cyclostationary multivariate CGPs, as well as on applying the derived FIM to practical parameter estimation problems, particularly direction-of-arrival and impulse-response estimation.
%
%%%%%%%%%%%%%%%%%%%%%%%%%%%%%%%%%%%%%%%%%%%%%%%
%\vspace{-5cm}
%%%%%%%%%%%%%%%%%%%%%%%%%%%%%%%%%%%%%%%%%%%%%%%%%%%%%%%%%%%%ùù
\section*{Appendix}
\label{sec:Appendix}
%%%%%%%%%%%%%%%%%%%%%%%%%%%%%%%%%%%%%%%%%%%%%%%%%%
%%%%%%%%%%%%%%%%%%%%%%%%%%%%%%%%%%%%%%%%%%%%%%%%%%
%%%%%%%%%%%%%%%%%%%%%%%%%%%%%%%%%%%%%%%%%%%%%%%%%%
\subsection{Proof of Result \ref{property FIM inequality}}
\label{Proof of Result property FIM inequality}
%%%%%%%%%%%%%%%%%%%%%%%%%%%%%%%%%%%%%%%%%ù
%
From \eqref{eq:a2class} and \eqref{eq:a2clpa}, the inequality \eqref{eq:inequality a2 clas sepa} is equivalent to 
$\xi_{2,mn}\ge \frac{mn}{mn+2}\left(1+\frac{4}{\sigma^2_Q}\right)$ and from \eqref{eq:E(Qphi)2}
and \eqref{eq:e e}, we get:
\begin{equation}
\label{eq:xi2 class separ}
\xi_{2,mn}
=
\frac{mn}{mn+2}\left(1+\frac{\var(\e[\chi^2_{mn}|\mathcal{Q}_{mn}])}{(mn)^2}\right).
\end{equation}
Then from the Cauchy-Schwarz inequality, we obtain:
\begin{equation}
\label{eq:Cauchy-Schwarz 1}
\{\cov(\e[\chi^2_{mn}|\mathcal{Q}_{mn}],\!\mathcal{Q}_{mn})\}^2
\!\!\le\!\!
\var(\e[\chi^2_{mn}|\mathcal{Q}_{mn}])
\var(\mathcal{Q}_{mn}).
\end{equation}
Applying the identity\footnote{derived  from $\cov(\e[X|Y],Y)=\e(Y\e[X|Y])-\e(\e[X|Y])\e(Y)$
where $\e(Y\e[X|Y])=\e(\e[XY|Y])=\e(XY)$ and $\e(\e[X|Y])=\e(X)$.} 
$\cov(\e[X|Y],Y)=\cov(X,Y)$ 
with $X = \chi^2_{mn}$ and $Y = \mathcal{Q}_{mn}$, we have
\begin{eqnarray}
\nonumber
\cov(\e[\chi^2_{mn}|\mathcal{Q}_{mn}],\mathcal{Q}_{mn})
&=&
\cov(\chi^2_{mn},\mathcal{Q}_{mn})
\label{eq:cov}
\\
&&
\hspace{-4cm}
=
\e(\tau)\e([\chi^2_{mn}]^2)-\e(\chi^2_{mn})\e(\mathcal{Q}_{mn})
=2mn.
\end{eqnarray}
Consequently from \eqref{eq:Cauchy-Schwarz 1}, $\var(\e[\chi^2_{mn}|\mathcal{Q}_{mn}])\ge \frac{4(mn)^2}{\sigma_{\mathcal{Q}}^2}$ with equality i.f.f.
the r.v. $\e[\chi^2_{mn}|\mathcal{Q}_{mn}]-mn=\mathcal{Q}_{mn}\e(\tau^{-1}|\mathcal{Q}_{mn}]-mn$ and $\mathcal{Q}_{mn}-m$ are proportional, i.e., 
$\e(\tau^{-1}|\mathcal{Q}_{mn}]=1\Leftrightarrow\varphi(\mathcal{Q}_{mn})=1$.
The latter holds i.f.f. $g(t)=1/ (2\pi)^{mn/2}\exp(-t/2)$
which characterizes the Gaussian distribution. This completes the proof.
\hfill
\QED
%
%%%%%%%%%%%%%%%%%%%%%%%%%%%%%%%%%%%%%%%%%%%%%%%%%%
\subsection{Proof of Result \ref{Result matrice mean limit}}
\label{Proof of Result Result matrice mean limit}
%%%%%%%%%%%%%%%%%%%%%%%%%%%%%%%%%%%%%%%%%ù
%
Since the rules of matrix multiplication carry over to block-matrices, for each $i=1,\dots,n$ we have:
\begin{equation}
\label{eq:mean1}
[\boldsymbol{\Sigma}_{{\bf y}_n}
(\mathds{1}_{n}\otimes {\bf I}_{m})]_{[i,1]} =\sum_{k=-i+1}^{n-i}{\bf R}_x(k).
\end{equation}

%\HAT{Message pour Habti : attention ici $(k,\ell)$-th block est donné par  ${\bf R}_{x}(\ell-k)$ car 
%${\bf R}_{x}(k) \pardef \e[({\bf x}_{\ell}-{\boldsymbol \mu}_x)({\bf x}_{\ell+k}-{\boldsymbol \mu}_x)^T] \in \mathbb{R}^{m \times m}$ et donc la première ligne est  $({\bf R}_x(0),{\bf R}_x(1),..,{\bf R}_x(n\!-\!1))$, j'ai donc rétabli mes formules.}

\noindent
Comparing this matrix to the matrix
$[(\mathds{1}_{n}\otimes {\bf I}_{m}){\bf S}_x(0)]_{[i,1]}=[(\mathds{1}_{n}\otimes {\bf I}_{m})\sum_{k}{\bf R}_x(k)]_{[i,1]}=\sum_{k}{\bf R}_x(k)$, we derive:
\begin{eqnarray}
\nonumber
[\boldsymbol{\Sigma}_{{\bf y}_n}
(\mathds{1}_{n}\otimes {\bf I}_{m})
-(\mathds{1}_{n}\otimes {\bf I}_{m}){\bf S}_x(0)]_{[i,1]}
\\
\label{eq:mean2}
&&
\hspace{-5.7cm}
=
\sum_{k<-i+1}{\bf R}_x(k)+\sum_{k>n-i}{\bf R}_x(k)
\pardef [{\boldsymbol \Upsilon}_{n}]_{[i,1]}.
\end{eqnarray}
Using the identity   $\|{\bf A}\|_2^2=\|{\bf A}^T{\bf A}\|_2)$ and the definition of the spectral norm, we obtain
\begin{eqnarray}
\nonumber
\|\boldsymbol{\Sigma}_{{\bf y}_n} \frac{1}{\sqrt n}(\mathds{1}_{n}\otimes {\bf I}_{m})
- \frac{1}{\sqrt n}(\mathds{1}_{n}\otimes {\bf I}_{m}){\bf S}_x(0)\|_2^2
&&
\\
\label{eq:mean3}
&&
\hspace{-3.2cm}
\le \frac{1}{n} \sum_{i=1}^n \|[{\boldsymbol \Upsilon}_{n}]_{[i,1]}\|_2^2.
\end{eqnarray}
From \eqref{eq:mean2} and the triangle inequality,
\begin{equation}
\label{eq:mean4}
\|[{\boldsymbol \Upsilon}_{n}]_{[i,1]}\|_2
\le
\sum_{k<-i+1}\|{\bf R}_x(k)\|_2+\sum_{k>n-i}\|{\bf R}_x(k)\|_2.
\end{equation}
Then from the absolute summability of the sequence ${\bf R}_x(k)$, the sequence 
$\alpha_n \pardef  \sum_{k<-n}\|{\bf R}_x(k)\|_2)\|_2$ and 
$\beta_n \pardef \sum_{k>n}\|{\bf R}_x(k$ tend to $0$ when $n \rightarrow \infty$ and \eqref{eq:mean4} can be written equivalently as
$\|[{\boldsymbol \Upsilon}_{n}]_{[i,1]}\|_2 \le\alpha_{i-1}+ \beta_{n-i}$, 
which implies $\|[{\boldsymbol \Upsilon}_{n}]_{[i,1]}\|_2^2
\le 2\alpha_{i-1}^2+2 \beta_{n-i}^2$ and therefore:
\begin{equation}
\label{eq:mean5}
\frac{1}{n}\sum_{i=1}^n\|[{\boldsymbol \Upsilon}_{n}]_{[i,1]}\|_2^2
\le
\frac{2}{n}\sum_{i=0}^{n-1}\alpha_i^2
+\frac{2}{n}\sum_{i=0}^{n-1}\beta_i^2.
\end{equation}
Now applying the Ces\`{a}ro mean theorem to the sequences $\alpha_n^2$ and $\beta_n^2$, we have 
$\lim_{n \rightarrow \infty}\frac{1}{n}\sum_{i=0}^{n-1}\alpha_i^2=0$ and
$\lim_{n \rightarrow \infty}\frac{1}{n}\sum_{i=0}^{n-1}\beta_i^2=0$, and therefore from \eqref{eq:mean3}
we get:
\begin{equation}
\label{eq:mean6}
\lim_{n \rightarrow \infty}\|\boldsymbol{\Sigma}_{{\bf y}_n} \frac{1}{\sqrt n}(\mathds{1}_{n}\otimes {\bf I}_{m})
-\! \frac{1}{\sqrt n}(\mathds{1}_{n}\otimes {\bf I}_{m}){\bf S}_x(0)\|_2
\!=\!0,
\end{equation}
or in an equivalent way $\lim_{n \rightarrow \infty}\frac{1}{\sqrt{n}}\|{\boldsymbol \Upsilon}_{n}\|_2=0$ where
${\boldsymbol \Upsilon}_{n}\pardef -\left[\boldsymbol{\Sigma}_{{\bf y}_n} (\mathds{1}_{n}\otimes {\bf I}_{m})
- (\mathds{1}_{n}\otimes {\bf I}_{m}){\bf S}_x(0)\right]$.
This implies:
\begin{eqnarray}
\nonumber
\frac{1}{n}(\mathds{1}_{n}^T\otimes {\bf I}_{m})
\boldsymbol{\Sigma}_{{\bf y}_n}^{-1}
(\mathds{1}_{n}\otimes {\bf I}_{m})
&=&
{\bf S}_x^{-1}(0)
\\
\label{eq:mean7}
&&
\hspace{-2.5cm}
+\frac{1}{n}(\mathds{1}_{n}^T\otimes {\bf I}_{m})\boldsymbol{\Sigma}_{{\bf y}_n}^{-1}{\boldsymbol \Upsilon}_{n} {\bf S}_x^{-1}(0).
\end{eqnarray}
Taking norms and using $\|\mathds{1}_{n}^T\otimes {\bf I}_{m}\|_2=\sqrt{n}$, we obtain:
\begin{equation}
\label{eq:mean8}
\begin{aligned}
&\left\|
\frac{1}{n}(\mathds{1}_{n}^{T}\otimes{\bf I}_{m})
\boldsymbol{\Sigma}_{{\bf y}_n}^{-1}
(\mathds{1}_{n}\otimes{\bf I}_{m})
-{\bf S}_x^{-1}(0)
\right\|_2 \\
&\leq
\frac{1}{\sqrt n}
\|\boldsymbol{\Sigma}_{{\bf y}_n}^{-1}\|_2
\|{\boldsymbol \Upsilon}_{n}\|_2
\|{\bf S}_x^{-1}(0)\|_2.
\end{aligned}
\end{equation}
Using $\|\boldsymbol{\Sigma}_{{\bf y}_n}^{-1}\|_2 <c_r$
from the assumptions given in Section \ref{sec:Preliminaries on multidimensional stationary SIRP}, \eqref{eq:mean8} concludes the proof.
\hfill
\QED
%
%%%%%%%%%%%%%%%%%%%%%%%%%%%%%%%%%%%%%%%%%%%%%%%%%%
\subsection{Proof of Result \ref{Result matrice variance limit}}
\label{Proof of Result Result matrice variance limit}
%%%%%%%%%%%%%%%%%%%%%%%%%%%%%%%%%%%%%%%
%
From the property 
 ${\rm Inf}_{f \in [0,1)} \lambda_{\min}({\bf S}_{x}(f))>0$,
\cite[Th. 6.4]{Gutierrez2011} implies that $[{\bf T}_n({\bf S}_{x}(f))]^{-1} \sim {\bf T}_n({\bf S}_{x}^{-1}(f))$.
Then it follows from \cite[Lemma 3.2]{Gutierrez2011} that
\begin{equation}
\label{eq:Gray1}
[{\bf T}_n({\bf S}_{x}(f))]^{-1}{\bf T}_n( {\bf S}_{x,k}^{'}(f))
\sim
{\bf T}_n({\bf S}_{x}^{-1}(f)){\bf T}_n ({\bf S}_{x,k}^{'}(f)).
\end{equation}
Furthermore, it follows  from \cite[Th. 6.2]{Gutierrez2011} that
\begin{equation}
\label{eq:Gray2}
{\bf T}_n({\bf S}_{x}^{-1}(f)){\bf T}_n ({\bf S}_{x,k}^{'}(f))
\sim
{\bf T}_n({\bf S}_{x}^{-1}(f){\bf S}_{x,k}^{'}(f)),
\end{equation}
and from \cite[Lemma 6.1]{Gutierrez2011}
\begin{equation}
\label{eq:Gray3}
{\bf T}_n({\bf S}_{x}^{-1}(f){\bf S}_{x,k}^{'}(f))
\sim
{\bf C}_n({\bf S}_{x}^{-1}(f){\bf S}_{x,k}^{'}(f)).
\end{equation}
This implies because $\sim$ is an equivalence relation that
\begin{equation}
\label{eq:Gray4}
[{\bf T}_n({\bf S}_{x}(f))]^{-1}{\bf T}_n( {\bf S}_{x,k}^{'}(f))
\sim
{\bf C}_n({\bf S}_{x}^{-1}(f){\bf S}_{x,k}^{'}(f)).
\end{equation}
According to the definition of asymptotically equivalent sequences \cite[Definition 3.1]{Gutierrez2011}, \eqref{eq:Gray4} formally expressed as:
\begin{eqnarray}
\nonumber
&&\lim_{n\rightarrow \infty}\|[{\bf T}_n({\bf S}_{x}(f))]^{-1}{\bf T}_n( {\bf S}_{x,k}^{'}(f))
\\
\label{eq:Gray5}
&&
\hspace{1.5cm}
-{\bf C}_n({\bf S}_{x}^{-1}(f){\bf S}_{x,k}^{'}(f))\|_F /\sqrt{n}=0.
\end{eqnarray}
Then, applying \cite[Proposition 2.2]{Gutierrez2011}, we get:
\begin{eqnarray*}
0
&\leq&
\left|\frac{1}{n}\tra\{[{\bf T}_n({\bf S}_{x}(f))]^{-1}{\bf T}_n( {\bf S}_{x,k}^{'}(f))\}\right.
\\
&&
\hspace{2.5cm}
\left.-\frac{1}{n}\tra\{{{\bf C}_n({\bf S}_{x}^{-1}(f){\bf S}_{x,k}^{'}(f))\}}\right|
\\
&=&
|\tra\{[{\bf T}_n({\bf S}_{x}(f))]^{-1}{\bf T}_n( {\bf S}_{x,k}^{'}(f))
\\
&&
\hspace{2.5cm}
-{\bf C}_n({\bf S}_{x}^{-1}(f){\bf S}_{x,k}^{'}(f))\}| / n
\\
&\le&
\sqrt{mn} \|[{\bf T}_n({\bf S}_{x}(f))]^{-1}{\bf T}_n( {\bf S}_{x,k}^{'}(f))
\\
&&
\hspace{2.5cm}
-{\bf C}_n({\bf S}_{x}^{-1}(f){\bf S}_{x,k}^{'}(f))\|_F / n
\\
&=&
\sqrt{m} \|[{\bf T}_n({\bf S}_{x}(f))]^{-1}{\bf T}_n( {\bf S}_{x,k}^{'}(f))
\\
&&
\hspace{2.5cm}
-{\bf C}_n({\bf S}_{x}^{-1}(f){\bf S}_{x,k}^{'}(f))\|_F / \sqrt{n},
\end{eqnarray*}
Therefore
\begin{eqnarray}
\label{eq:Gray6}
&&\lim_{n\rightarrow \infty}
\left|\frac{1}{n}\tra\{[{\bf T}_n({\bf S}_{x}(f))]^{-1}{\bf T}_n( {\bf S}_{x,k}^{'}(f))\}\right.
\\
&&
\hspace{1.3cm}
-\left.\frac{1}{n}\tra\{{{\bf C}_n({\bf S}_{x}^{-1}(f){\bf S}_{x,k}^{'}(f))\}}\right|=0.
\end{eqnarray}
Then from \eqref{eq:Def C}, it follows that
\begin{eqnarray}
\nonumber
&&
\lim_{n \rightarrow \infty}\frac{1}{n}
\tra\{
{{\bf C}_n({\bf S}_{x}^{-1}(f){\bf S}_{x,k}^{'}(f))\}}
\\
\nonumber
&&
\hspace{2cm}
=\lim_{n \rightarrow \infty}\frac{1}{n}
\tra\{{\rm Diag}
({\bf S}_{x,n}^{-1} {\bf S}_{x,n,k}^{'})\}
\\
\label{eq:Gray7}
&&
\hspace{2cm}
=\int_{0}^{1}
\tra({\bf S}_{x}^{-1}(f) {\bf S}_{x,k}^{'}(f))df.
\end{eqnarray}
This concludes the proof of \eqref{eq: matrice variance limit 1}.
Following the same derivation steps above, \eqref{eq: matrice variance limit 2} is also proven.
\hfill
\QED
%%%%%%%%%%%%%%%%%%%%%%%%%%%%%%%%%%%%%%%%%%%%%%%%%%
\subsection{Proof of Result \ref{Result banded Toeplitz}}
\label{Proof of Result Result banded Toeplitz}
%%%%%%%%%%%%%%%%%%%%%%%%%%%%%%%%%%%%%%%
%
For the mean-related term, we get from \eqref{eq:mean1}:
\begin{eqnarray}
\nonumber
\boldsymbol{\Sigma}_{{\bf y}_n}
[(\mathds{1}_{n}\otimes {\bf I}_{m})]_{[i,1]}
&=&
\\
&&
\label{eq:banded block1}
\hspace{-4cm}
\left\{
 \begin{array}{c}
\sum_{k=1-i}^q{\bf R}_x(k)={\bf S}_x(0)-\sum_{k=-q}^{-i}{\bf R}_x(k)\\
\hspace{2.3cm} \mbox{for}\ 1 \le  i \le q\\
\sum_{k=-q}^q{\bf R}_x(k)={\bf S}_x(0)\ \mbox{for}\ q+1\le i\le n-q\\
\sum_{k=-q}^{n-i}{\bf R}_x(k)={\bf S}_x(0)-\sum_{k=n-i+1}^{q}{\bf R}_x(k)\\
\hspace{3.6cm} \mbox{for}\ n-q+1\le i\le n.
\end{array}\right.
\end{eqnarray}
This implies that
\begin{equation}
\label{eq:banded block2}
\boldsymbol{\Sigma}_{{\bf y}_n}(\mathds{1}_{n}\otimes {\bf I}_{m})
-(\mathds{1}_{n}\otimes  {\bf I}_{m}){\bf S}_x(0) 
=-{\boldsymbol \Upsilon}_n, 
\end{equation}
where the $mn \times m$ matrix ${\boldsymbol \Upsilon}_n$ gathers the $q(q+1)$  missing terms ${\bf R}_x(k)$.
From \eqref{eq:banded block2}, we obtain \eqref{eq:mean7} again,
which also implies \eqref{eq:mean8}
where here
%$\|\mathds{1}_{n}^T\otimes {\bf I}_{m}\|_2=\sqrt{n}$ and 
$\|{\boldsymbol \Upsilon}_n\|_2\le h(q)M$ (using $\|{\bf A}\|_2^2=\|{\bf A}^T{\bf A}\|_2)$ with $h(q)\pardef \sqrt{q(q+1)(2q+1)/3}$.
Then using $\|\boldsymbol{\Sigma}_{{\bf y}_n}^{-1}\|_2 <c_r$
from the assumptions given in Section \ref{sec:Preliminaries on multidimensional stationary SIRP}, we get
\begin{eqnarray}
\nonumber
\|\frac{1}{n}(\mathds{1}_{n}^T\otimes {\bf I}_{m})
\boldsymbol{\Sigma}_{{\bf y}_n}^{-1}
(\mathds{1}_{n}\otimes {\bf I}_{m})
-{\bf S}_x^{-1}(0)\|_2
&&
\label{eq:banded block5}
\\
&&
\hspace{-3cm}
\le \frac{h(q)c_rM}{\sqrt{n}}  \| {\bf S}_x^{-1}(0)\|_2.
\end{eqnarray}
Multiplying on the left and right side by ${\boldsymbol \mu'}^T_{{\bf x}_k}$ and ${\boldsymbol \mu}'_{{\bf x}_\ell}$, respectively, and then applying the Cauchy-Schwarz inequality, yields~\eqref{eq:Result banded Toeplitz1}.
\hfill
\QED
%%%%%%%%%%%%%%%%%%%%%%%%%%%%%%%%%%%%%%%%%%%
%%%%%%%%%%%%%%%%%%%%%%%%%%%%%%%%%%%%%%%%%

For the covariance-related term, we now analyze the sequence
$\mathrm{tr}(\boldsymbol{\Sigma}_{\mathbf{y}_n}^{-1} \boldsymbol{\Sigma}_{\mathbf{y}_n,k}')$.
The key tool used here is the following lemma proven in the  supplemental material.
\begin{lemma}
\label{decompositionSigma}
The following decomposition holds
\begin{equation}
\label{eq:banded block6}
{\bf W}_{n,m}^H \boldsymbol{\Sigma}_{{\bf y}_n}{\bf W}_{n,m}
=
{\rm Diag}({\bf S}_{x,n})
+\boldsymbol{\Phi}_n,
\end{equation}
where $\boldsymbol{\Phi}_n$ is the $mn \times mn$ block structured matrix given by:
\begin{equation}
\label{eq:banded block7}
\boldsymbol{\Phi}_n
=
{\bf W}_{n,m}^H \boldsymbol{\Delta}_{n}{\bf W}_{n,m},
\end{equation}
where $ \boldsymbol{\Delta}_{n}$ is the  $nm \times nm$ block symmetric-Toeplitz matrix  whose first block row is $({\bf 0},..., {\bf 0},{\bf R}_x^T(q),...,{\bf R}_x^T(1))$.
\end{lemma}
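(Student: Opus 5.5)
The plan is to move the identity back to the time domain. ${\bf W}_{n,m}={\bf W}_n\otimes{\bf I}_m$ is unitary, so by \eqref{eq:Def C} the claim \eqref{eq:banded block6}--\eqref{eq:banded block7} is equivalent to
\begin{equation*}
\boldsymbol{\Sigma}_{{\bf y}_n}={\bf C}_n({\bf S}_x(f))+\boldsymbol{\Delta}_n .
\end{equation*}
To see this, multiply \eqref{eq:banded block6} on the left by ${\bf W}_{n,m}$ and on the right by ${\bf W}_{n,m}^H$, and use ${\bf W}_{n,m}{\rm Diag}({\bf S}_{x,n}){\bf W}_{n,m}^H={\bf C}_n({\bf S}_x(f))$. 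I will therefore \emph{define} $\boldsymbol{\Delta}_n\pardef\boldsymbol{\Sigma}_{{\bf y}_n}-{\bf C}_n({\bf S}_x(f))$. Then $\boldsymbol{\Phi}_n\pardef{\bf W}_{n,m}^H\boldsymbol{\Delta}_n{\bf W}_{n,m}$ satisfies \eqref{eq:banded block6} and \eqref{eq:banded block7} identically, and the whole content of the lemma is to show that this matrix has the announced block structure.

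\textbf{Step 1: blocks of the circulant.} Compute the $(a,b)$-th block of ${\bf C}_n({\bf S}_x(f))$. Since $[{\bf W}_n]_{a,p}=n^{-1/2}e^{i2\pi(a-1)(p-1)/n}$, this block equals $\frac1n\sum_{p=0}^{n-1}e^{i2\pi(a-b)p/n}{\bf S}_x(p/n)$. In the banded case ${\bf S}_x(f)=\sum_{|k|\le q}{\bf R}_x(k)e^{-i2\pi kf}$, so the orthogonality $\frac1n\sum_p e^{i2\pi(a-b-k)p/n}={\bf 1}(k\equiv a-b \bmod n)$ collapses the block to a finite sum. That sum is the aliased sequence $\sum_{|k|\le q,\;k\equiv a-b\,[n]}{\bf R}_x(k)$. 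For $n>2q$ at most one $k$ survives, so the block is exactly one ${\bf R}_x(\cdot)$, either at the "natural" lag or at the lag wrapped by $\pm n$.

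\textbf{Step 2: compare with $\boldsymbol{\Sigma}_{{\bf y}_n}$.} Compare with the $(a,b)$-th block of $\boldsymbol{\Sigma}_{{\bf y}_n}$, which is ${\bf R}_x(b-a)={\bf R}_x^T(a-b)$. In the band $|a-b|\le q$ the two matrices carry the same covariance lags, up to the transposition relation ${\bf R}_x(-k)={\bf R}_x^T(k)$. In the middle region $q<|a-b|<n-q$ both vanish. The only discrepancy is in the two corners $|a-b|\ge n-q$, where $\boldsymbol{\Sigma}_{{\bf y}_n}$ is zero and ${\bf C}_n$ carries the wrapped lags. Reading off the first block row ($a=1$, $b=n-q+1,\dots,n$) gives the wrapped lags $q,\dots,1$, namely the entries ${\bf R}_x^T(q),\dots,{\bf R}_x^T(1)$ of the announced first row. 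Symmetry of $\boldsymbol{\Sigma}_{{\bf y}_n}$ and of ${\bf C}_n$ (since ${\bf S}_x(f)$ is Hermitian) then gives the block-symmetric Toeplitz structure of $\boldsymbol{\Delta}_n$, which is supported only at lags $\pm(n-q),\dots,\pm(n-1)$.

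\textbf{Main obstacle: signs and transpositions.} The delicate step is the bookkeeping of conventions, and this is exactly where the sign in front of $\boldsymbol{\Phi}_n$ is decided. Three conventions interact: the sign $+i2\pi$ in ${\bf W}_n$ versus $-i2\pi$ in ${\bf S}_x(f)$, the choice of ${\bf R}_x(j-i)$ rather than ${\bf R}_x(i-j)$ for the $(i,j)$-th block, and whether the corner lags are listed as ${\bf R}_x^T(k)$ or ${\bf R}_x(k)$. I will fix these once, exactly as in \eqref{eq:Def C}, \eqref{eq:def Diag} and Section~\ref{sec:Preliminaries on multidimensional stationary SIRP}. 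I will then carry the orientation of the corner blocks through Step~1, checking it on the scalar case $m=1$, $q=1$, $n=3$, so that the matrix $\boldsymbol{\Delta}_n=\boldsymbol{\Sigma}_{{\bf y}_n}-{\bf C}_n$ obtained is the one listed in the statement. This in turn yields \eqref{eq:banded block6} with the $+\boldsymbol{\Phi}_n$ as written.

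Finally, I will note that $\boldsymbol{\Delta}_n$ has only $q(q+1)$ nonzero blocks, each bounded in norm by $M$, independently of $n$. This is the property that the subsequent $\mathcal{O}(n^{-1})$ bound \eqref{eq:Result banded Toeplitz2} will exploit.
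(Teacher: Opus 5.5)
Your reduction is correct and is essentially the paper's route. Since ${\bf W}_{n,m}$ is unitary, the identity determines $\boldsymbol{\Delta}_n=\boldsymbol{\Sigma}_{{\bf y}_n}-{\bf C}_n({\bf S}_x(f))$ uniquely. The paper reaches the same object as $\sum_{|k|\le q}({\bf J}_k-{\bf C}_k)\otimes{\bf R}_x(k)$: it writes $\boldsymbol{\Sigma}_{{\bf y}_n}=\sum_k{\bf J}_k\otimes{\bf R}_x(k)$ with ordinary shifts and swaps them for cyclic shifts ${\bf C}_k$, which ${\bf W}_n$ diagonalizes. You instead compute the circulant blocks by DFT orthogonality, and your Step~1 formula $\sum_{|k|\le q,\,k\equiv a-b}{\bf R}_x(k)$ is correct.

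The problem is Step~2, which asserts two things your own Step~1 contradicts. First, the sign. $\boldsymbol{\Sigma}_{{\bf y}_n}$ vanishes in the two corners, while ${\bf C}_n$ carries the wrapped lags there, so the corner blocks of $\boldsymbol{\Delta}_n$ are \emph{minus} those lags. By Step~1, the $(1,n-j+1)$ block is $-{\bf R}_x(j)$. Your own test case shows this: for $m=1$, $q=1$, $n=3$, the first row of $\boldsymbol{\Sigma}_{{\bf y}_3}-{\bf C}_3$ is $(0,0,-r_1)$. So the identity with $+\boldsymbol{\Phi}_n$ and the announced first block row cannot both hold; the statement has a sign slip. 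The paper's own $\sum_k({\bf J}_k-{\bf C}_k)\otimes{\bf R}_x(k)$ also has corner blocks $-{\bf R}_x^T(j)$. This is harmless downstream, where only $\|\boldsymbol{\Delta}_n\|_2\le qM$ and ${\rm rank}(\boldsymbol{\Delta}_n)\le 2qm$ are used. Still, you should prove the corrected identity rather than claim the literal one.

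Second, and this is a genuine gap for $m>1$. In the band, Step~1 gives ${\bf R}_x(a-b)$ for the $(a,b)$ block of ${\bf C}_n$, whereas $\boldsymbol{\Sigma}_{{\bf y}_n}$ has ${\bf R}_x(b-a)={\bf R}_x^T(a-b)$. ``The same lags up to transposition'' is not equality. Unless every ${\bf R}_x(k)$ is symmetric (automatic only for $m=1$), $\boldsymbol{\Sigma}_{{\bf y}_n}-{\bf C}_n$ has nonzero blocks ${\bf R}_x^T(k)-{\bf R}_x(k)$ along the whole band. For example, a vector MA(1) ${\bf x}_k={\bf e}_k+{\bf B}{\bf e}_{k-1}$ with white, identity-covariance ${\bf e}_k$ has ${\bf R}_x(1)={\bf B}^T$. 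Then $\boldsymbol{\Delta}_n$ is neither corner-supported nor of rank at most $2qm$, and the $\mathcal{O}(n^{-1})$ bound built on it fails. Your scalar sanity check cannot detect this, because scalar lags are trivially symmetric.

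The root cause is the conventions as literally fixed: $+i2\pi$ in ${\bf W}_n$, $-i2\pi$ in ${\bf S}_x(f)$, and $(i,j)$-th block ${\bf R}_x(j-i)$. Under these, ${\bf W}_{n,m}$ block-diagonalizes the circulant extension of $\boldsymbol{\Sigma}_{{\bf y}_n}$ into ${\rm Blkdiag}({\bf S}_x^T(0),\dots,{\bf S}_x^T(\frac{n-1}{n}))$, with ${\bf S}_x^T(f)={\bf S}_x(-f)$, not into ${\rm Diag}({\bf S}_{x,n})$. The paper's proof has the same slip: ${\bf W}_n^H{\bf C}_k{\bf W}_n={\rm diag}(e^{i2\pi kp/n})$ actually produces $\sum_k{\bf R}_x(k)e^{+i2\pi kp/n}$.

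The repair is to replace each ${\bf S}_x(p/n)$ by its transpose in the lemma, or equivalently ${\bf W}_n$ by ${\bf W}_n^*$. Your argument then goes through verbatim, with corner blocks $-{\bf R}_x^T(j)$. Nothing downstream changes, because $\tra(({\bf S}_x^{-1})^T({\bf S}'_{x,k})^T)=\tra({\bf S}_x^{-1}{\bf S}'_{x,k})$ and the constants $c_1$, $c'_k$ are transpose-invariant.
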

Since ${\boldsymbol \Delta}_{n}$ has non-zero entries only in the first and last $q$ block rows and columns for $n>2q$, it admits the
factorization:
\begin{equation}
\label{eq:rank 1}
{\boldsymbol \Delta}_{n}
={\bf I}_{q,m}{\boldsymbol \Delta}_{n}^r{\bf I}_{q,m}^T,
\end{equation}
where ${\bf I}_{q,m}$ is the $mn\! \times\! 2qm$ selection matrix
$\begin{bmatrix}
{\bf I}_{qm} & {\bf 0}\\
 {\bf 0} &  {\bf 0}\\
 {\bf 0} & {\bf I}_{qm}\\
\end{bmatrix}$
and ${\boldsymbol \Delta}_{n}^r$ is a $2qm \times 2qm$ block symmetric matrix.
Consequently rank$(\boldsymbol{\Phi}_n)=$ rank$(\boldsymbol{\Delta}_n) \le 2qm$.
Because ${\bf W}_{n,m}$ is unitary and the spectral norm is invariant under unitary transformations, it follows that $\|\boldsymbol{\Phi}_n\|_2=\|\boldsymbol{\Delta}_{n}\|_2$.
Moreover, since each row and column of $\boldsymbol{\Delta}_n$ contains at most $q$ non-zero blocks, each bounded by $\|\mathbf{R}_x(k)\|_2 \leq M$, it follows that
$
r(\boldsymbol{\Delta}_n) \pardef \max_i\left[  \sum_{j=1}^n \|[\boldsymbol{\Delta}_n]_{[i,j]}\|_2\right] \leq qM$, and $
c(\boldsymbol{\Delta}_n) \pardef \max_j\left[  \sum_{i=1}^n \|[\boldsymbol{\Delta}_n]_{[i,j]}\|_2\right] \leq qM$.
To this end, we state the following lemma, whose proof is provided in the  supplemental material.
\begin{lemma}
\label{block Schur}
The following inequality holds:
\begin{equation}
\label{eq:block Schur inequality}
\|{\boldsymbol \Delta}_{n}\|_2 \le \sqrt{r({ \boldsymbol \Delta}_{n}) c({ \boldsymbol \Delta}_{n}) }.
\end{equation}
\end{lemma}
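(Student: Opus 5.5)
The inequality in Lemma~\ref{block Schur} is a block version of Schur's test, and I would prove it by reducing to the scalar Schur test applied to the matrix of block norms.

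Write ${\bf x}=({\bf x}_1^T,\dots,{\bf x}_n^T)^T\in\mathbb{C}^{mn}$ with blocks ${\bf x}_j\in\mathbb{C}^m$. Set $d_{ij}\pardef\|[\boldsymbol{\Delta}_n]_{[i,j]}\|_2\ge 0$. Then
\[
\|\boldsymbol{\Delta}_n{\bf x}\|_2^2=\sum_{i=1}^n\Big\|\sum_{j=1}^n[\boldsymbol{\Delta}_n]_{[i,j]}{\bf x}_j\Big\|_2^2\le\sum_{i=1}^n\Big(\sum_{j=1}^n d_{ij}\|{\bf x}_j\|_2\Big)^2 ,
\]
using the triangle inequality and the definition of the induced spectral norm of each block. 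Hence $\|\boldsymbol{\Delta}_n\|_2\le\|{\bf D}\|_2$, where ${\bf D}=[d_{ij}]$ is the $n\times n$ nonnegative matrix of block norms, since the vector $(\|{\bf x}_1\|_2,\dots,\|{\bf x}_n\|_2)^T$ has the same Euclidean norm as ${\bf x}$.

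It then remains to bound $\|{\bf D}\|_2\le\sqrt{\|{\bf D}\|_\infty\|{\bf D}\|_1}$, with $\|{\bf D}\|_\infty=\max_i\sum_j d_{ij}=r(\boldsymbol{\Delta}_n)$ and $\|{\bf D}\|_1=\max_j\sum_i d_{ij}=c(\boldsymbol{\Delta}_n)$. I would prove this directly with Cauchy--Schwarz, splitting $d_{ij}=d_{ij}^{1/2}\cdot d_{ij}^{1/2}$. For each $i$,
\[
\Big(\sum_j d_{ij}z_j\Big)^2\le\Big(\sum_j d_{ij}\Big)\Big(\sum_j d_{ij}z_j^2\Big)\le r(\boldsymbol{\Delta}_n)\sum_j d_{ij}z_j^2 .
\]
Summing over $i$ and exchanging the sums gives at most $r(\boldsymbol{\Delta}_n)\,c(\boldsymbol{\Delta}_n)\sum_j z_j^2$. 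An alternative route is $\|{\bf D}\|_2^2=\rho({\bf D}^T{\bf D})\le\|{\bf D}^T{\bf D}\|_\infty\le\|{\bf D}^T\|_\infty\|{\bf D}\|_\infty$, but the Cauchy--Schwarz route is self-contained.

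Combining the two steps and taking square roots gives \eqref{eq:block Schur inequality}. There is no genuine obstacle here. The only point needing care is the first reduction, where passing from block vectors to the vector of their norms must preserve the total Euclidean norm; this holds because $\sum_j\|{\bf x}_j\|_2^2=\|{\bf x}\|_2^2$. No Toeplitz or symmetry structure of $\boldsymbol{\Delta}_n$ is used, so the lemma holds for any block matrix.
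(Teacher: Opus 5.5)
Your proof is correct and follows the paper's argument. The paper likewise bounds $\|\boldsymbol{\Delta}_n{\bf u}\|_2$ by $\|{\bf A}_n{\bf v}\|_2$, where ${\bf A}_n$ is the matrix of block spectral norms and ${\bf v}$ the vector of block norms, and then applies the scalar Schur inequality $\|{\bf A}\|_2\le\sqrt{\|{\bf A}\|_1\|{\bf A}\|_\infty}$. The only difference is that you prove that scalar inequality inline via Cauchy--Schwarz instead of citing Horn and Johnson, which makes your argument self-contained.
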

Using this inequality together with the bounds
$r(\boldsymbol{\Delta}_n),\,c(\boldsymbol{\Delta}_n)\le qM$,
 we obtain the upper bound:
\begin{equation}
\label{eq:banded reminder1}
\|\boldsymbol{\Phi}_n\|_2
=
\|{\boldsymbol \Delta}_{n}\|_2 \le qM.
\end{equation}
An analogous decomposition holds for the derivative
matrices:
\begin{equation}
\label{eq:banded block8}
{\bf W}_{n,m}^H\! \boldsymbol{\Sigma}_{{\bf y}_{n,k}}^{'}\!{\bf W}_{n,m}
=
{\rm Diag}({\bf S}_{x,n,k}^{'})
+
\boldsymbol{\Phi}_{n,k}^{'},
\end{equation}
with
 $\|\boldsymbol{\Phi}_{n,k}^{'}\|_2 \le qM'_k$ and rank$(\boldsymbol{\Phi}_{n,k}^{'}) \le 2qm$.
Using the unitary invariance of the trace, we have:
\begin{eqnarray}
\nonumber
\frac{1}{n}
\tra(\boldsymbol{\Sigma}_{{\bf y}_n}^{-1}\boldsymbol{\Sigma}_{{\bf y}_n,k}^{'})
&&
\\
\nonumber
&&
\hspace{-2cm}
=\frac{1}{n}\tra[({\bf W}_{n,m}^H \boldsymbol{\Sigma}_{{\bf y}_n}{\bf W}_{n,m})^{-1}({\bf W}_{n,m}^H\! \boldsymbol{\Sigma}_{{\bf y}_{n,k}}^{'}\!{\bf W}_{n,m})]
\\
\label{eq: matrice variance limit}
&&
\hspace{-2cm}
=t_{1,n}+t_{2,n},
\end{eqnarray}
with $t_{1,n}
\pardef \frac{1}{n}
\tra[({\rm Diag}({\bf S}_{x,n})
+\boldsymbol{\Phi}_n)^{-1}{\rm Diag}({\bf S}_{x,n,k}^{'})]$ and
$t_{2,n}
\pardef \frac{1}{n}
\tra[({\rm Diag}({\bf S}_{x,n})
+\boldsymbol{\Phi}_n)^{-1}\boldsymbol{\Phi}_{n,k}^{'})]$.

Let us  first consider $t_{2,n}$.  
Applying the trace inequality
\cite[Th.3.3.14]{Horn1991}
\begin{equation}
\label{eq: trace inequality}
|\tra({\bf A}^T{\bf B})| \le \|{\bf A}\|_2  \|{\bf B}\|_*,
\end{equation}
with $\|({\bf W}_{n,m}^H \boldsymbol{\Sigma}_{{\bf y}_n}{\bf W}_{n,m})^{-1}\|_2=\|\boldsymbol{\Sigma}_{{\bf y}_n}^{-1}\|_2$, 
we obtain:
\begin{equation}
\label{eq:bound T2}
|t_{2,n}|
\le \frac{1}{n}\|\boldsymbol{\Sigma}_{{\bf y}_n}^{-1}\|_2 \|\boldsymbol{\Phi}_{n,k}^{'}\|_*.
\end{equation}
Since $\|\boldsymbol{\Phi}_{n,k}^{'}\|_* \le$ rank$(\boldsymbol{\Phi}_{n,k}^{'})\|\boldsymbol{\Phi}_{n,k}^{'}\|_2 \le 2q^2m M'_k$ and $\|\boldsymbol{\Sigma}_{{\bf y}_n}^{-1}\|_2 < c_r$, it follows that:
\begin{equation}
\label{eq:module T2}
|t_{2,n}|
\le \frac{2q^2m M'_kc_r}{n}.
\end{equation}
Consider now $t_{1,n}$.  Using the resolvant identity:
$\mathbf{A}^{-1} - \mathbf{B}^{-1} = -\mathbf{A}^{-1} (\mathbf{A} - \mathbf{B}) \mathbf{B}^{-1}$ with 
$\mathbf{A} = \mathrm{Diag}(\mathbf{S}_{x,n}) + \boldsymbol{\Phi}_n$ and $\mathbf{B} = \mathrm{Diag}(\mathbf{S}_{x,n})$, we have
\begin{eqnarray}
\nonumber
[{\rm Diag}({\bf S}_{x,n})
+\boldsymbol{\Phi}_n]^{-1}
&-&
{\rm Diag}({\bf S}_{x,n}^{-1})
\\
\label{eq:banded block9}
&&
\hspace{-3cm}
=-[{\rm Diag}({\bf S}_{x,n})
+\boldsymbol{\Phi}_n]^{-1}
\boldsymbol{\Phi}_n
{\rm Diag}({\bf S}_{x,n}^{-1}).
\end{eqnarray}
Substituting into $t_{1,n}$ yields
\begin{equation}
\label{eq:banded block10}
t_{1,n}
=
\frac{1}{n}\tra\{{\rm Diag}({\bf S}_{x,n}^{-1}){\rm Diag}({\bf S}_{x,n,k}^{'})\}
+t_{3,n},
\end{equation}
with
$t_{3,n}\pardef 
-\frac{1}{n}\tra\{
[{\rm Diag}({\bf S}_{x,n})
+\boldsymbol{\Phi}_n]^{-1}
\boldsymbol{\Phi}_n
{\rm Diag}({\bf S}_{x,n}^{-1}){\rm Diag}({\bf S}_{x,n,k}^{'})\}
=
-\frac{1}{n}\tra\{{\rm Diag}({\bf S}_{x,n}^{-1}){\rm Diag}({\bf S}_{x,n,k}^{'})
[{\rm Diag}({\bf S}_{x,n})
\!+\!\boldsymbol{\Phi}_n]^{-1}
\boldsymbol{\Phi}_n\}$.
Applying again \eqref{eq: trace inequality}
\begin{eqnarray}
\nonumber
|t_{3,n}|
&\le&
\frac{1}{n}
\|{\rm Diag}({\bf S}_{x,n}^{-1})\|_2
\|{\rm Diag}({\bf S}_{x,n,k}^{'})\|_2
\\
\label{eq:banded block11}
&&
\hspace{0.8cm}
\|[{\rm Diag}({\bf S}_{x,n})
\!+\!\boldsymbol{\Phi}_n]^{-1}\|_2
\|\boldsymbol{\Phi}_n\|_*
\end{eqnarray}
with
$\|{\rm Diag}({\bf S}_{x,n}^{-1})\|_2\le \frac{1}{c_1}$, 
$\|{\rm Diag}({\bf S}_{x,n,k}^{'})\|_2\le c'_k$,
$\|[{\rm Diag}({\bf S}_{x,n})
\!+\!\boldsymbol{\Phi}_n]^{-1}\|_2 =\|\boldsymbol{\Sigma}_{{\bf y}_n}^{-1}\|_2< c_r$ and
$\|\boldsymbol{\Phi}_{n}\|_* \le$ rank$(\boldsymbol{\Phi}_{n})\|\boldsymbol{\Phi}_{n}\|_* \le 2q^2m M$, gives
\begin{equation}
\label{eq:bound T3}
|t_{3,n}|
\le \frac{2q^2m Mc_rc'_k}{n c_1}.
\end{equation}
Therefore, combining \eqref{eq:bound T2}, \eqref{eq:bound T3} and
${\rm Diag}({\bf S}_{x,n}^{-1}){\rm Diag}({\bf S}_{x,n,k}^{'})={\rm Diag}({\bf S}_{x,n}^{-1}{\bf S}_{x,n,k}^{'})$,
yields
$\frac{1}{n}
\tra(\boldsymbol{\Sigma}_{{\bf y}_n}^{-1}\boldsymbol{\Sigma}_{{\bf y}_n,k}^{'})
-\frac{1}{n}{\rm Diag}({\bf S}_{x,n}^{-1}{\bf S}_{x,n,k}^{'})
=t_{2,n}+t_{3,n}$
which concludes the proof.
\hfill
\QED
%
%%%%%%%%%%%%%%%%%%%%%%%%%%%%%%%%%%%%%%%%%%%%%%%%%%
\subsection{Proof of Result \ref{limit of the three coefficients}}
\label{Proof of  lim a012}
%%%%%%%%%%%%%%%%%%%%%%%%%%%%%%%%%%%%%%%%%ùùù
%%%%%%%%%%%%%%%%%%%%%%%%%%%%%%%%%%%%%
\subsubsection{Proof of rel. \eqref{eq:lim a0}}
%%%%%%%%%%%%%%%%%%%%%%%%%%%%%%%%%%%%%%%%%
%
We establish the result under the additional assumption
$\e(\tau^{-2}) < \infty$. The argument hinges on the
convergence of $z_n \pardef \e(\tau^{-1}|\mathcal{Q}_{mn})$
to $\tau^{-1}$ in $L^2$, which we prove first.

\noindent\textbf{Step 1 ($z_n \stackrel{\rm L^2}{\longrightarrow} \tau^{-1}$).}
Since $\e(\tau^{-2})<\infty$, $\tau^{-1}$ is square-integrable,
and the conditional expectation $\e(\tau^{-1}|\mathcal{Q}_{mn})$
is the orthogonal projection onto the subspace of
$\mathcal{Q}_{mn}$-measurable r.v.. Hence it
minimizes the mean-square error among all such variables; in
particular, for $\frac{mn}{\mathcal{Q}_{mn}}$ (square-integrable
for $mn>4$), 
$\e(\tau^{-1}-z_n)^2
\le
\e\left(\tau^{-1}-\frac{mn}{\mathcal{Q}_{mn}}\right)^2$.
Using the representation $\mathcal{Q}_{mn}=\tau\,\chi^2_{mn}$,
with $\chi^2_{mn}$ independent of $\tau$, the right-hand side
factorizes as
$\e(\tau^{-2})\,\e\left(1-\frac{mn}{\chi^2_{mn}}\right)^2$.
Expanding the square and applying the inverse-moment identity
$\e[(\chi^2_{k})^{-p}]=\frac{\Gamma(\frac{k}{2}-p)}{2^p\,\Gamma(\frac{k}{2})}$,
$k>2p$, with $k=mn$ and $p\in\{1,2\}$ yields
$\e\left(1-\frac{mn}{\chi^2_{mn}}\right)^2
=
1-\frac{2mn}{mn-2}+\frac{(mn)^2}{(mn-2)(mn-4)}
=
\frac{2(mn+4)}{(mn-2)(mn-4)}.$
Since $\e(\tau^{-2})<\infty$, it follows that
$\lim_{n\rightarrow\infty}\e(z_n-\tau^{-1})^2=0$, i.e.,
$z_n \stackrel{\rm L^2}{\longrightarrow} \tau^{-1}$.

\noindent\textbf{Step 2 ($\lim_{n\to\infty} a_{0,mn}=\e(\tau^{-1})$).}
Because $\frac{\mathcal{Q}_{mn}}{mn}z_n$ is
$\mathcal{Q}_{mn}$-measurable, the identity
$\e[X\,\e(Y|\mathcal{Q})]=\e[XY]$ applied with
$X=\frac{\mathcal{Q}_{mn}}{mn}z_n$ and $Y=\tau^{-1}$ gives
\begin{eqnarray}
\nonumber
a_{0,mn}
&\pardef&
\e\left(\frac{\mathcal{Q}_{mn}}{mn}z_n^2\right)
=
\e\left(\frac{\mathcal{Q}_{mn}}{mn}z_n\,
\e(\tau^{-1}|\mathcal{Q}_{mn})\right)
\\
\label{eq:proof a0 1}
&=&
\e\left(\frac{\mathcal{Q}_{mn}}{mn\,\tau}z_n\right)
=
\e\left(\frac{\chi^2_{mn}}{mn}z_n\right).
\end{eqnarray}
Adding and subtracting $\e(z_n)$ and invoking the triangle
inequality, we obtain
\begin{equation}
\label{eq:proof a0 2}
|a_{0,mn}-\e(\tau^{-1})|
\le
|\e((\frac{\chi^2_{mn}}{mn}-1)z_n)|
+|\e(z_n-\tau^{-1})|,
\end{equation}
where, by the Cauchy-Schwarz inequality,
\begin{equation}
\label{eq:proof a0 3}
|\e((\frac{\chi^2_{mn}}{mn}-1)z_n)|
\le
[\e(\frac{\chi^2_{mn}}{mn}-1)^2]^{1/2}
[\e(z_n^2)]^{1/2}.
\end{equation}
Finally, $\e\left(\frac{\chi^2_{mn}}{mn}-1\right)^2=\frac{2}{mn}
\rightarrow 0$, and the $L^2$ convergence established in Step~1
implies $\e(z_n-\tau^{-1})\rightarrow 0$ and
$\e(z_n^2)\rightarrow\e(\tau^{-2})<\infty$. Hence both terms on
the right-hand side of~\eqref{eq:proof a0 2} vanish as
$n\rightarrow\infty$, which yields
$\lim_{n\rightarrow\infty}a_{0,mn}=\e(\tau^{-1})$ and completes
the proof.
\hfill
\QED
%
%%%%%%%%%%%%%%%%%%%%%%%%%%%%%%%%%%%%%%%
%%%%%%%%%%%%%%%%%%%%%%%%%%%%%%%%%%%%%%%
\subsubsection{Proof of Relation~\eqref{eq:lim a2}}
%%%%%%%%%%%%%%%%%%%%%%%%%%%%%%%%%%%%%%%
%
The proof hinges on the almost-sure convergence
$\frac{\mathcal{Q}_{mn}}{mn}=\frac{\chi^2_{mn}}{mn}\,\tau
\stackrel{\rm as}{\longrightarrow}\tau$ (strong law of large
numbers), which forces the posterior of $\tau$ given
$\mathcal{Q}_{mn}=q$ to concentrate, as $n\to\infty$, in a
vanishing neighborhood of $q/(mn)$. Since this concentration
occurs on a single atom for a discrete prior but around a
single point for a continuous prior, the two cases are treated
separately and then combined for the mixed case.

\noindent\textbf{Case 1: discrete distribution ($\beta=1$).}
Assume that $\tau$ is discretely distributed, and let
$t_1 < \dots < t_k < \dots < t_r$ denote its finitely many
positive values, with $P(\tau=t_k)=p_k$ and
$\Delta \pardef \min_{i \neq j}|t_i-t_j| > 0$.
Consider the $r$ disjoint intervals
$I_k \pardef [\,mn\,t_k(1-\epsilon),\, mn\,t_k(1+\epsilon)\,]$,
$k=1,\dots,r$ (with $\epsilon$ to be specified later). The key
quantity is the conditional probability
$P(\tau=t_k | \mathcal{Q}_{mn}=q)$ for $q \in I_k$, which is
controlled by the following lemma, proved in the  supplemental
material.
\begin{lemma}
\label{conditional probability}
For $q \in I_k$, there exist $C_k>0$ and $c>0$, independent of
$n$, such that
\begin{equation}
\label{eq:conditional probability q k}
P(\tau=t_k | \mathcal{Q}_{mn}=q)
\ge
1-C_k\,e^{-cn}.
\end{equation}
\end{lemma}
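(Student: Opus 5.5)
We write the posterior explicitly via Bayes' rule. Given $\tau=t$, $\mathcal{Q}_{mn}=t\chi^2_{mn}$ has density $t^{-1}f_{mn}(q/t)$, where $f_{mn}$ is the $\chi^2_{mn}$ density. Hence
\begin{equation*}
P(\tau=t_k|\mathcal{Q}_{mn}=q)=\frac{p_k\,t_k^{-mn/2}e^{-q/(2t_k)}}{\sum_{j=1}^r p_j\,t_j^{-mn/2}e^{-q/(2t_j)}}
=\Big(1+\sum_{j\neq k}\frac{p_j}{p_k}\,e^{-\frac{mn}{2}\,D_j(q)}\Big)^{-1},
\end{equation*}
with $D_j(q)\pardef \ln(t_j/t_k)+\frac{q}{mn}\big(\frac{1}{t_j}-\frac{1}{t_k}\big)$. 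Using $1/(1+x)\ge 1-x$, it suffices to show that $D_j(q)\ge d>0$ uniformly for $q\in I_k$ and $j\neq k$. Then the bound holds with $C_k=\sum_{j\neq k}p_j/p_k$ and $c=md/2$.

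For the lower bound on $D_j$, write $q/(mn)=t_k(1+\eta)$ with $|\eta|\le\epsilon$ and set $s=t_k/t_j$. Then
\begin{equation*}
D_j=-\ln s+s-1+\eta(s-1)=h(s)+\eta(s-1),\qquad h(s)\pardef s-1-\ln s .
\end{equation*}
The function $h$ is strictly convex, nonnegative, and vanishes only at $s=1$. Since the atoms are finitely many and distinct, the ratios $s=t_k/t_j$ for $j\neq k$ range over a finite set bounded away from $1$. This gives $h_0\pardef\min_{j\neq k}h(t_k/t_j)>0$ and $S\pardef\max_j|t_k/t_j-1|<\infty$.

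The main point is choosing $\epsilon$ independent of $n$ so that the perturbation is dominated. Taking $\epsilon\le \min(h_0/(2S),\,\epsilon_0)$ gives $D_j\ge h_0/2\pdef d$. Here $\epsilon_0$ is small enough that the intervals $I_k$ are disjoint, for instance $\epsilon_0<\Delta/(2t_r)$. Then $d$, $c$ and $C_k$ depend only on the law of $\tau$ and on $m$, not on $n$ or on $q\in I_k$.

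The only delicate aspect is uniformity: $\epsilon$ must be fixed before letting $n\to\infty$. This is possible precisely because $h$ has a strict minimum at $1$ and the support of $\tau$ is finite, so $h_0$ and $S$ are fixed constants.
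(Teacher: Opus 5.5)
Your proof is correct and follows essentially the same route as the paper. Both arguments apply Bayes' rule, write each likelihood ratio as $\frac{p_j}{p_k}e^{-\frac{mn}{2}D_j(q)}$ with $D_j=(u-1-\ln u)+\eta(u-1)$ and $u=t_k/t_j$ (this is the paper's $-G_s(u)$), bound the perturbation by $\epsilon|u-1|$, and conclude with $1/(1+x)\ge 1-x$. The one difference is that the paper bounds $u-1-\ln u$ from below explicitly via $\ln x\le x-1-\frac{(x-1)^2}{2\max(x,1)}$, which gives constants in terms of $\Delta,t_1,t_r$ that are automatically uniform in $k$, whereas you take a minimum over the finitely many ratios. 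To match that, take your minimum over all $k$ as well, so that a single $\epsilon$ and a single $c$ serve every interval $I_k$, as the main-text argument requires.
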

Since the variance is translation-invariant,
\begin{eqnarray}
\nonumber
\var(\tau^{-1}|\mathcal{Q}_{mn}=q)
\!\!\!\!&=&\!\!\!\!
\var(\tau^{-1}\!-\!t_k^{-1}|\mathcal{Q}_{mn}=q)
\\
\label{eq:var 1}
\!\!\!\!&\le&\!\!\!\!
\e\left[(\tau^{-1}\!-\!t_k^{-1})^2 | \mathcal{Q}_{mn}=q\right]\!\!,
\end{eqnarray}
and, because $\tau$ takes values in $\{t_1,\dots,t_r\}$,
$(\tau^{-1}-t_k^{-1})^2
\le \max_{i\neq j}(t_i^{-1}-t_j^{-1})^2\,
{\bf 1}_{\tau \neq t_k}$.
Consequently, setting
$C' \pardef \max_{i\neq j}(t_i^{-1}-t_j^{-1})^2$,
Lemma~\ref{conditional probability} yields, for $q \in I_k$,
\begin{equation}
\label{eq:var 2}
\var(\tau^{-1}|\mathcal{Q}_{mn}=q)
\le
C'\, P(\tau \neq t_k | \mathcal{Q}_{mn}=q)
\le
C'\, C_k\, e^{-cn}.
\end{equation}
We now decompose the quantity of interest as
\begin{equation}
\label{eq:var 3}
\frac{1}{n}\e\left[\mathcal{Q}_{mn}^2\,
\var(\tau^{-1}|\mathcal{Q}_{mn})\right]
=a'_n+b'_n,
\end{equation}
with
$a'_n \pardef
\frac{1}{n}\e\left[\mathcal{Q}_{mn}^2\,
\var(\tau^{-1}|\mathcal{Q}_{mn})\,
{\bf 1}_{\cup_{k=1}^r I_k}(\mathcal{Q}_{mn})\right]$
and
$b'_n \pardef
\frac{1}{n}\e\left[\mathcal{Q}_{mn}^2\,
\var(\tau^{-1}|\mathcal{Q}_{mn})\,
{\bf 1}_{\cap_{k=1}^r \bar{I}_k}(\mathcal{Q}_{mn})\right]$,
where $\bar{I}_k$ denotes the complement of $I_k$.

\noindent\textit{Bound on $a'_n$.}
Applying \eqref{eq:var 2} on $\cup_k I_k$ gives
\begin{equation}
\label{eq:var 4}
a'_n
\le \frac{1}{n}\e\left(\mathcal{Q}_{mn}^2\right)
C'\, C_*\,e^{-cn},
\end{equation}
where $C_*\pardef \max_k C_k$. Using
$\e\left(\mathcal{Q}_{mn}^2\right)
=\e(\tau^2)\,\e\left[(\chi^2_{mn})^2\right]
=mn(mn+2)\,\e(\tau^2)$,
we obtain
\begin{equation}
\label{eq:var 5}
a'_n
\le m(mn+2)\,\e(\tau^2)\,C'\,C_*\,e^{-cn}.
\end{equation}

\noindent\textit{Bound on $b'_n$.}
We note that the event
$\{\mathcal{Q}_{mn} \in \cap_{k=1}^r \bar{I}_k\}$ implies
$\left|\frac{\chi^2_{mn}}{mn}-1\right| \geq \epsilon$.
Moreover, because $\tau^{-1} \leq t_1^{-1}$ is bounded, the
conditional variance satisfies
$\var(\tau^{-1}|\mathcal{Q}_{mn}=q)\le t_1^{-2}$ for all $q$;
setting $c'\pardef t_1^{-2}$, we get
\begin{equation}
\label{eq:var 6}
b'_n
\le
\frac{1}{n}\,c'\,
\e[\mathcal{Q}_{mn}^2\,
{\bf 1}_{|\frac{\chi^2_{mn}}{mn}-1| \geq \epsilon}].
\end{equation}
Applying the Cauchy-Schwarz inequality to \eqref{eq:var 6}
%   [Fix 9: CS applied to (var 6), not (var 5)]
and invoking the standard Chernoff bound for the $\chi^2_{mn}$
distribution \cite[Th.~2.8]{Boucheron2013}, namely
\begin{equation}
\label{eq:Chernoff bound}
P[|\frac{\chi^2_{mn}}{mn}-1| \geq \epsilon]
\le 2e^{-mn\,c(\epsilon)},
\end{equation}
we deduce
\begin{eqnarray}
\nonumber
b'_n
&\le&
\frac{c'}{n}\sqrt{\e(\mathcal{Q}_{mn}^4)}\;
P[|\frac{\chi^2_{mn}}{mn}-1|
\geq \epsilon]^{1/2}
\\
\label{eq:var 7}
&=&
\sqrt{\e(\tau^4)}O(n)
e^{-\frac{mn}{2}c(\epsilon)}.
\end{eqnarray}
Combining \eqref{eq:var 3}, \eqref{eq:var 4}, and
\eqref{eq:var 7}, both $a'_n$ and $b'_n$ decay exponentially;
hence
$\lim_{n \rightarrow \infty}\frac{1}{n}
\e[\mathcal{Q}_{mn}^2\,
\var(\tau^{-1}|\mathcal{Q}_{mn})]=0$.
Then, from \eqref{eq:a2class} and
Lemma~\ref{property xi 2} together with the identity
$\e[\mathcal{Q}_{mn}^2\,
\var(\tau^{-1}|\mathcal{Q}_{mn})]
=\e[\var(\chi^2_{mn}|\mathcal{Q}_{mn})]$,
we conclude that
$\lim_{n\rightarrow \infty} n\,a_{2,mn}^{\rm Clas}=0$.

\noindent\textbf{Case 2: continuous distribution ($\beta=0$).}
We now assume that $\tau$ is continuously distributed with a
continuous p.d.f. $p_\tau(t)$. Since
$\e(\chi^2_{mn}/mn)=1$, the variance of the conditional
expectation can be written as
\begin{equation}
\label{eq:var 8}
\var[\e(\frac{\chi_{mn}^2}{mn}|\mathcal{Q}_{mn})]
=\e\left\{[\e(\frac{\chi_{mn}^2}{mn}|\mathcal{Q}_{mn})-1]^2 \right\}.
\end{equation}
The following lemma, proved in the  supplemental material,
controls the integrand on compact sets.
\begin{lemma}
\label{conditional}
Let $K \pardef [\alpha_-,\alpha_+]\subset (0,\infty)$.
There exists a constant $C_K>0$ such that, for every $q$
satisfying $\frac{q}{mn} \in K$,
\begin{equation}
\label{eq:conditiona expectation}
|\e(\frac{\chi_{mn}^2}{mn}|\mathcal{Q}_{mn}=q)-1|
\le \frac{C_K}{n}.
\end{equation}
\end{lemma}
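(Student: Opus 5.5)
The plan is to compute the posterior of $\tau$ given $\mathcal{Q}_{mn}=q$ explicitly and to recognize, in the variable $u=\tau^{-1}$, a Gamma law tilted by the prior. Write $k=mn$ and $s\pardef q/k\in K$. By \eqref{eq:Stochastic representation CG b}, conditionally on $\mathcal{Q}_{mn}=q$ we have $\chi^2_{mn}=q/\tau$. Hence
\[
\e\Bigl(\frac{\chi^2_{mn}}{mn}\Big|\mathcal{Q}_{mn}=q\Bigr)=s\,\e(u\,|\,\mathcal{Q}_{mn}=q),
\]
which is also consistent with Lemma~\ref{property phi}. Bayes' rule with the $\chi^2_k$ density $\propto x^{k/2-1}e^{-x/2}$ gives a posterior of $\tau$ proportional to $p_\tau(t)\,t^{-k/2}e^{-q/(2t)}$. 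After the change of variable $u=1/t$, the posterior density of $u$ is proportional to $w(u)\,u^{k/2-2}e^{-qu/2}$, with $w(u)\pardef p_\tau(1/u)$. This is a $\Gamma(k/2-1,\;q/2)$ density reweighted by $w$.

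For a flat weight, $\e(u)=(k-2)/q$, so $s\,\e(u)=1-2/k$, which is already within $O(1/n)$ of $1$. It therefore remains to bound the tilt. Let $\e_\Gamma$ and $\cov_\Gamma$ denote moments under the untilted Gamma law. I will use
\[
\e_w(u)-\e_\Gamma(u)=\frac{\cov_\Gamma(u,w(u))}{\e_\Gamma(w(u))}.
\]
Under $\Gamma(k/2-1,q/2)$, $u$ has mean $\approx 1/s$ and standard deviation $\approx \sqrt{2/k}/s$. Both are uniform over $s\in K$ because $K$ is compact in $(0,\infty)$.

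Fix a compact neighborhood $N=[1/(\alpha_+(1+\eta)),\,(1+\eta)/\alpha_-]$ of $\{1/s:s\in K\}$, and split every expectation into $u\in N$ and $u\notin N$.

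On $N$, I will use that $w$ is bounded below by some $w_->0$ and Lipschitz with constant $L$. Then
\[
|\cov_\Gamma(u,w)|\le L\,\e_\Gamma\bigl[(u-\e_\Gamma u)^2\bigr]+\text{tail}=O(1/k),
\]
and the denominator satisfies $\e_\Gamma(w)\ge w_-\,P_\Gamma(u\in N)\ge w_-/2$ for $n$ large. Off $N$, the Gamma tail is exponentially small uniformly in $s\in K$, by the same Chernoff bound \eqref{eq:Chernoff bound} used for $\chi^2$ variables. Since $p_\tau$ is integrable, the weighted contributions $\int_{N^c}(1+u)\,w\,\Gamma$ are also $O(e^{-ck})$. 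The crude bound is $w(u)\,u^{k/2-2}e^{-qu/2}\le\sup_{N^c}\bigl[u^{k/2}e^{-qu/2}\bigr]\cdot u^{-2}p_\tau(1/u)$, and $u^{-2}p_\tau(1/u)$ is the integrable density of $u$. Collecting these bounds yields a constant $C_K$, depending only on $K$, $L$, $w_-$ and $\e(\tau^{-1})$, with $|s\,\e_w(u)-1|\le 2/k+O(1/k)\le C_K/n$.

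The main obstacle is the regularity this argument consumes. Mere continuity of $p_\tau$ gives only $o(1)$ from the covariance term, not $O(1/n)$. I therefore expect to need, and will state as an explicit hypothesis of the lemma, that $p_\tau$ is positive and locally Lipschitz, or $C^1$, on a neighborhood of $K$. Positivity is needed to keep the denominator $\e_\Gamma(w)$ away from zero uniformly in $s$. If the weaker continuity hypothesis must be kept, the fallback is to prove only $o(1)$ uniformly on $K$. I would then check whether the subsequent use of \eqref{eq:var 8} in Case~2 only needs $n\,\var[\e(\chi^2_{mn}/mn|\mathcal{Q}_{mn})]\to 0$ on the bulk, which requires $o(n^{-1/2})$ rather than $O(n^{-1})$. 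That would still require a smoothing-type argument, so the Lipschitz route is the one I would pursue first.
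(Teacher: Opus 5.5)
Your argument is correct under the hypothesis you add, and it takes a genuinely different route from the paper.

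\textbf{The two routes.} The paper substitutes $u=rx$ with $r=q/(mn)$ and writes $\e(\chi^2_{mn}/mn\,|\,\mathcal{Q}_{mn}=q)$ as a ratio of two Laplace-type integrals $\int a(r,x)e^{-n\phi(x)}dx$. Here $\phi(x)=\frac{m}{2}(\ln x+1/x)$, and the amplitudes are $x^{-1}p_\tau(rx)$ and $p_\tau(rx)$. It expands numerator and denominator separately as $p_\tau(r)+O(n^{-1})$ by Laplace's method with a parameter, then divides using $p_\tau\ge m_K>0$ on $K$. You instead view the posterior of $\tau^{-1}$ as a $\Gamma(mn/2-1,q/2)$ law tilted by $w=p_\tau(1/\cdot)$. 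You read off the exact Gamma bias $-2/(mn)$ and control the tilt through $\cov_\Gamma(u,w)/\e_\Gamma(w)$, which is bounded by $L\var_\Gamma(u)=O(1/n)$.

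\textbf{What each buys.} Your covariance identity captures the cancellation between numerator and denominator directly, so you need only local Lipschitz continuity of $p_\tau$. The paper's route needs an $O(n^{-1})$ remainder in each integral separately, which requires bounded derivatives, and really second-order smoothness, of $p_\tau$ near $K$. The paper claims these derivative bounds follow from continuity, and they do not. Your suspicion that continuity is not enough is right. For example, a one-sided cusp $p_\tau(t)\approx c+(t-t_0)_+^{1/2}$ with $t_0$ interior to $K$ gives an error of order $n^{-3/4}$ at $q=(mn-2)t_0$. Both proofs also need $p_\tau>0$ on a neighborhood of $K$, which the statement does not list. In exchange, the Laplace route is an off-the-shelf tool and yields a full expansion in powers of $1/n$ when $p_\tau$ is smooth.

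\textbf{A detail in your tail bound.} Carry the Gamma normalization $(q/2)^{mn/2-1}/\Gamma(mn/2-1)$. By Stirling it is only $O(\sqrt{n})$ times $\sup_u u^{mn/2}e^{-qu/2}$, uniformly in $s\in K$, so the $e^{-cn}$ gap on $N^c$ still wins. The untilted Gamma is a proper law only for $mn>2$, so the small values of $n$ are absorbed into $C_K$.

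\textbf{Your fallback is too pessimistic.} With $w$ merely uniformly continuous on $N$, Cauchy--Schwarz already gives
$|\cov_\Gamma(u,w)|\le \sqrt{\var_\Gamma(u)}\,\bigl(\e_\Gamma[(w(u)-w(\bar u))^2{\bf 1}_N]\bigr)^{1/2}+\text{tail}=o(n^{-1/2})$, uniformly in $s\in K$. The error is therefore $o(n^{-1/2})$ rather than merely $o(1)$. That is exactly what Case~2 uses, since $a''_n\le n\cdot o(n^{-1})\to 0$. So the paper's continuity setting, with positivity added, can be rescued within your framework without any smoothing argument.
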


We split the right-hand side of \eqref{eq:var 8} as
\begin{equation}
\label{eq:split}
n\e\left\{[\e(\frac{\chi_{mn}^2}{mn}|\mathcal{Q}_{mn})-1]^2 \right\}
=a''_n+b''_n,
\end{equation}
with
$a''_n\pardef
n\e\left\{[\e(\frac{\chi_{mn}^2}{mn}|\mathcal{Q}_{mn})-1]^2
{\bf 1}_{A_n}\right\}$
and
$b''_n \pardef
n\e\left\{[\e(\frac{\chi_{mn}^2}{mn}|\mathcal{Q}_{mn})-1]^2
{\bf 1}_{\bar{A}_n}\right\}$,
where $A_n \pardef \left\{ \alpha_- \le \frac{\mathcal{Q}_{mn}}{mn} \le \alpha_+ \right\}$.
Applying Lemma~\ref{conditional} yields
\begin{equation}
\label{eq:var 9}
a''_n \le \frac{C_K^2}{n}.
\end{equation}
For the tail part, the $L^2$-contraction property of
conditional expectation (conditional Jensen inequality) applied
to $\frac{\chi_{mn}^2}{mn}$ gives
%   [Fix 12: tool named explicitly]
\begin{equation}
\label{eq:var 10}
b''_n
\le n\e[(\frac{\chi_{mn}^2}{mn}-1)^2 {\bf 1}_{\bar{A}_n}].
\end{equation}
For every $0<\epsilon <1$, define
$B_{\epsilon}\pardef \left\{ \frac{\alpha_-}{1-\epsilon} \le \tau \le  \frac{\alpha_+}{1+\epsilon} \right\}$
and
$C_{n,\epsilon}\pardef \left\{ \left|\frac{\chi^2_{mn}}{mn}-1\right| < \epsilon \right\}$.
Since $(B_{\epsilon}\cap C_{n,\epsilon})\subset A_n$, we have
$\bar{A}_n \subset \bar{B}_{\epsilon}\cup \bar{C}_{n,\epsilon}$,
and splitting the right-hand side of \eqref{eq:var 10} accordingly yields
\begin{equation}
\label{eq:var 11}
b''_{n} \le
b''_{1,n,\epsilon} +b''_{2,n,\epsilon},
\end{equation}
with
$b''_{1,n,\epsilon}
\pardef n\e[(\frac{\chi_{mn}^2}{mn}-1)^2 {\bf 1}_{\bar{B}_{\epsilon}}]$
and
$b''_{2,n,\epsilon}
\pardef n\e[(\frac{\chi_{mn}^2}{mn}-1)^2 {\bf 1}_{\bar{C}_{n,\epsilon}}]$.
By the independence of $\tau$ and $\chi_{mn}^2$ and
$\e[(\frac{\chi^2_{mn}}{mn}-1)^2]
=\frac{\var(\chi^2_{mn})}{(mn)^2}=\frac{2}{mn}$,
\begin{eqnarray}
\label{eq:var 12}
b''_{1,n,\epsilon}
&=&
\frac{2}{m}\,P(\tau \notin [ \frac{\alpha_-}{1-\epsilon}, \frac{\alpha_+}{1+\epsilon}]),
\\
\nonumber
b''_{2,n,\epsilon}
&\le&
n\sqrt{\e[(\frac{\chi^2_{mn}}{mn}-1)^4]}\,
\sqrt{P(|\frac{\chi^2_{mn}}{mn}-1| \geq \epsilon )}
\\
\label{eq:var 13}
&=&
O(1)e^{-\frac{mn}{2}c(\epsilon)},
\end{eqnarray}
where the last line uses the Cauchy-Schwarz inequality,
$\e[(\frac{\chi^2_{mn}}{mn}-1)^4]=O(n^{-2})$,
and the Chernoff bound \eqref{eq:Chernoff bound}.

We now take the limits in the following order.
First, choosing $\alpha_-$, $\alpha_+$, and $\epsilon$ makes
$b''_{1,n,\epsilon}$ arbitrarily small, \emph{uniformly in $n$},
since $P(\tau \notin [\frac{\alpha_-}{1-\epsilon}, \frac{\alpha_+}{1+\epsilon}])
\to 0$ as $\alpha_-\to 0$, $\alpha_+\to\infty$, $\epsilon\to 0$.
Then, for these fixed values, $a''_n \to 0$ by
\eqref{eq:var 9} and $b''_{2,n,\epsilon}\to 0$ by
\eqref{eq:var 13} as $n\to\infty$. Hence
$\lim_{n \rightarrow \infty}(a''_n+b''_n)=0$, and from
\eqref{eq:var 8} and \eqref{eq:split},
$\lim_{n \rightarrow \infty}\frac{1}{n}
\var[\e(\chi_{mn}^2|\mathcal{Q}_{mn})]=0$.
Following the proof of Lemma~\ref{property xi 2}, we deduce that
$\lim_{n\rightarrow \infty}n\,a_{2,mn}^{\rm Clas}=-\frac{1}{2m}$.

\noindent\textbf{Case 3: mixed distribution ($0<\beta<1$).}
Finally, suppose that $\tau$ has a mixed distribution. 
By applying the law of total expectation to the r.v.
$\mathcal{Q}_{mn}^2\,\var(\tau^{-1}|\mathcal{Q}_{mn})$ yields
%$P_\tau = \beta P_d + (1-\beta) P_c$, with $P_d$ discrete and
%$P_c$ absolutely continuous. Let $E\in\{d,c\}$ be the latent
%component indicator, with $P(E=d)=\beta$, so that
%$\tau|\{E=d\}\sim P_d$ and $\tau|\{E=c\}\sim P_c$.
%Applying the law of total expectation to
%$\mathcal{Q}_{mn}^2\,\var(\tau^{-1}|\mathcal{Q}_{mn})$
%yields 
rel.~\eqref{eq:lim a2}.
%
%$\frac{1}{n}\e\left[\mathcal{Q}_{mn}^2\,
%\var(\tau^{-1}|\mathcal{Q}_{mn})\right]
%=
%\frac{\beta}{n}\e\left[\mathcal{Q}_{mn}^2\,
%\var(\tau^{-1}|\mathcal{Q}_{mn})\;|dle|\; E=d\right]
%+
%\frac{1-\beta}{n}\e\left[\mathcal{Q}_{mn}^2\,
%\var(\tau^{-1}|\mathcal{Q}_{mn})\;|dle|\; E=c\right].
%$
%%\end{equation}
%%
%Conditionally on $E=d$ (resp.\ $E=c$), the pair
%$(\tau,\mathcal{Q}_{mn})$ follows the model of Case~1
%(resp.\ Case~2), so the exponential concentration estimates of
%Lemmas~\ref{conditional probability} and~\ref{conditional}
%apply to the corresponding component of the posterior
%distribution, and the cross term generated by the law of total
%variance is asymptotically negligible. Combining the two
%conditional limits through \eqref{eq:a2class} and
%Lemma~\ref{property xi 2} therefore yields
%rel.~\eqref{eq:lim a2}.
\hfill
\QED
%%%%%%%%%%%%%%%%%%%%%%%%%%%%%%%%%%%%%%%%%%%%%%%%%%%%%%%%%%%%%
%%%%%%%%%%%%%%%%%%%%%%%%%%%%%%%%%%%%%%%%%%%%%
%%%%%%%%%%%%%%%%%%%%%%%%%%%%%%%%%%%%%%%%%%
\section*{Acknowledgement}
Habti Abeida would like to acknowledge the Deanship of
Graduate Studies and Scientific Research, Taif University for
supporting this work.
%

%%%%%%%%%%%%%%%%%%%%%%%%%%%%%%%%%%%%%%%%%%%%%

%%%%%%%%%%%%%%%%%%%%%%%%%%%%%%%%%%
%%%%%%%%%%%%%%%%%%%%%%%%%%%%%%%%%%%%
\
%%%%%%%%%%%%%%%%%%%%%%%%%%%%%%%%%%%%%%%%%%%%%%%%%%%%ù

\begin{thebibliography}{99}

\bibitem{Yao2003}
K. Yao,
"Spherically invariant random processes: Theory and applications,"
in Communications, Information and Network Security, chap. 6, pp. 313-322, 
Springer Science Business Media, New York, 2003.

\bibitem{Bangs1971}
W. J. Bangs, ``Array processing with generalized beamformers,''
Ph.D. dissertation, Yale Univ., New Haven, CT, USA, 1971.

\bibitem{Stoica1997}
P. Stoica and R. Moses,
{\it Introduction to Spectral Analysis},
Upper Saddle River, NJ: Prentice-Hall, 1997.

\bibitem{Delmas2004}
J.-P. Delmas and H. Abeida,
``Stochastic Cram\'er-Rao bound for non-circular signals with application to DOA estimation,''
{\it IEEE Trans. Signal Process.},
vol. 52, no. 11, pp. 3192-3199, Nov. 2004.

\bibitem{Besson2013}
O. Besson and Y. I. Abramovich,
``On the Fisher information matrix for multivariate elliptically contoured distributions,''
{\it IEEE Signal Process. Lett.},
vol. 20, no. 11, pp. 1130-1133, Nov. 2013.

\bibitem{Greco2013}
M. Greco and F. Gini,
``Cram\'{e}r-Rao lower bounds on covariance matrix estimation for complex elliptically symmetric distributions,''
{\it IEEE Trans. Signal Process.},
vol. 61, no. 24, pp. 6401-6409, Dec. 2013.

\bibitem{Abeida2019}
H. Abeida and J.-P. Delmas,
``Slepian-Bangs formula and Cram\'er Rao bound for circular and non-circular complex elliptical symmetric distributions,''
{\it IEEE Signal Process. Letters}, vol. 26, no. 10, pp. 1561-1565, Oct. 2019.

\bibitem{Fortunati2019}
S. Fortunati, F. Gini, M. S. Greco, A. M. Zoubir, and M. Rangaswamy,
``Semiparametric CRB and Slepian-Bangs formulas for complex elliptically symmetric distributions,''
{\it IEEE Trans. Signal Process.}, vol. 67, no. 20, pp.  5352-5364, Oct.  2019.

\bibitem{Fortunati2026}
S. Fortunati, J.-P. Delmas, and E. Ollila,
"Nuisance parameters and elliptically symmetric distributions: a geometric approach to parametric and semiparametric efficiency," 
{\it IEEE Trans. Info. Theory}, vol. 72, no. 9, Sept 2027.

\bibitem{Abeida2023}
H. Abeida and J.-P. Delmas,
"Slepian-Bangs formulas for parameterized density generator of elliptically symmetric distributions,"
{\it Signal Processing}, vol. 205, Jan. 2023.

\bibitem{Whittle1953}
P. Whittle,
``The analysis of multiple stationary time series,"
{\it  Journal of the Royal Statistical Society},
vol. 15, no. 1, pp. 125-139, 1953.

\bibitem{Box1970}
G. E. P. Box and G. M. Jenkins,
{\it Times Series Analysis, Forecasting and Control},
San Francisco, CA: Holden-Day, 1970.

\bibitem{Zeira1990}
A. Zeira and A. Nehorai,
``Frequency domain Cramer Rao bound for Gaussian processes,''
{\it IEEE Trans. ASSP.}, vol. 38, no. 6, pp. 1063-1066, June 1990.

\bibitem{Porat1995}
B. Porat,
``On the Fisher Information for the mean of a Gaussian process,''
{\it IEEE Trans. ASSP.}, vol. 43, no. 8, pp. 2033-2035, Aug. 1995.

\bibitem{Porat1993}
B. Porat,
{\it Digital Processing of random variables},
Hoboken, NJ, USA:
Prentice-Hall (Inc.), 1993.

\bibitem{Gray2006}
R. M. Gray,
{\it Toeplitz and Circulant Matrices: A Review}, The essence of knowledge,
Found. Trends Commun. Inf. Theory, vol. 2, no. 3, pp. 155-239, 2006.

\bibitem{Grenander1958}
U. Grenander and G. Szeg\"o,
{\it Toeplitz forms and their applications},
Chelsea Publishing Compagny, New York, 1958.

\bibitem{Delmas2024}
J.-P. Delmas and H. Abeida,
"Generalization of Whittle's formula to compound-Gaussian processes,"
 {\it IEEE Signal Processing Letters}, vol. 31, pp. 746-750, 2024.
 
 \bibitem{Radaelli2023}
M. Radaelli, G. T. Landi, K. Modi, and F. C Binder,
"Fisher information of correlated stochastic processes,"
{\it New Journal of Physics}, no. 25, June 2023.

\bibitem{Gutierrez2008}
J. Gutierrez-Gutierrez and P. M. Crespo,
"Asymptotically equivalent sequences of matrices and Hermitian block Toeplitz matrices with continuous symbols: Applications to MIMO systems,"
{\it IEEE Trans. Info. Theory}, vol. 54, no. 12, pp. 5671-5680, Dec 2008.

\bibitem{Gutierrez2011}
J. Gutierrez-Gutierrez and P. M. Crespo,
{\it Block Toeplitz Matrices:Asymptotic Results and Applications}, The essence of knowledge,
Found. Trends Commun. Inf. Theory, vol. 8, no. 3, pp. 179-257, 2012.

\bibitem{Gutierrez2019}
J. Gutierrez-Gutierrez,
{\it A modified version of the Pisarenko method to estimate the power spectral density of any asymptotically wide sense stationary vector process}, 
Applied Mathematics and Computation, vol. 362, 2019.

\bibitem{Wise1978}
G.L. Wise and N.B. Gallagher,
"On the spherically invariant random processes,"
{\it IEEE Trans. Info. Theory}
vol. 24, no. 1, pp. 118-120, Jan. 1978.

\bibitem{Yao1973}
K. Yao,
"A representation theorem and its applications to spherically invariant random processes,"
{\it IEEE Trans. Info. Theory}, vol. 19, no. 5, pp 600-608, Sept. 1973.

\bibitem{Andrews1974}
D.F. Andrews and C.I. Mallows,
"Scale mixtures of normal distributions,"
{\it Journal Royal Stat Society B}, vol. 36, no.1, pp. 99-102, 1974.

\bibitem{Bickel1993}
P.J. Bickel, C.A.J Klaassen, Y. Ritov, and J.A. Wellner, 
{\it Efficient and Adaptive Estimation for Semiparametric Models}
Johns Hopkins University Press, 1993.

\bibitem{Therrien1992}
C.W. Therrien,
{\it Discrete random signals ans statistical signal processing},
Englewood Cliffs, Prentice Hall, 1992.

\bibitem{Brockwell1990}
P.J. Brockwell and R.A. Davis,
{\it Times series: Theory and Methods},
Springer Verlag, 1990.

\bibitem{Delmas2026}
J.-P. Delmas and H. Abeida,
"The Gaussian data assumption does not always lead to the largest CRB,"
accepted in {\it IEEE Signal Processing  magazine}, June 2026.

\bibitem{Horn1991}
R.A. Horn and C.R. Johnson,
{\it Topics in Matrix Analysis}
Cambridge University Press, 1991.

\bibitem{Boucheron2013}
S. Boucheron, G. Lugosi, and P. Massart,
{\it Concentration inequalities: a nonasymptotic theory of independence},
Oxford University Press, 2013.








%\bibitem{Gut2004}
%A. Gut,
%{\it Probability: a graduate course},
%Springer texts in Statistics, 2004.




%\bibitem{Olver1974}
%F.W.J. Olver,
%{\it Asymptotic and special functions},
%Academic Press, 1974.


%
\end{thebibliography}
\end{document}

% --- supplement: Supporting_materials.tex ---

\title{Supplemental material}

\maketitle
%%%%%%%%%%%%%%%%%%%%%%%%%%%%%%%%%%%%%%%%%%%%%%%%ù
\setcounter{equation}{112}

%%%%%%%%%%%%%%%%%%%%%%%%%%%%%%%%%%%%%%%%%%%%%%%%%%%%%
\section{Proof of lemmas 1, 2, 3, 4, 5 and 6}
%%%%%%%%%%%%%%%%%%%%%%%%%%%%%%%%%%%%%%%%%%%%%%%%%%%%%%
%
%%%%%%%%%%%%%%%%%%%%%%%%%%%%%%%%%%%%%%%%%%%%%%%%%%%%%%%%%%%%%%
\subsection{Proof of Lemma \ref{property phi}}
\label{Proof of Lemma property phi}
%%%%%%%%%%%%%%%%%%%%%%%%%%%%%%%%%%%%%%%%%%%%%%%%%%%%%%%%%%%%%
%
From the decomposition \eqref{eq:Stochastic representation CG b} and the p.d.f. of the  r.v. $\chi^2_{mn}$, we obtain the conditional p.d.f.
%
\begin{eqnarray}
\nonumber
p_{Q|\tau}(q)
&=&
\tau^{-1}p_{\chi^2_{mn}}\left(\frac{q}{\tau}\right)=
\\
\label{eq:conditonal pdf}
&&
\hspace{-2.6cm}
\frac{1}{2^{mn/2}\Gamma(mn/2)}\!\tau^{-\!mn/2}q^{(mn/2)-\!1}\exp(\!-q/2\tau\!)\mathds{1}_{(0,\infty)}(q).
\end{eqnarray}
%
Using Bayes' rule and the p.d.f. \eqref{eq:Q pdf} of $\mathcal{Q}_{mn}$, we obtain:
%
\begin{eqnarray}
\nonumber
{\rm E}[\tau^{-1}|\mathcal{Q}_{mn}=q]
&=&
\int_0^{\infty}\tau^{-1}\frac{p_{Q|\tau}(q)}{p_Q(q)}dF_{\tau}(\tau)
\\
\label{eq:phi conditonal pdf bis}
&&
\hspace{-1.8cm}
=\frac{
\int_0^{\infty}
\tau^{-(mn/2)-1}\exp(-q/2\tau)
dF_{\tau}(\tau)}
{\int_0^{\infty}
\tau^{-mn/2}\exp(-q/2\tau)
dF_{\tau}(\tau)}.
\end{eqnarray}
%
On the other hand, from the definition \eqref{eq:dephi} of $\varphi(t)$ with the expression \eqref{eq:density generator CG} of $g_{mn}(.)$, differentiation under the integral sign (justified under standard integrability conditions)  yields:
%
\begin{equation}
\label{eq:phi}
\varphi(t)
=
\frac{
\int_0^{\infty}
\tau^{-(mn/2)-1}\exp(-t/2\tau)
dF_{\tau}(\tau)}
{\int_0^{\infty}
\tau^{-mn/2}\exp(-t/2\tau)
dF_{\tau}(\tau)}.
\end{equation}
Comparing \eqref{eq:phi conditonal pdf bis} and \eqref{eq:phi} completes the proof.
\hfill
\QED
%
%%%%%%%%%%%%%%%%%%%%%%%%%%%%%%%%%%%%%%%%%%%%%%%%%%
\subsection{Proof of lemma \ref{property xi 2}}
\label{Proof of lemma property xi 2}
%%%%%%%%%%%%%%%%%%%%%%%%%%%%%%%%%%%%%%%%%%%%
%
From property \ref{property phi} and \eqref{eq:Stochastic representation CG b}, we get:
%
\begin{eqnarray}
\nonumber
\mathcal{Q}_{mn}\varphi(\mathcal{Q}_{mn})
&=&
\mathcal{Q}_{mn}{\rm E}[\tau^{-1}|\mathcal{Q}_{mn}]
={\rm E}[\frac{\mathcal{Q}_{mn}}{\tau}|\mathcal{Q}_{mn}]
\\
\label{eq:Qphi}
&=&
{\rm E}[\chi^2_{mn}|\mathcal{Q}_{mn}].
\end{eqnarray}
%
Squaring both sides and taking expectations yields:
%
\begin{eqnarray}
\nonumber
{\rm E}[\mathcal{Q}^2_{mn}\varphi^2(\mathcal{Q}_{mn})]
&=&
{\rm E}\{({\rm E}[\chi^2_{mn}|\mathcal{Q}_{mn}])^2\}
\\
\label{eq:E(Qphi)2}
&&
\hspace{-3cm}
=
{\rm var}({\rm E}[\chi^2_{mn}|\mathcal{Q}_{mn}])
+\{{\rm E}({\rm E}[\chi^2_{mn}|\mathcal{Q}_{mn}])\}^2,
\end{eqnarray}
%
where
%
\begin{equation}
\label{eq:e e}
\{{\rm E}({\rm E}[\chi^2_{mn}|\mathcal{Q}_{mn}])\}^2
=\{{\rm E}[\chi^2_{mn}]\}^2=(mn)^2
\end{equation}
%
and by the law of total variance.
%
\begin{equation}
\label{eq:var e}
{\rm var}({\rm E}[\chi^2_{mn}|\mathcal{Q}_{mn}])
={\rm var}(\chi^2_{mn})
-{\rm E}[{\rm var}(\chi^2_{mn}|\mathcal{Q}_{mn})]
\end{equation}
%
with ${\rm var}(\chi^2_{mn})=2mn$. 
Plugging \eqref{eq:e e} and \eqref{eq:var e} into \eqref{eq:E(Qphi)2}, we get:
%
\begin{equation}
\label{eq:e Q2 phi2}
{\rm E}[\mathcal{Q}^2_{mn}\varphi^2(\mathcal{Q}_{mn})]
=mn(mn+2)-{\rm E}[{\rm var}(\chi_{mn}^2|\mathcal{Q}_{mn})].
\end{equation}
%
This establishes~\eqref{eq:xi 2} and completes the proof.
\hfill
\QED
%
%%%%%%%%%%%%%%%%%%%%%%%%%%%%%%%%%%%%%%%%%%%%%%%%%%
%%%%%%%%%%%%%%%%%%%%%%%%%%%%%%%%%%%%%%%%%%%%%%ù
%%%%%%%%%%%%%%%%%%%%%%%%%%%%%%%%%%%%%%%%%%%%%%%%%%
\subsection{Proof of Lemma \ref{decompositionSigma}}
%%%%%%%%%%%%%%%%%%%%%%%%%%%%%%%%%%%%%%%
%
Introduce the $n \times n$ shift matrices ${\bf J}_k$ and circulant matrices ${\bf C}_k$ with one on the $k$-th superdiagonal for $k\ge 0$, and 
${\bf J}_{-k}={\bf J}_k^T$ and ${\bf C}_{-k}={\bf C}_k^T$. Then
%
\begin{equation}
\label{eq:Sigma J}
\boldsymbol{\Sigma}_{{\bf y}_n}
=\sum_{k=-q}^q {\bf J}_k\otimes {\bf R}_x(k).
\end{equation}
% 
Using ${\bf W}_n^H{\bf C}_k{\bf W}_n={\rm diag}(1,e^{i2\pi k/n},...,e^{i2\pi (n-1)k/n})$, the Kronecker properties and \eqref{eq:def Diag}, we get:
%
\begin{equation}
\label{eq:EVD circulant}
{\bf W}_{m,n}^H
\left(\sum_{k=-q}^q {\bf C}_k\otimes {\bf R}_x(k)\right){\bf W}_{m,n}
={\rm Diag}({\bf S}_{x,n}).
\end{equation}
% 
Consequently
%
\begin{equation}
\label{eq:WSigmaW Diag}
{\bf W}_{m,n}^H\boldsymbol{\Sigma}_{{\bf y}_n}{\bf W}_{m,n}
-{\rm Diag}({\bf S}_{x,n})
={\bf W}_{m,n}^H\boldsymbol{\Delta}_{n}{\bf W}_{m,n}
\end{equation}
%
with $\boldsymbol{\Delta}_{n}\pardef \sum_{k=-q}^q ({\bf J}_k-{\bf C}_k)\otimes {\bf R}_x(k)$.
\hfill
\QED
%%%%%%%%%%%%%%%%%%%%%%%%%%%%%%%%%%%%%%%%%%%%%%%
\subsection{Proof of Lemma \ref{block Schur}}
%%%%%%%%%%%%%%%%%%%%%%%%%%%%%%%%%%%
%
Let ${\bf u}\pardef ({\bf u}_1^T,..,{\bf u}_k^T,..,{\bf u}_n^T)^T$ where ${\bf u}_k \in \mathbb{C}^m$.
We have
$[{\boldsymbol \Delta}_{n}{\bf u}]_{[i,1]}=\sum_{j=1}^n [{{\boldsymbol \Delta}_{n}}]_{[i,j]}{\bf u}_j$ and from the triangle inequality:
$\|[{\boldsymbol \Delta}_{n}{\bf u}]_{[i,1]}\|_2\le \sum_{j=1}^n\| [{{\boldsymbol \Delta}_{n}}]_{[i,j]}\|_2\|{\bf u}_j\|_2$ which implies
%
\begin{eqnarray}
\nonumber
\|{\boldsymbol \Delta}_{n}{\bf u}\|_2^2
&\le&
\sum_{i=1}^n\left(\sum_{j=1}^n\| [{{\boldsymbol \Delta}_{n}}]_{[i,j]}\|_2\|{\bf u}_j\|_2\right)^2= \|{\bf A}_n{\bf v}\|_2^2,
\\
\label{eq:block Schur inequality1}
&\le&
\|{\bf A}_n\|_2^2\|{\bf v}\|_2^2
=\|{\bf A}_n\|_2^2\|{\bf u}\|_2^2
\end{eqnarray}
% 
where ${\bf A}_n \in \mathbb{R}^{n \times n}$ is defined by $[{\bf A}_n]_{i,j}\pardef \| [{{\boldsymbol \Delta}_{n}}]_{[i,j]}\|_2$ and
${\bf v} \pardef (\|{\bf u}_1\|_2,..,\|{\bf u}_k\|_2,..,\|{\bf u}_n\|_2)^T \in \mathbb{R}^n$.
By definition of the spectral norm, this implies: $\|{\boldsymbol \Delta}_{n}\|_2 \le\|{\bf A}_n\|_2$.
Then applying the Schur inequality 
\cite[Prob:5.6, p:21]{Horn2013} to matrix ${\bf A}_n$, i.e., $\|{\bf A}\|_2 \le \sqrt{\|{\bf A}\|_1 \|{\bf A}\|_{\infty}}$ with 
$\|{\bf A}\|_1=\sup_i \sum_{j=1}^n [{\bf A}_n]_{i,j}$ and  $\|{\bf A}\|_{\infty}=\sup_j \sum_{i=1}^n [{\bf A}_n]_{i,j}$ conclude the proof.
\hfill
\QED
%%%%%%%%%%%%%%%%%%%%%%%%%%%%%%%%%%%%%%%%%%%%%%%
%%%%%%%%%%%%%%%%%%%%%%%%%%%%%%%%%%%%%%%
\subsection{Proof of Lemma~\ref{conditional probability}}
%%%%%%%%%%%%%%%%%%%%%%%%%%%%%%%%%%%
%
Bayes' rule gives
%
\begin{eqnarray}
\nonumber
P(\tau=t_k|\mathcal{Q}_{mn}=q)
&=&
\frac{p_k\, p_Q(q|\tau=t_k)}{\sum_{j=1}^r p_j\, p_Q(q|\tau=t_j)}
\\
\label{eq:Bayes}
&&
\hspace{-1.4cm}
=
\frac{1}{1+\sum_{1\le j\neq k\le r}
\frac{p_j\, t_k\, p_{\chi^2}\!\left(\frac{q}{t_j}\right)}
{p_k\, t_j\, p_{\chi^2}\!\left(\frac{q}{t_k}\right)}},
\end{eqnarray}
%
where we used
$p_Q(q|\tau=t_k)= \frac{1}{t_k}\,p_{\chi^2}\!\left(\frac{q}{t_k}\right)$,
since $\mathcal{Q}_{mn}=t_k\,\chi^2_{mn}$ under $\{\tau=t_k\}$.
It therefore suffices to upper-bound each ratio
$\frac{p_j\, t_k\, p_{\chi^2}(q/t_j)}{p_k\, t_j\, p_{\chi^2}(q/t_k)}$.
Set $u\pardef \frac{t_k}{t_j}$. Since $q \in I_k$, we can write
$q=mn\,t_k\,s$ with $|s-1|<\epsilon$. Substituting the chi-square density
%
\begin{equation}
\label{eq: pdf chi2}
p_{\chi^2}(x)=\frac{1}{2^{mn/2}\Gamma(mn/2)}\,
x^{mn/2-1}e^{-x/2}\,{\bf 1}_{(0,\infty)}(x),
\end{equation}
%
yields
%
\begin{equation}
\label{eq:ratio 1}
\frac{p_{\chi^2}\!\left(\frac{q}{t_j}\right)}
{p_{\chi^2}\!\left(\frac{q}{t_k}\right)}
=u^{-1}e^{\frac{mn}{2}G_s(u)},
\end{equation}
%
with $G_s(u)\pardef \ln(u)-s(u-1)$.

Applying the elementary inequality
$\ln(x) \le x -1-\frac{(x-1)^2}{2\max(x,1)}$, $x>0$, to $G_s(u)$,
and noting that $\max(u,1)\le \frac{t_r}{t_1}$, we obtain
%
\begin{equation}
G_s(u)
\le
\epsilon|u-1|-\frac{t_1}{2t_r}(u-1)^2.
\end{equation}
%
Moreover, since
$|u-1|=\frac{|t_k-t_j|}{t_j}\ge \frac{\Delta}{t_r}$
and $\epsilon<\frac{t_1\Delta}{2t_r^2}$, the function
$x\mapsto \epsilon x-\frac{t_1}{2t_r}x^2$ is decreasing on
$[\frac{\Delta}{t_r},\infty)$; evaluating it at
$x=\frac{\Delta}{t_r}$ gives
%
\begin{equation}
G_s(u)
\le -\frac{\Delta}{t_r}\left(\frac{t_1\Delta}{2t_r^2}-\epsilon\right)
\pardef -2c,
\end{equation}
%
where $c \pardef \frac{\Delta}{2t_r}\left(\frac{t_1\Delta}{2t_r^2}-\epsilon\right)>0$
for any $\epsilon$ chosen in
$\left(0,\frac{t_1\Delta}{2t_r^2}\right)$.
Substituting into \eqref{eq:ratio 1} and using
$u^{-1}=\frac{t_j}{t_k}\le \frac{t_r}{t_1}\pardef C$, we get
%
\begin{equation}
\label{eq:ratio 2}
\frac{p_{\chi^2}\!\left(\frac{q}{t_j}\right)}
{p_{\chi^2}\!\left(\frac{q}{t_k}\right)}
\le C\,e^{-mn\,c}.
\end{equation}
%
Finally, plugging \eqref{eq:ratio 2} into \eqref{eq:Bayes} and
using $\frac{1}{1+x}\ge 1-x$ for $x\ge 0$ yields
\eqref{eq:conditional probability q k} 
after straightforward calculus.
\hfill
\QED
%%%%%%%%%%%%%%%%%%%%%%%%%%%%%%%%%%%%%%%%%%
%%%%%%%%%%%%%%%%%%%%%%%%%%%%%%%
\subsection{Proof of Lemma~\ref{conditional}}
%%%%%%%%%%%%%%%%%%%%%%%%%%%%%%%%%%%
%
Since $\tau$ and $\chi_{mn}^2$ are independent, the change of
variables $(t,w)\mapsto(t,q=tw)$ applied to $(\tau,\chi_{mn}^2)$
yields the joint p.d.f. of $(\tau,\mathcal{Q}_{mn})$:
$p_{\tau,\mathcal{Q}_{mn}}(t,q)=\frac{1}{t}\,p_{\tau}(t)\,
p_{\chi_{mn}^2}\!\left(\frac{q}{t}\right)$.
Hence the conditional p.d.f. of $\tau$ is
%
\begin{equation}
\label{eq:conditional pdf of tau }
p_{\tau}(t|\mathcal{Q}_{mn}=q)
=\frac{\frac{1}{t}\,p_{\tau}(t)\,
p_{\chi_{mn}^2}\!\left(\frac{q}{t}\right)}
{\int_0^{\infty}\frac{1}{u}\,p_{\tau}(u)\,
p_{\chi_{mn}^2}\!\left(\frac{q}{u}\right)du}.
\end{equation}
%
Therefore, with $r \pardef \frac{q}{mn}\in K$, and using
$\chi_{mn}^2=\mathcal{Q}_{mn}/\tau$, we obtain
%
\begin{eqnarray}
\nonumber
{\rm E}\left(\frac{\chi_{mn}^2}{mn}\;|dle|\;\mathcal{Q}_{mn}=q\right)
&=&
\frac{q}{mn}\,{\rm E}(\tau^{-1}|\mathcal{Q}_{mn}=q)
\\
\nonumber
&&
\hspace{-2cm}
=
\frac{q}{mn}
\frac{\int_0^{\infty}\frac{1}{u^2}\,p_{\tau}(u)\,
p_{\chi_{mn}^2}\!\left(\frac{q}{u}\right)du}
{\int_0^{\infty}\frac{1}{u}\,p_{\tau}(u)\,
p_{\chi_{mn}^2}\!\left(\frac{q}{u}\right)du}
\\
\label{eq:conditional mean1}
&&
\hspace{-2cm}
=
\frac{\int_0^{\infty}\frac{1}{x^2}\,p_{\tau}(rx)\,
p_{\chi_{mn}^2}\!\left(\frac{mn}{x}\right)dx}
{\int_0^{\infty}\frac{1}{x}\,p_{\tau}(rx)\,
p_{\chi_{mn}^2}\!\left(\frac{mn}{x}\right)dx},
\end{eqnarray}
%
where the last line follows from the substitution $u=rx$.
Inserting the p.d.f. \eqref{eq: pdf chi2} of $\chi_{mn}^2$ into
\eqref{eq:conditional mean1} and simplifying the common
constants gives
%
\begin{equation}
\label{eq:conditional mean2}
{\rm E}\left(\frac{\chi_{mn}^2}{mn}\;\middle|\;\mathcal{Q}_{mn}=q\right)
=
\frac{\int_0^{\infty}p_{\tau}(rx)\,
x^{-\frac{mn}{2}-1}e^{-\frac{mn}{2x}}dx}
{\int_0^{\infty}p_{\tau}(rx)\,
x^{-\frac{mn}{2}}e^{-\frac{mn}{2x}}dx}.
\end{equation}
%
We now write
$x^{-\frac{mn}{2}}e^{-\frac{mn}{2x}} = e^{-n\phi(x)}$
with the phase function
$\phi(x) \pardef \frac{m}{2}\left(\ln x + \frac{1}{x}\right)$,
which attains its unique minimum at $x_0=1$, with
$\phi''(1)=\frac{m}{2}>0$.
Both the numerator and the denominator of
\eqref{eq:conditional mean2} then take the Laplace-type form
%
\begin{equation}
\label{eq:Laplace form}
\int_0^{\infty}a(r,x)\,e^{-n\phi(x)}dx,
\end{equation}
%
with $a(r,x)=x^{-1}p_{\tau}(rx)$ in the numerator and
$a(r,x)=p_{\tau}(rx)$ in the denominator.
Since $r$ ranges over the compact set $K$ and $p_{\tau}$ is
continuous, the amplitudes $a(r,\cdot)$ and their derivatives
are bounded uniformly in $r\in K$; Laplace's method with a
parameter (see e.g.\ \cite[Chap.~4]{de Bruijn1981}) therefore
yields, uniformly in $r\in K$,
%
\begin{eqnarray}
\nonumber
\int_0^{\infty}a(r,x)\,e^{-n\phi(x)}dx
&&
\\
\label{eq:Laplace expansion}
&&
\hspace{-3cm}
=e^{-n\phi(1)}
\sqrt{\frac{2\pi}{n\,\phi''(1)}}\,
\left(a(r,1)+O(n^{-1})\right).
\end{eqnarray}
%
Noting that $a(r,1)=p_{\tau}(r)$ in both the numerator and the
denominator of \eqref{eq:conditional mean2}, and that the
prefactors $e^{-n\phi(1)}\sqrt{2\pi/(n\phi''(1))}$ cancel in the
ratio, we obtain from \eqref{eq:Laplace expansion}
%
\begin{equation}
\label{eq:conditional mean3}
{\rm E}\left(\frac{\chi_{mn}^2}{mn}\;\middle|\;\mathcal{Q}_{mn}=q\right)
=\frac{p_{\tau}(r)+\frac{u_n(r)}{n}}
{p_{\tau}(r)+\frac{v_n(r)}{n}},
\end{equation}
%
with $u_n(r)$ and $v_n(r)$ uniformly bounded on $K$: there
exists $M_K>0$ such that
$\sup_{r\in K}|u_n(r)|\le M_K$ and
$\sup_{r\in K}|v_n(r)| \le M_K$.
Moreover, since $p_{\tau}$ is continuous and positive on the
compact interval $K$, there exists $m_K>0$ such that
$p_{\tau}(r)\ge m_K$ on $K$. Consequently,
%
\begin{eqnarray}
\nonumber
\left|{\rm E}\left(\frac{\chi_{mn}^2}{mn}\;\middle|\;
\mathcal{Q}_{mn}=q\right)-1\right|
&\le&
\frac{1}{n}\,
\frac{|u_n(r)|+|v_n(r)|}{p_{\tau}(r)+\frac{v_n(r)}{n}}
\\
\label{eq:conditional mean4}
&\le&
\frac{1}{n}\,
\frac{2M_K}{m_K-\frac{M_K}{n}}.
\end{eqnarray}
%
Choosing $n$ large enough so that $\frac{M_K}{n}<\frac{m_K}{2}$,
we get
%
\begin{equation}
\label{eq:conditional mean5}
\left|{\rm E}\left(\frac{\chi_{mn}^2}{mn}\;\middle|\;
\mathcal{Q}_{mn}=q\right)-1\right|
\le
\frac{4}{n}\,\frac{M_K}{m_K},
\end{equation}
%
which concludes the proof with $C_K=\frac{4M_K}{m_K}$ for all
sufficiently large $n$; enlarging $C_K$ if necessary covers the
finitely many remaining values of $n$.
\hfill
\QED
%%%%%%%%%%%%%%%%%%%%%%%%%%%%%%%%%%%%%%%%%%%%%%%
%%%%%%%%%%%%%%%%%%%%%%%%%%%%%%%%%%%%%%%%%%%%%%%%%%
\section{The geometry of the semiparametric CG model}
\label{RES_models}

To simplify the notation and facilitate its use in a more general setting, we consider in this section a CG distributed r.v $\mathbf{x}$ of fixed dimension $m$, whose mean
${\boldsymbol \mu}$ and covariance
${\boldsymbol \Sigma}$ are parameterized by ${\boldsymbol \theta}$,
and hence omit the subscript $m$ from $\mathcal{Q}_m$.

\subsection{Notation}
Let $(\mathcal{X},\mathfrak{B}(\mathcal{X}),P_0)$ be a probability space where the sample space $\mathcal{X}$ is a subset of $\mathbb{R}^m$, $\mathfrak{B}(\mathcal{X})$ is the Borel $\sigma$-algebra on $\mathcal{X}$ and $P_0$ is a probability measure. Moreover, $P_0$ is assumed to be \textit{absolutely continuous} with p.d.f., associated to the Lebesgue measure on $\mathbb{R}^m$, given by $dP_0(\mb{x})=p_0(\mb{x})d\mb{x}$. Let $f:\mathcal{X}\rightarrow \mathbb{R}$ be an $\mathfrak{B}(\mathcal{X})$-measurable function, then $E_0\{f\} \triangleq \int f(\mb{x})dP_0(\mb{x})$ indicates its expectation w.r.t. $P_0$. Let $(\Omega, \mathcal{A},P)$ be a probability space and let $(\Psi, \mathcal{E})$ be a measurable space. Then, two random variables (that is, two measurable functions), $X,\;Y: (\Omega, \mathcal{A})  \rightarrow (\Psi,\mathcal{E})$ are said to be \textit{almost sure equal}, denoted as $X = Y$, if and only if $X(\omega) = Y(\omega)$ for $P$-almost all $\omega \in \Omega$.

Let us now introduce the Hilbert space $(\mathcal{H},\innerprod{\cdot}{\cdot}_{\mathcal{H}})$ as the (infinite-dimensional) linear space of the $\mathfrak{B}(\mathcal{X})$-measurable scalar functions with zero-mean and finite variance:
\begin{equation}\label{H_set}
	\begin{split}
		\mathcal{H} &\triangleq \graffe{h \in L_2(P_0)|\ev{h} = 0}\\
		& = \graffe{h:\mathcal{X}\rightarrow \mathbb{R}|\ev{h} = 0, \ev{h^2}< +\infty},
	\end{split}
\end{equation}
endowed with the canonical inner product 
\begin{equation}
	\label{H_inner_prod}
	\innerprod{h_1}{h_2}_{\mathcal{H}}\triangleq \ev{h_1h_2} = \int_\mathcal{X} h_1(\mb{x})h_2(\mb{x})dP_0(\mb{x}), 
\end{equation}
for all $h_1,h_2 \in \mathcal{H}$. We note that the norm associated to the inner product in \eqref{H_inner_prod} is $\norm{h}_{\mathcal{H}}=\sqrt{\ev{h^2}}$ that is the standard deviation of $h \in \mathcal{H}$.

%\textit{Alert}: In order to simplify the notation, in this section of the supporting material, we made the following changes:
%\begin{itemize}
%	\item in the main text, r.v. of dimension $mn$ are considered, i.e. $\mb{x} \in \mathbb{R}^{mn}$. Here, without any loss of generality, we consider $m$-dimensional r.v., i.e. $\mb{x} \in \mathbb{R}^{m}$. 
%	\item throughout this supporting material, we will omit the subscript $mn$ in $\mathcal{Q}_{mn}$.
%\end{itemize}

Finally, let us recall here the general expression for the semiparametric CG model as introduced in section III of the main paper. Let us indicate as $\mathcal{F}$ the set of all c.d.f. of a (continuous, discrete or mixed) positive r.v. $\tau$. In our study, we will consider the subset:
\begin{equation}
	\label{set_F_c}
	\overline{\mathcal{F}} = \Big\{ \bar{F} \in \mathcal{F} : \int_{0}^{\infty}t d\bar{F}(t)= E\{ \tau\} = 1  \Big\}.
\end{equation}   
Let us now introduce the following quadratic form parameterized by $\bs{\theta}\in \Theta$:
\begin{equation}
	\label{Q_alpha}
	Q_{\bs{\theta}}({\bf x})  = (\mb{x}-\bs{\mu}(\bs{\theta}))^T\bs{\Sigma}(\bs{\theta})^{-1}(\mb{x}-\bs{\mu}(\bs{\theta})).
\end{equation}
The \textit{constrained} semiparametric CG model is defined as: 
\begin{equation}\label{c_CG_sempar_model}
	\mathcal{P}^c_{\bs{\theta},\bar{F}} = \Big\{p(\mb{x}|\bs{\theta},\bar{F})=\int_{0}^{\infty}p(\mb{x}|\bs{\theta},t)d\bar{F}(t) : \bs{\theta} \in \Theta, \bar{F} \in \overline{\mathcal{F}}\Big\},
\end{equation}
\begin{equation}\label{def_C}
	\begin{split}
		p(\mb{x}|\bs{\theta},t) &= (2\pi t)^{-m/2} |\bs{\Sigma}(\bs{\theta})|^{-1/2} \exp(-Q_{\bs{\theta}}(\mb{x})/2 t)\\
		& = \mathcal{N}(\bs{\mu}(\bs{\theta}),  t \bs{\Sigma}(\bs{\theta})).
	\end{split}
\end{equation}

In the following, we indicate as $\bs{\theta}_0 \in \Theta$ and $\bar{F}_0 \in \overline{\mathcal{F}}$ the true (unknown) parameter of interest and the nuisance function, respectively. Consequently, we have that the true data generating density is given by $p_0(\mb{x}) = p(\mb{x}|\bs{\theta}_0,\bar{F}_0)$. Moreover, for further reference, let us introduce the r.v.:
\begin{equation}\label{Q_CG}
	\mathcal{Q} = Q_{\bs{\theta}_0}({\bf x}) = \tau \cdot \chi_m^2.
\end{equation}

%\subsection{Essentials on RES distributions}
%\label{RES_models_essentials}
%A real-valued, random observation vector $\mb{x} \in \mathcal{X} \subseteq \mathbb{R}^m$ is said to be elliptically symmetric distributed if its probability density function (pdf) can be expressed (in the absolutely continuous case) as: 
%\begin{equation}
%\label{RES_pdf}
%p(\mb{x}|\bs{\mu},\bs{\Sigma},g)=|\bs{\Sigma}|^{-1/2} g \left((\mb{x}-\bs{\mu})^T\bs{\Sigma}^{-1}(\mb{x}-\bs{\mu}) \right),
%\end{equation}
%where $\bs{\mu} \in \mathbb{R}^m$ is a location vector, $\bs{\Sigma} \in \mathcal{S}_m^\mathbb{R}$ is an $m \times m$, positive definite, \textit{scatter} matrix in the set $\mathcal{S}_m^\mathbb{R}$ of the symmetric real matrices. \footnote{In this article we will limit ourselves to considering scatter, covariance and shape matrices as elements of the linear subspace of symmetric matrices and not as elements of the manifold of positive definite matrices.} The \textit{density generator} $g \in \mathcal{G}$ is a function belonging to a set $\mathcal{G}$ such that:
%\begin{equation}
%\label{set_G}
%\mathcal{G} = \graffe{ g: \mathbb{R}^{+} \rightarrow \mathbb{R}_0^{+} \left| \delta_m \triangleq \int_{0}^{\infty}t^{m/2-1}g(t)dt = \pi^{-m/2}\Gamma(m/2) \right. }.
%\end{equation}
%where the value of $\delta_m$ is such that \eqref{RES_pdf} is a proper density that integrates to 1. In the following, the notation $\mb{x} \sim RES_m(\bs{\mu},\bs{\Sigma},g)$ indicates that a random vector $\mb{x} \in \mathcal{X}$ has the density given in \eqref{RES_pdf}.
%
%A fundamental result for RES distributed vectors is the \textit{Stochastic Representation Theorem}. Specifically, if $\mb{x} \sim RES_m(\bs{\mu},\bs{\Sigma},g)$ then it can be expressed as:
%\begin{equation}
%\label{SRT_dec}
%\mb{x} =_d \bs{\mu} + \sqrt{\mathcal{Q}}\bs{\Sigma}^{1/2}\mb{u},
%\end{equation}
%where the random vector $\mb{u} \sim \mathcal{U}(S_{\mathbb{R}}^{m-1})$ is uniformly distributed on the unit sphere $S_{\mathbb{R}}^{m-1} \triangleq \{\mb{u}\in \mathbb{R}^m|\norm{\mb{u}}=1\}$ and consequently satisfies $E\{\mb{u}\}=\mb{0}$ and $E\{\mb{u}\mb{u}^T\}=m^{-1}\mb{I}_m$. The positive random variable $\mathcal{Q}$, called \textit{2nd-order modular variate}, is such that (s.t.)
%\begin{equation}
%\label{Q_RES}
%\mathcal{Q} = Q_{\bs{\mu},\bs{\Sigma}}(\mb{x})\triangleq (\mb{x}-\bs{\mu})^T\bs{\Sigma}^{-1}(\mb{x}-\bs{\mu}),\; \mb{x} \in \mathcal{X}
%\end{equation}
%and it is independent of $\mb{u} \sim \mathcal{U}(S_{\mathbb{R}}^{m-1})$. Moreover, $\mathcal{Q}$ has pdf given by:
%\begin{equation}
%\label{Q_pdf}
%p_{\mathcal{Q}}(q) = \delta_m^{-1} q^{m/2-1} g (q).
%\end{equation}
%%For further reference, we may introduce the \textit{modular variate} $\mathcal{R} \triangleq \sqrt{\mathcal{Q}}$ that is a positive random variable with pdf: 
%%\begin{equation}
%%	\label{R_pdf}
%%	p_{\mathcal{R}}(r) = 2\delta_m^{-1} r^{m-1} g (r^2).
%%\end{equation}
%%\JP{(I'm not sure if this expression of  \eqref{R_pdf} will be useful, otherwise it should be withdrawn)}
%
%It is immediate to verify that the definition of elliptical density suffers from a lack of indentifiability for the couple $(\bs{\Sigma},g)$. Specifically, we can easily note that $RES_{m}(\bs{\mu},\bs{\Sigma},g(t)) \equiv RES_{m}(\bs{\mu},c^{m/2}\bs{\Sigma},g(ct)), \forall c>0$. To avoid this ambiguity, we may decide to put a constraint on the \virg{functional form} of the density generator $g$. In particular, we force the density generator to belong to the following set:  
%\begin{equation}
%	\label{set_G_c}
%	\overline{\mathcal{G}} = \graffe{ \bar{g} \in \mathcal{G} \left| \delta_m^{-1} \int_{0}^{\infty}q^{m/2}\bar{g}(q)dq= E\{\mathcal{Q}\} = m \right. }.
%\end{equation}
%Note that, from the stochastic representation \eqref{SRT_dec}, the properties of $\mb{u}\sim \mathcal{U}(S_{\mathbb{R}}^{m-1})$ and the fact that $\mathcal{Q}$ is independent of $\mb{u}$, the constraint $E\{\mathcal{Q}\} = m$ is equivalent to:
%\begin{equation}\label{scatter_cov}
%	E\{(\mb{x}-\bs{\mu})(\mb{x}-\bs{\mu})^T\} = E\{\mathcal{Q}\} \bs{\Sigma}^{1/2} E\{\mb{u}\mb{u}^T\}\bs{\Sigma}^{T/2} = \bs{\Sigma}.
%\end{equation}
%i.e., the scatter matrix can be directly interpreted as the usual covariance matrix. 

%\subsection{Essentials on CG distributions}
%\label{CG_models_essentials}
%A real-valued, random observation vector $\mb{x} \in \mathcal{X} \subseteq \mathbb{R}^m$ is said to be CG distributed if its density generator can be expressed as:
%\begin{equation}\label{g_CG}
%	c_m(t) = (2\pi)^{-m/2}\int_{0}^{\infty} \tau^{-m/2}\exp(-t/2\tau)dF(t),
%\end{equation}
%where $F$ is the cdf of the positive rv $\tau$, generally called texture. A CG random vector can be represented as:
%\begin{equation}
%	\label{SRT_CG}
%	\mb{x} =_d \bs{\mu} + \sqrt{\tau}\bs{\Sigma}^{1/2}\mb{n}_0,
%\end{equation}
%where the random vector $\mb{n}_0 \sim \mathcal{N}_m(\mb{0},\mb{I})$ is independent of $\tau$. This is clearly a particular case of \eqref{SRT_dec} with:
%\begin{equation}\label{Q_CG}
%	\mathcal{Q} = \tau \cdot \chi_m^2,
%\end{equation}
%where $\chi_m^2$ indicates a $\chi$-squared random variable with $m$ degrees of freedom, independent of $\tau$. Consequently,
%\begin{equation}
%	E\{\mathcal{Q}\} = m \Leftrightarrow E\{\tau\} = 1.
%\end{equation}
%
%In the following, we will consider that the quadratic form \eqref{Q_RES} is parameterized by $\bs{\theta}\in \Theta$:
%\begin{equation}
%	\label{Q_alpha}
%	(\mb{x}-\bs{\mu}(\bs{\theta}))^T\bs{\Sigma}(\bs{\theta})^{-1}(\mb{x}-\bs{\mu}(\bs{\theta})) =Q_{\bs{\theta}}({\bf x}) = \tau \cdot \chi_m^2\; \forall \bs{\theta} \in \Theta.
%\end{equation}
%
%Then, the \textit{unconstrained} semiparametric CG model can be cast as:
%\begin{equation}\label{u_CG_sempar_model}
%	\mathcal{P}^u_{\bs{\theta},F} = \graffe{p(\mb{x}|\bs{\theta},F)=\int_{0}^{\infty} p(\mb{x}|\bs{\theta},t)dF(t) : \bs{\theta} \in \Theta, F \in \mathcal{F}},
%\end{equation}
%where
%\begin{equation}\label{def_C}
%	p(\mb{x}|\bs{\theta},\tau = t)=p(\mb{x}|\bs{\theta},t) = (2\pi t)^{-m/2} |\bs{\Sigma}(\bs{\theta})|^{-1/2} \exp(-Q_{\bs{\theta}}(\mb{x})/2 t) = \mathcal{N}(\bs{\mu}(\bs{\theta}),  t \bs{\Sigma}(\bs{\theta})),
%\end{equation}
%and $\mathcal{F}$ is the set of all cdfs of a continuous, discrete or mixed positive random variables.
%
%Moreover, we can define the \textit{constrained} semiparametric CG model as 
%\begin{equation}\label{c_CG_sempar_model}
%	\mathcal{P}^c_{\bs{\theta},\bar{F}} = \graffe{p(\mb{x}|\bs{\theta},\bar{F})=\int_{0}^{\infty} p(\mb{x}|\bs{\theta},t)d\bar{F}(t) : \bs{\theta} \in \Theta, \bar{F} \in \overline{\mathcal{F}}},
%\end{equation}
%where:
%\begin{equation}
%	\label{set_F_c}
%	\overline{\mathcal{F}} = \graffe{ \bar{F} \in \mathcal{F} \left| \int_{0}^{\infty}t\ d\bar{F}(t)= E\{ \tau\} = 1 \right. }.
%\end{equation}

\subsection{The geometry of $\mathcal{P}^c_{\bs{\theta},\bar{F}}$}

Let us introduce the following two parametric models:
%\begin{equation}\label{P_theta}
%	\mathcal{P}_{\bs{\theta}} \triangleq \graffe{p(\mb{x}|\bs{\theta})= G_{\bs{\theta},\tau_0}(\mb{x}): \bs{\theta} \in \Theta},
%\end{equation}
\begin{equation}\label{P_tau}
	\mathcal{P}_{ t} \triangleq \graffe{p(\mb{x}|\bs{\theta}_0,  t)= \mathcal{N}(\bs{\mu}(\bs{\theta}_0),  t \bs{\Sigma}(\bs{\theta}_0)):  t \in \Omega},
\end{equation}
\begin{equation}\label{P_theta_tau}
	\mathcal{P}_{\bs{\theta}, t} \triangleq \graffe{p(\mb{x}|\bs{\theta}, t)= \mathcal{N}(\bs{\mu}(\bs{\theta}),  t \bs{\Sigma}(\bs{\theta})): \bs{\theta} \in \Theta,  t \in \Omega}.
\end{equation}
where $\Omega$ is an open subset of $\mathbb{R}^+$ for continuous r.v., or of $\mathbb{N}$ for discrete r.v. or a union of (open) subset from $\mathbb{R}^+$ and subset of $\mathbb{N}$ for mixed r.v.. 

\subsection{Regularity conditions}
In the following we will always assume the following set of regularity conditions \cite[Th. 2, Sect. 4.5]{BKRW}:
\begin{itemize}
	\item[A1]  For $F$-almost all $ t \in \Omega$ we have that:
	\begin{itemize}
		\item[\textit{(i)}] for $P_0$-almost all $ \bs{\alpha} \in \mathcal{X}$, $p( \bs{\alpha}|\bs{\theta},  t)$ is continuously differentiable in $\bs{\theta} \in \Theta$ with gradient $\nabla_{\bs{\theta}} p( \bs{\alpha}|\bs{\theta},  t)$. Then, we can define the score vector as $\mb{s}_{\bs{\theta}, t}(\mb{x}) \triangleq \nabla_{\bs{\theta}} \ln p( \mb{x}|\bs{\theta},  t)$.
		\item[\textit{(ii)}] $\e{\norm{\mb{s}_{\bs{\theta}, t}(\mb{x})}^2}< \infty$,
		\item[\textit{(iii)}] The Fisher Information Matrix 
		\begin{equation}
			\begin{split}
				I_ t(\bs{\theta}) &\triangleq \ev{\mb{s}_{\bs{\theta}, t}(\mb{x})\mb{s}_{\bs{\theta}, t}(\mb{x})^T} \\
				&=\int \mb{s}_{\bs{\theta}, t}( \bs{\alpha})\mb{s}_{\bs{\theta}, t}^T( \bs{\alpha})p( \bs{\alpha}|\bs{\theta},  t)d \bs{\alpha}
			\end{split}
		\end{equation}
		is non-singular and continuous in $\bs{\theta} \in \Theta$. 
	\end{itemize}
	Let us now introduce the quantity:
	\begin{equation}
		J(\bs{\theta}, t) \triangleq \trace{I_ t(\bs{\theta})} = \int \frac{\norm{\nabla_{\bs{\theta}} p( \bs{\alpha}|\bs{\theta},  t)}^2}{p( \bs{\alpha}|\bs{\theta},  t)}d \bs{\alpha}.
	\end{equation}
	\item[A2] $\int J(\bs{\theta},t)dF(t)<\infty$ for all $\bs{\theta} \in \Theta$ and the map $\bs{\theta} \mapsto \int J(\bs{\theta},t)dF(t)$ is continuous for all $F \in \mathcal{F}$,
	\item[A3] The Fisher Information Matrix $I_{F}(\bs{\theta})$ for $\bs{\theta}$ in $\mathcal{P}^u_{\bs{\theta},F}$ with $F$ fixed, i.e.:
	\begin{equation}
		I_{F}(\bs{\theta}) \triangleq \int \frac{\nabla_{\bs{\theta}}p( \bs{\alpha}|\bs{\theta},F) \nabla^T_{\bs{\theta}}p( \bs{\alpha}|\bs{\theta},F)}{p( \bs{\alpha}|\bs{\theta},F)} d \bs{\alpha},
	\end{equation}
	is non-singular for all $F \in \mathcal{F}$.
\end{itemize}

\subsection{Sufficiency and completeness}

\begin{definition}
	Let $\mathcal{P}_{ t}$ be the parametric model in \eqref{P_tau}. A statistic $T_0 = T_{\bs{\theta}_0}(\mathbf{x})$ is sufficient for $ t$ if the conditional distribution of $\mathbf{x}$ given $T_{0}$ does not depend on $ t$. Equivalently, by the Factorization Theorem \cite{fact_theo}, $T_{\bs{\theta}_0}$ is sufficient if the joint density $p(\mb{x}|\bs{\theta}_0,  t)$ can be factored as:
	\begin{equation}
		p(\mb{x}|\bs{\theta}_0,  t) = h_{\bs{\theta}_0}(\mathbf{x}) g_{\bs{\theta}_0}(T_{\bs{\theta}_0}(\mathbf{x}),  t),
	\end{equation}
	where $h$ does not depend on $ t$.
\end{definition}

\begin{definition}
	Let $\mathcal{P}_{ t}$ be the parametric model in \eqref{P_tau}. If $\mb{x} \sim \mathcal{P}_{ t}$, we say that a statistic $T_0 = T_{\bs{\theta}_0}(\mathbf{x})$ is $F$-\textit{strongly complete} \cite[Def. 1, Sect. 4.5]{BKRW} for a given c.d.f. $F \in \mathcal{F}$ if for any measurable function $w$
	\begin{equation}
		P^{\tau}\tonde{\Big\{ E_t\{ w(T_0) \} =0 \Big\}} = 1 \Rightarrow  P^{X|t}\tonde{\big\{ w(T_0) =0\big\}} =1,
	\end{equation}
	for all $ t \in \Omega$. Note that:
	\begin{equation}\label{comp_appo}
		E_t\{ w(T_0) \} \triangleq \int w(T_{\bs{\theta}_0}(\bs{\alpha}))p(\bs{\alpha}|\bs{\theta}_0,t)d\bs{\alpha},
	\end{equation}
	\begin{equation}
		P^{\tau}(A) \triangleq \int 1_{A}(t) dF(t),
	\end{equation}
	and
	\begin{equation}
		P^{X|t}(A) \triangleq  \int 1_{A}(\bs{\alpha}) p(\bs{\alpha}|\bs{\theta}_0,t)d\bs{\alpha}.
	\end{equation}
	In words, this means that if $E_t\{w(T_0)\} = 0$ for $F$-almost all $t$, then $w(T_0)$ must be zero $P_0$-almost surely. \footnote{Note in fact that the condition $P^{X|t}\tonde{A} =1,\; \forall t \in \Omega$ implies $P^0(A) = \int 1_{A}(\bs{\alpha}) p(\bs{\alpha}|\bs{\theta}_0,F_0)d\bs{\alpha} = 1$.}
\end{definition}

\begin{lemma}\label{lemma_suf_comp}
	Let us consider the parametric model  $\mathcal{P}_{ t}$ in \eqref{P_tau}. Then, the statistic $\mathcal{Q} = Q_{\bs{\theta}_0}({\bf x})$ in \eqref{Q_CG} is:
	\begin{itemize}
		\item sufficient for $ t$ in $\mathcal{P}_{ t}$,
		\item $F$-\textit{strongly complete} for any $F \in \mathcal{F}$
	\end{itemize}
\end{lemma}
\begin{proof}
	The sufficiency is trivial. In fact, from the definition of $p(\mb{x}|\bs{\theta},  t)$ in \eqref{def_C}, we immediately have that 
	\begin{equation}
		p(\mb{x}|\bs{\theta}_0, t) = (2\pi t)^{-m/2} |\bs{\Sigma}(\bs{\theta}_0)|^{-1/2} \exp(-Q_{\bs{\theta}_0}({\bf x})/2 t),
	\end{equation}
	then form the Factorization theorem $Q_{\bs{\theta}_0}({\bf x})$ is sufficient for $ t$ in $\mathcal{P}_{ t}$. For further reference, let us introduce the p.d.f. $p(\mathcal{Q}|\bs{\theta}_0, t)$ that can be obtained from $p(\mb{x}|\bs{\theta}_0, t)$ by using \cite[Th. 12.6 and Ex. 2]{Jacod}.
	
	Let us now focus on the completeness. From \eqref{Q_CG}, we have that $\mathcal{Q}| \tau = t \sim t \cdot \chi_m^2$. Consequently, from the properties of $\chi_m^2$, the distribution of $\mathcal{Q}$ given $\tau$ is:
	\begin{equation}
		\mathcal{Q}| \tau = t \sim \text{Gam}(m/2,2t).
	\end{equation}
 	%Now, \cite[Ex. 3.5]{chi_comp} shows that $\alpha \sim \chi_m^2$ is complete, in the sense the scat that if $\e{w(\alpha)} =0$ for some measurable function $w$, then $w(\alpha) = 0$ almost surely. 
 	Since, it is well known (see e.g. \cite[Ex. 3.5]{chi_comp}) the (one-parameter) Gamma distributions (here the parameter to consider is $\tau>0$) form complete family, we can immediately conclude about $F$-\textit{strongly completeness} of $\mathcal{Q}$. Here a sketch of the proof. Let us suppose that $ E_t\{w(\mathcal{Q})\} = 0$ for $F$-almost all $t$. Then, from \eqref{comp_appo} and through the change of variable $u = Q_{\bs{\theta}_0}(\mathbf{x})$, we have:
 	\begin{equation}\label{expe_appo}
 	\int w(u) p_{\mathcal{Q}|\tau}(u | t) du = 0 \quad \text{for } F\text{-almost all } t,
 	\end{equation}
 	where $p_{\mathcal{Q}|\tau}(u | t) = \frac{u^{m/2-1} e^{-u/2t}}{\Gamma(m/2) (2t)^{m/2}}$ is the Gamma density. Then the integral in \eqref{expe_appo} can be rewritten as:
 	\begin{equation}
 		\int_0^\infty w(u) u^{m/2-1} e^{-u /2t} du = 0 \quad \text{for } F\text{-almost all } t.
 	\end{equation} 
 	By posing $s = 1/2t> 0$, we can immediately verify that this integral is the Laplace transform of the function $h(u) = w(u) u^{m/2-1}$, i.e.\ $\mathcal{L}\{h\}(s)$. It is well-known now that the Laplace transform is injective (one-to-one). Thus:
 	\begin{equation}
 	\mathcal{L}\{h\}(s) = 0  \implies h(u) = 0 \quad \text{a.e.}
 	\end{equation}
 	 This implies $w(u) = 0$ for almost all $u>0$. 
\end{proof}

\subsection{Nuisance tangent space and projection operator in $\mathcal{P}^c_{\bs{\theta},F}$}

\begin{proposition}\label{prop_c} Let $\mathcal{P}^c_{\bs{\theta},F}$ in \eqref{c_CG_sempar_model} be the constrained CG semiparametric model. Then, under A1-A3, the (infinite-dimensional) nuisance tangent space $\mathcal{T}^c_{\bar{F}_0}$ at $\bar{F}_0 \in \overline{\mathcal{F}}$ is given by:
	\begin{equation}\label{Tc_G0_1}
			\mathcal{T}^c_{\bar{F}_0} = \graffe{h \in \mathcal{H}\mid h\;\text{is $\sigma(\mathcal{Q})$-measurable},\; \ev{\mathcal{Q}h(\mathcal{Q})} =0},
			\end{equation}
	where $\sigma(\mathcal{Q})\subset \mathfrak{B}(\mathcal{X})$ is the sub-$\sigma$-algebra generated by the random variable $\mathcal{Q} =Q_{\bs{\theta}_0}({\bf x})$ in \eqref{Q_CG}.
	
	Moreover, we have that the orthogonal projection of a generic element $h \in \mathcal{H}$ onto $\mathcal{T}^c_{\bar{F}_0}$ can be obtained as: \footnote{It is important to note that for a generic $f \in L_2(P_0)$, this projection is $\Pi(f|\mathcal{T}^c_{\bar{F}_{0}}) = E\{f|\mathcal{Q}\} - E\{f\}$.}
	\begin{equation}\label{proj_cg}
		\Pi(h|\mathcal{T}^c_{\bar{F}_0})  = E\{h|\mathcal{Q}\}- \eg{\mathcal{Q}h}\sigma_{\mathcal{Q}}^{-2}(\mathcal{Q}-m), \quad \forall h\in \mathcal{H}.
		\end{equation}
		where $\sigma_{\mathcal{Q}} \triangleq \sqrt{\eg{(\mathcal{Q}-m)^2}}$ and $\mathcal{Q} = \tau \cdot \chi_m^2$.
\end{proposition}
\begin{proof}
	Let us start by introducing an $i$-\textit{th} (local) parametric sub-model of $\mathcal{P}^c_{\bs{\theta},F}$ as:
	\begin{equation}
		\label{sub_par}
		\mathcal{P}^c_{\bar{F}_{\bs{\rho},i}}  \triangleq \graffe{p(\mb{x}|\bs{\theta}_0,\bar{F}_{\bs{\rho},i}), \bs{\rho} \in \Upsilon_i \subseteq (-\epsilon,\epsilon)^{r_i}},
	\end{equation}
	where:
	\begin{equation}
		\begin{split}
			\bar{F}_{\bs{\rho},i}: \, & \mathbb{R}^+ \times \Upsilon_i \rightarrow \overline{\mathcal{F}} \\
			& \bs{\rho} \mapsto \bar{F}_{i}(t,\bs{\rho}),
		\end{split}
	\end{equation}
	is a \textit{known} function parametrized by an (artificial) \textit{unknown} finite-dimensional vector $\bs{\rho}$. In particular, for every smooth parametric map $F_{i}(\cdot,\bs{\rho}) : \Upsilon_i \rightarrow \overline{\mathcal{F}}$, $\mathcal{\mathcal{P}}^c_{\bar{F}_{\bs{\rho},i}}$ in \eqref{sub_par} is a parametric model satisfying the following two conditions \cite[Sec. 4.2]{Tsiatis}:
	\begin{description}
		\item[{\normalfont C1)}] $\mathcal{P}^c_{\bar{F}_{\bs{\rho},i}} \subseteq \mathcal{P}^c_{\bs{\theta}_0,\bar{F}}, \; \forall i \in \mathbb{N}$,
		\item[{\normalfont C2)}] $p(\mb{x}|\bs{\theta}_0,\bar{F}_0) \in \mathcal{P}^c_{\bar{F}_{\bs{\rho},i}}$, i.e. $\forall i \in \mathbb{N}$ there exists a vector $\bs{\rho}_{0} \in \Upsilon_i$ such that $p(\mb{x}|\bs{\theta}_0,F_{i,\bs{\rho}_0}) = p(\mb{x}|\bs{\theta}_0,\bar{F}_0)$.
	\end{description}
	The purpose of using a parametric sub-model lies in the fact that its tangent space is well-defined as:
	\begin{equation}
		\label{ts_sub}
		\mathcal{H}  \supseteq  \mathcal{T}^c_{i,\bs{\rho}_0} \triangleq \mathrm{Span}\{[\mb{s}_{\bs{\rho}_0,i}]_1, \ldots, [\mb{s}_{\bs{\rho}_0,i}]_{r_i}\},
	\end{equation} 
	where $\mb{s}_{\bs{\rho}_0,i} =\nabla_{\bs{\rho}} \ln p(\mb{x}|\bs{\theta}_0,\bar{F}_{\bs{\rho}_0,i})$. Consequently, according to \cite[Sect. 3.2, Def. 2]{BKRW} and \cite[Sec. 4.4]{Tsiatis}, the tangent space $\mathcal{T}^c_{\bar{F}_{0}}$ can be defined as the closure in $\mathcal{H}$ of the union of all the (parametric) tangent spaces $\mathcal{T}^u_{i,\bs{\rho}_{0}}$:
	\begin{equation}
		\label{semi_tangent_space_def}
		\mathcal{T}^c_{\bar{F}_0} =  \overline{\bigcup\nolimits_{i \in \mathbb{N}}\mathcal{T}^c_{i,\bs{\rho}_{0}}} \subseteq \mathcal{H}.
	\end{equation}
	Equivalently, $\mathcal{T}^c_{\bar{F}_0} \subseteq \mathcal{H}$ is the subspace of $\mathcal{H}$ composed by all the functions $h \in  \mathcal{H}$ for which there exists a sequence 
	$\{\mb{c}_i^T\mb{s}_{\bs{\rho}_0,i}, \mb{c}_i \in \mathbb{R}^{r_i}\}_{i \in \mathbb{N}}$  such that $\norm{h-\mb{c}_i^T\mb{s}_{\bs{\rho}_{0,i}}}^2 = E_0\graffe{(h-\mb{c}_i^T\mb{s}_{\bs{\rho}_{0,i}})^2} \rightarrow 0$. 
	
	Now that we have the theoretical and formal framework, let us go back to the application at hand. Specifically, we have to show that $\mathcal{T}^c_{\bar{F}_0}$ can actually be expressed as in \eqref{Tc_G0_1}.

	The proof follows, with the necessary modifications, the one in \cite[Th. 1, Sect. 4.5]{BKRW}. Specifically, we need to show that:
		\begin{itemize}
			\item[\textit{i})] Any element of $\mathcal{T}^c_{i,\bs{\rho}_0}, \forall i \in \mathbb{N}$ is an element of $\mathcal{T}^c_{\bar{F}_0}$, i.e. $\mathcal{T}^c_{i,\bs{\rho}_0} \subseteq \mathcal{T}^c_{\bar{F}_0},\forall i \in \mathbb{N}$,
			\item[\textit{ii})] Any element of $\mathcal{T}^c_{\bar{F}_0}$ can be expressed as an element of a given $\mathcal{T}^c_{\tilde{i},\bs{\rho}_0}$, for some $\tilde{i} \in \mathbb{N}$, or as a converging sequence of such elements, and then that there exists $\tilde{i} \in \mathbb{N}$ such that $\mathcal{T}^c_{\bar{F}_0} \subseteq \mathcal{T}^c_{\tilde{i},\bs{\rho}_0}$.
		\end{itemize} 
		
		Here some preliminary results. Suppose that $(\tau,\mb{x})$ is distributed so that $\tau \sim \bar{F}_0$ and $\mb{x}|\tau=t$ as density $p(\cdot|\bs{\theta}_0,t)$. Then, under A1-A3, each element of $\mathcal{T}^u_{i,\bs{\rho}_0}, \forall i \in \mathbb{N}$ is of the form:
		\begin{equation}\label{s_X_appo}
			\begin{split}
				\mb{s}_{\bs{\rho}_0,i}(\mb{x}) &=  \frac{\int_{0}^{\infty} \mb{a}_{\bs{\rho}_0,i}(t)p(\mb{x}|\bs{\theta}_0,t)d\bar{F}_{0}(t)}{p(\mb{x}|\bs{\theta}_0,\bar{F}_0)}\\
				&=\int_{0}^{\infty}\mb{a}_{\bs{\rho}_0,i}(t)	p(t|\mb{x},\bs{\theta}_0,\bar{F}_0)dt = E\{\mb{a}_{\bs{\rho}_0,i}(\tau)|\mb{x}\},
			\end{split}
		\end{equation}
		where:
		\begin{equation}
			p(t|\mb{x},\bs{\theta}_0,\bar{F}_0) = \frac{p(\mb{x}|\bs{\theta}_0,t)\bar{f}_{0}(t)}{p(\mb{x}|\bs{\theta}_0,\bar{F}_0)}
		\end{equation}
		\begin{equation}\label{a_fun}
			\mb{a}_{\bs{\rho}_0,i}(t) \triangleq \nabla_{\bs{\rho}} \ln \bar{f}_{\bs{\rho}_0,i}(t),
		\end{equation}
		and $\bar{f}_{\bs{\rho}_0,i}$ is the density of the cdf $\bar{F}_{\bs{\rho}_0,i}$, i.e.\ $\bar{f}_{\bs{\rho}_0,i} = d\bar{F}_{\bs{\rho}_0,i}/d\lambda$.
		
		Under A1-A3 that allow us to invert the order of differentiation and integration, we have that:
		\begin{equation}\label{E_appo}
			\begin{split}
				\ev{\mb{s}_{\bs{\rho}_0,i}(\mb{x})}  &= E\{\mb{a}_{\bs{\rho}_0}(\tau)\} = \nabla_{\bs{\rho}}\left. \int_{0}^{\infty}\bar{f}_{\bs{\rho},i}(t)dt\right|_{\bs{\rho}= \bs{\rho}_0}\\
				&  =\nabla_{\bs{\rho}} 1 = \mb{0},
			\end{split}
		\end{equation}
		\begin{equation}\label{EE_appo}
			\begin{split}
				&\ev{Q_{\bs{\theta}_0}({\bf x})\mb{s}_{\bs{\rho}_0,i}(\mb{x})} = \e{\chi_m^2}E\{\tau \mb{a}_{\bs{\rho}_0}(\tau)\} \\
				&\qquad= m\nabla_{\bs{\rho}}\left. \int_{0}^{\infty}t \bar{f}_{\bs{\rho},i}(t)dt\right|_{\bs{\rho}=\bs{\rho}_0} =m\nabla_{\bs{\rho}} 1 =\mb{0}.
			\end{split}
		\end{equation} 
		
		Then, we can provide a general form of each $i$-th (parametric) tangent space as:
		\begin{equation}
			\mathcal{T}^c_{i,\bs{\rho}_0} = \graffe{v \in \mathrm{Span}\{\mb{s}_{\bs{\rho}_0,i}\}|\mb{s}_{\bs{\rho}_0,i}(\mb{x}) = E\{\mb{a}_{\bs{\rho}_0,i}(\tau)|\mb{x}\}}.
		\end{equation}
		  
		%where the last equality follows from the fact that $\bar{F}_{\bs{\rho},i} \in \overline{\mathcal{F}}, \forall i \in \mathbb{N}$. 
		
		Let us now focus our attention to a specific one-dimensional parametric sub-model 
		\begin{equation}
			\label{sub_par_tilde}
			\mathcal{P}^c_{\bar{F}_{\rho,\tilde{i}}}  \triangleq \graffe{p(\mb{x}|\bs{\theta}_0,\bar{F}_{\rho,\tilde{i}}), \rho \in  (-\epsilon,\epsilon)},
		\end{equation}
		where $\bar{f}_{\rho,\tilde{i}}(t)$, that is the parameterized density associated to $\bar{F}_{\rho,\tilde{i}}$, has the following form:
		\begin{equation}\label{f_sub}
			\bar{f}_{\rho,\tilde{i}}(t) = \frac{\bar{f}_0(t)\exp(\rho b (t))}{\int_0^\infty \bar{f}_0(t)\exp(\rho b(t))dt},
		\end{equation} 
		where $b \in L_2(\bar{F}_0)$ is a bounded function such that $\e{b(\tau)} = 0$, then from \cite[Th. 9.5]{Jacod}, $b(\tau) \in \mathcal{H}$ and such that $\e{\tau b(\tau)} = 0$. This represent a proper parametric sub-model since it contains true density $\rho_0 = 0$. Moreover we have that $\bar{F}_{\rho,\tilde{i}} \in \overline{\mathcal{F}}$. In fact:
		\begin{itemize}
			\item  $\bar{f}_{\rho,\tilde{i}}(t) >0$ for all $\rho$. Moreover, since $b$ are bounded, the integral at the denominator of \eqref{f_sub} is finite, then $\int_{0}^{\infty}\bar{f}_{\rho,\tilde{i}}(t)dt = 1$, for all $\rho$.
			\item By using the fact that, for $\rho\rightarrow 0$, $\exp(\rho b(t)) = 1 + \rho b(t) + o(\rho b(t))$, we have:
			\begin{equation}
				\int_{0}^{\infty}t\bar{f}_{\rho,\tilde{i}}(t)dt \approx 1 + \rho \e{\tau b(\tau)} = 1,\; \forall \rho,
			\end{equation}
			where we need that $\e{\tau b(\tau)} = 0$.
		\end{itemize}
		Through direct calculation, from \eqref{a_fun}, we have that:
		\begin{equation}\label{b_fun}
			a_{\rho_0}(t) \triangleq \left. \frac{d}{d \rho} \ln f_{\rho,\tilde{i}}(t)\right|_{\rho_0 = 0} = b(t) - \e{b(\tau)} = b(t).
		\end{equation} 
		
		Then, the (parametric) tangent space associated to $\mathcal{P}^c_{\bar{F}_{\rho,\tilde{i}}}$ can be cast as:
		\begin{equation}\label{set_V}
			\begin{split}
				\mathcal{T}^c_{\tilde{i},\rho_0} \triangleq& \left\lbrace v \in \mathcal{H}| v(\mb{x}) = E\{b(\tau)|\mb{x}\},\; b \in L_2(\bar{F}_0),\right. \\
				&\qquad\left.  \e{b(\tau)} = 0,\;\e{\tau b(\tau)} = 0,\; \tau \sim \bar{F}_0\right\rbrace .
			\end{split}
		\end{equation}
	
		Let us now go back to the proof of the Proposition \ref{prop_c}. As usual let us split the proof on two parts.
			\begin{itemize}
				\item Part \textit{i}): $\mathcal{T}^c_{i,\bs{\rho}_0} \subseteq \mathcal{T}^c_{\bar{F}_0},\forall i \in \mathbb{N}$.
				
				Since, as shown in Lemma \ref{lemma_suf_comp} the statistic $\mathcal{Q} = Q_{\bs{\theta}_0}({\bf x})$ is sufficient for $t$ in $\mathcal{T}_t$, we have that:
				\begin{equation}
					\begin{split}
						p(\mb{x}|\bs{\theta}_0,\bar{F}_0) &= \int_{0}^{\infty} p(\mb{x}|\bs{\theta}_0,t)d\bar{F}_0(t) \\
						& = \int_{0}^{\infty} p_{\mathcal{Q}|\tau}(u|\bs{\theta}_0,t)f_{i,\bs{\rho}_0}(t)dt \\
						&= p(\mathcal{Q}|\bs{\theta}_0,\bar{F}_0).
					\end{split}
				\end{equation}  
				where we have used the change of variable $u=Q_{\bs{\theta}_0}({\bf x})$. Then from the previous result \eqref{s_X_appo}, we have that each element of $\mathcal{T}^u_{i,\bs{\rho}_0}, \forall i \in \mathbb{N}$ is of the form:
				\begin{equation}
					\mb{s}_{\bs{\rho}_0,i}(\mb{x}) = (\mb{s}_{\bs{\rho}_0,i}\circ Q_{\bs{\theta}_0})(\mb{x}) = E\{\mb{a}_{\bs{\rho}_0,i}(\tau)|Q_{\bs{\theta}_0}({\bf x})\}.
				\end{equation}
				Moreover, in \eqref{E_appo} and \eqref{EE_appo}, we shown that $\e{\mb{s}_{\bs{\rho}_0,i}(\mb{x})}=0$ and $\ev{Q_{\bs{\theta}_0}({\bf x})\mb{s}_{\bs{\rho}_0,i}(\mb{x})}=0$. Consequently, $\mb{c}_i^T\mb{s}_{\bs{\rho}_0,i} \in \mathcal{T}^c_{\bar{F}_0}, \forall i \in \mathbb{N}$ and then $\mathcal{T}^c_{i,\bs{\rho}_0} \subseteq \mathcal{T}^c_{\bar{F}_0},\forall i \in \mathbb{N}$. 
				
				\item Part \textit{ii}): there exists $\tilde{i} \in \mathbb{N}$ such that $\mathcal{T}^c_{\bar{F}_0} \subseteq \mathcal{T}^c_{\tilde{i},\bs{\rho}_0}$. 
				
				Let us suppose that such parametric sub-model is the one define in \eqref{sub_par_tilde}, whose (parametric) tangent space $\mathcal{T}^c_{\tilde{i},\rho_0}$ is the one in \eqref{set_V}.
				
				Now, let us take $h\in L_2(P_0)$ such that:
				\begin{itemize}
					\item[(a)] $h \perp \mathcal{T}^c_{\tilde{i},\rho_0}$, that is $\e{h(\mb{x}) v(\mb{x})} = 0,\;\forall v \in \mathcal{T}^c_{\tilde{i},\rho_0}$ in \eqref{Tc_G0_1},
					\item[(b)] $h(\mb{x}) = (w \circ Q_{\bs{\theta}_0})({\bf x}) = w(Q_{\bs{\theta}_0}({\bf x}))$, for some measurable function $w$,
					\item[(c)] Suppose that $(\tau,\mb{x})$ is distributed so that $\tau \sim \bar{F}_0$ and $\mb{x}|\tau=t$ as density $p(\cdot|\bs{\theta}_0,t)$. Then we assume that $\e{h(\mb{x})} = 0$ and $\e{\tau h(\mb{x})}= \int t h(\bs{\alpha})  p(\bs{\alpha}|\bs{\theta}_0,t)d\bs{\alpha}d\bar{F}_0(t) = 0$.
				\end{itemize}
				Note that, a function $h\in L_2(P_0)$ satisfying (a), (b) and (c) is clearly in $\mathcal{T}^c_{\bar{F}_0}$ in \eqref{Tc_G0_1}.  
				
				From the definition of $\mathcal{T}^c_{\tilde{i},\rho_0}$ in \eqref{set_V} and Condition (a), we have that:
				\begin{equation}
					\ev{h(\mb{x}) E\{b(\tau)|\mb{x}\}} = 0.
				\end{equation}
				for all $b(\tau) \in L_2(P_0)$ such that $\e{b(\tau)} = 0$ and $\e{\tau b(\tau)} = 0$. It is immediate to verify that such $b(\tau)$ belongs to a subspace $\mathcal{B} \subseteq L_2(P_0)$ given by the orthogonal complement of $\mathrm{Span\{1,\tau\}}$ in $L_2(P_0)$, that is $\mathcal{B} = \mathrm{Span\{1,\tau\}}^\perp$. 
				
				Then, from e.g.\ \cite[Th. 23.3.c]{Jacod}, we have:
				\begin{equation}\label{cond1}
					E\{h(\mb{x}) b(\tau)\} = 0. %= \int h(\bs{\alpha}) b(t) p(\bs{\alpha}|\bs{\theta}_0,t)\bar{f}_{0}(t)d\bs{\alpha}dt.
				\end{equation}
				
				We can note now that the space $L_2(P_0)$ can be written as the orthogonal direct sum:
				\begin{equation}
				L_2(P_0) = \mathcal{B} \oplus \text{span}\{1, \tau\}.
				\end{equation}
				Then, every $l(\tau) \in L_2(P_0)$ can be uniquely decomposed as:
				\begin{equation}
				l(\tau) = b(\tau) + c_1 \cdot 1 + c_2 \tau,
				\end{equation}
				where $b(\tau) \in \mathcal{B}$ and $c_1, c_2 \in \mathbb{R}$. Moreover, the coefficients are uniquely determined by the orthogonality conditions:
				\begin{equation}
					c_1 = \frac{\e{l(\tau)} \e{\tau^2} - \e{\tau l(\tau)} \e{\tau}}{\e{\tau^2} - (\e{\tau})^2},
				\end{equation}
				\begin{equation}
					c_2 = \frac{\e{\tau l(\tau)} - \e{l(\tau)} \e{\tau}}{\e{\tau^2} - (\e{\tau})^2}.
				\end{equation}
				
				This decomposition, together with the Condition (c), allows to rewrite \eqref{cond1} as:
				\begin{equation}\label{cond2}
					E\{h(\mb{x}) l(\tau)\} = 0,\; \forall  l(\tau) \in L_2(P_0).
				\end{equation}
				Finally, from \cite[Lemma 23.1]{Jacod}, we can conclude that:
				\begin{equation}
					E\{h(\mb{x}) | \tau\} = 0,\; F_0-\text{a.s.}
				\end{equation}
				Finally, from Condition (b) and since, from Lemma \ref{lemma_suf_comp}, we know that $Q_{\bs{\theta}_0}({\bf x})$ is $F_0$-\textit{strongly complete}, we can conclude by saying that
				\begin{equation}
					h(\mb{x}) = w(Q_{\bs{\theta}_0}({\bf x})) =0,\; P_0-\text{a.s.} 
				\end{equation}
				
				We proved that the only function $h\in L_2(P_0)$ that is, at the same time, in $\mathcal{T}^c_{\bar{F}_0}$ and orthogonal to $\mathcal{T}^c_{\tilde{i},\rho_0}$ is the sole null function $h = 0$ ($\mathcal{P}_{t}-\text{a.s.}$). This implies that $\mathcal{T}^c_{\bar{F}_0} \subseteq \mathcal{T}^c_{\tilde{i},\bs{\rho}_0}$ and consequently, the point \textit{ii)} is verified.
			\end{itemize}	
			
			We conclude the proof by noticing that the explicit expression of the projection operator $\Pi(h|\mathcal{T}^c_{\bar{F}_0})$ given in \eqref{proj_cg} follows exactly from the same proof provided for the RES case in \cite[Appendix A.8]{For_Delmas_Ollila_TIT}.
\end{proof}
%
%%%%%%%%%%%%%%%%%%%%%%%%%%%%%%%%%%%
%%%%%%%%%%%%%%%%%%%%%%%%%%%%%%%%%%%%%%%
%\bibliographystyle{IEEEtran}
%\bibliography{ref_semipar_eff_estim}

%%%%%%%%%%%%%%%%%%%%%%%%%%